\def\a{\alpha}
\def\d{\delta}
\def\k{\kappa}
\def\l{\lambda}
\def\vphi{\varphi}
\def\s{\sigma}
\def\t{\tau}
\def\e{\varepsilon}
\def\Om{\Omega}
\newcommand{\cB}{{\mathcal B}}
\newcommand{\cF}{{\mathcal F}}
\newcommand{\cG}{{\mathcal G}}
\newcommand{\cK}{{\mathcal K}}
\newcommand{\cJ}{{\mathcal J}}
\newcommand{\cU}{{\mathcal U}}
\newcommand{\fF}{{\mathfrak F}}
\newcommand{\R}{{\mathbb R}}
\newcommand{\N}{\mathbb{N}}
\newcommand{\E}{{\mathbb E}}
\newcommand{\ov}{\overline}
\newcommand{\weakly}{\ensuremath{\rightharpoonup}}
\newcommand{\weaklys}{\stackrel{\star}{\rightharpoonup}}
\newcommand{\da}{\ensuremath{\downarrow}}
\newcommand{\mres}{\mathbin{\vrule height 1.6ex depth 0pt width
0.13ex\vrule height 0.13ex depth 0pt width 1.3ex}}
\newcommand{\sB}{\mathscr{B}}
\newcommand{\sL}{\mathscr{L}}
\newcommand{\sP}{{\mathscr P}}
\newcommand{\sM}{{\mathscr M}}
\newcommand{\ds}{\displaystyle}
\newcommand{\spt}{{\rm{spt}}}
\newcommand{\Leb}{{\rm{Leb}}}
\newcommand{\diver}{\nabla\cdot}
\newcommand{\iif}{{\rm{if}}}
\newcommand{\iin}{{\rm{in}}}
\newcommand{\ac}{{\rm{ac}}}
\renewcommand{\ae}{{\rm{a.e.}}}
\newcommand{\dd}{\hspace{0.7pt}{\rm d}}
\newcommand{\id}{{\rm id}}
\renewcommand{\E}{{\textbf{E}}}
\newcommand{\tE}{\widetilde{\textbf E}}
\renewcommand{\v}{{\textbf{v}}}
\newcommand{\bW}{{\textbf{W}}}
\newcommand{\bm}{{\boldsymbol \mu}}
\newcommand{\bn}{{\boldsymbol\nu}}
\newcommand{\br}{{\boldsymbol{\rho}}}
\newcommand{\be}{\begin{equation}}
\newcommand{\ee}{\end{equation}}
\newcommand{\ba}{\begin{array}}
\newcommand{\ea}{\end{array}}
\newtheorem{remark}{\textbf{Remark}}[section]
\newtheorem{theorem}{\textbf{Theorem}}[section]
\newtheorem{lemma}[theorem]{\textbf{Lemma}}
\newtheorem{corollary}[theorem]{\textbf{Corollary}}
\newtheorem{proposition}[theorem]{\textbf{Proposition}}
\newtheorem{definition}[remark]{\textbf{Definition}}
\providecommand{\customgenericname}{}
\newcommand{\newcustomtheorem}[2]{%
  \newenvironment{#1}[1]
  {%
   \renewcommand\customgenericname{#2}%
   \renewcommand\theinnercustomgeneric{##1}%
   \innercustomgeneric
  }
  {\endinnercustomgeneric}
}
\numberwithin{equation}{section}
\title[]{On nonlinear cross-diffusion systems: an optimal transport approach}  
\author[I. Kim]{Inwon Kim} 
\address{Department of Mathematics, UCLA, California, USA}
\email{ikim@math.ucla.edu}
\author[A.R. M\'esz\'aros]{Alp\'ar Rich\'ard M\'esz\'aros}  
\date{\today}
\address{Department of Mathematics, UCLA, California, USA}
\email{alpar@math.ucla.edu} 
\thanks{Inwon Kim is supported by the NSF grant DMS-1566578}
\begin{document}
\maketitle
\begin{abstract}
We study a nonlinear, degenerate cross-diffusion model which involves two densities with two different drift velocities.  A general framework is introduced based on its gradient flow structure in Wasserstein space to derive a notion of discrete-time solutions. Its continuum limit, due to the possible mixing of the densities, only solves a weaker version of the original system. In one space dimension, we find a stable initial configuration which allows the densities to be segregated. This leads to the evolution of a stable interface  between the two densities, and to a stronger convergence result to the continuum limit. In particular derivation of a standard weak solution to the system is available.  We also study the incompressible limit of the system, which addresses transport under a height constraint on the total density. In one space dimension we show that the problem leads to a two-phase Hele-Shaw type flow.
\end{abstract}

\section{Introduction}

Let $\Om$ be a bounded domain in $\R^d$ with $C^1$ boundary, and let  $T>0$ and $m>1$ be given constants. In this paper we study a gradient flow formulation of the following system in $[0,T]\times \Om$:
\be\label{eq:PME_m}
\left\{
\ba{l}
\partial_t\rho^1-\diver\left((\nabla p+\nabla\Phi_1)\rho^1\right)=0;\\ [5pt]
\partial_t\rho^2-\diver\left((\nabla p+\nabla\Phi_2)\rho^2\right)=0,  \\ [5pt]
\ea
\right.\tag{PME$_m$}
\ee
 where $\Phi_1,\Phi_2: \Om\to \R$ are given and the common diffusion term is generated by the pressure variable
\be\label{pressure_m}
p:=\frac{m}{m-1}(\rho^1+\rho^2)^{m-1}.
\ee
 In this article the system is subject to no flux condition on $[0,T]\times \partial\Om$ and is equipped with initial nonnegative densities $\rho^1_0,\rho^2_0\in L^1(\Om)$.



\medskip

Formally \eqref{eq:PME_m} can be seen as the gradient flow in Wasserstein (product) space of the free energy
\begin{equation}\label{free}
(\rho^1,\rho^2)\mapsto\int_{\Om} \frac{1}{m-1}(\rho^1+ \rho^2)^m\dd x + \int_\Om \Phi_1\rho^1\dd x + \int_\Om \Phi_2\rho^2\dd x.
\end{equation}

\medskip

Staying at the formal level, in the incompressible limit as $m\to+\infty$ where the first term in free energy turns into the constraint $\rho^1+\rho^2 \leq 1$, the corresponding system for the limiting density pair $(\rho^{1,\infty}, \rho^{2,\infty})$ is  
\be\label{eq:PME_infty}
\left\{
\ba{l}
\partial_t\rho^{1,\infty}-\diver\left((\nabla p^\infty+\nabla\Phi_1)\rho^{1,\infty}\right)=0;\\ [5pt]
\partial_t\rho^{2,\infty}-\diver\left((\nabla p^\infty+\nabla\Phi_2)\rho^{2,\infty}\right)=0,
\ea
\right.\tag{PME$_\infty$}
\ee
where the pressure $p^{\infty}$ is supported in the region $\{\rho^{1,\infty} + \rho^{2,\infty}=1\}$. When the densities $\rho^{i,\infty}$'s are characteristic functions with separate supports, the problem corresponds to a two-phase Hele-Shaw type flow with drifts.

\medskip

Our goal in this paper is to study the problems \eqref{eq:PME_m} and \eqref{eq:PME_infty} in the context of the aforementioned gradient flow, and verify the above heuristics. More precisely we will formulate the problem in terms of the discrete-time gradient flow (i.e. JKO or minimizing movement scheme) of the aforementioned free energy~\eqref{free}, posed in the product space equipped with the $2$-Wasserstein metric. Then we will study the solutions of this discrete scheme as the time step goes to zero. We will show that the limiting pair of densities $(\rho^1, \rho^2)$ satisfies a set of transport equation that will reduce to \eqref{eq:PME_m} under a stronger convergence assumption (for the precise statement we refer to Theorem~\ref{thm:precise}). To strengthen this result, it seems necessary to consider ``stable" initial configurations which avoids mixing: see the discussion below.  It turns out that in one dimension, in the setting of stable initial configurations which avoids mixing, a stronger convergence result holds and as a consequence the continuum limit densities satisfy \eqref{eq:PME_m} in the standard weak sense. Below is a  summary of our main results: precise statements are contained in the quoted theorems.


The main results of the paper are obtained in one space dimension. Here we assume that the density with stronger drift in $x$ direction sits on the right side on the $x$-axis, i.e.,
\begin{equation}\label{stable0}
-\partial_x \Phi_1 \geq -\partial_x\Phi_2\quad  \hbox{ and }\quad x_1\geq x_2 \hbox{ for  } x_i \in \{\rho_0^i>0\},i=1,2.
\end{equation}

Under the assumption ~\eqref{stable0} the following theorems hold:
\begin{theorem}[Segregation of solutions: Proposition~\ref{prop:separation_1D}, Theorem~\ref{thm:sep_limit}]\label{one_dimension_1}
For given $m\in (1,\infty]$ and $\tau>0$, let $(\rho^{1,\tau}, \rho^{2,\tau})$ be the time-discrete solutions given by the minimizing movement scheme with time step size $\tau>0$, as given in \eqref{gf:tau}. Then, the pair stays ordered for all times $t>0$, i.e. $\rho^{1,\t}_t$ is supported to the right of the support of $\rho^{2,\t}_t$ for all $t>0$. Moreover, as $\tau\da 0$ each density $\rho^{i,\tau}$ converges weakly in $L^{2m-1}([0,T]\times \Om)$, along a subsequence, to $\rho^{i,m}$ for $i=1,2$. Also, along a subsequence $\rho^{i,\tau}$ converges pointwise a.e. to $\rho^{i,m}$ for $i=1,2$.  The pair of limiting densities $(\rho^{1,m},\rho^{2,m})$ solves  \eqref{eq:PME_m} in the weak sense and it stays ordered for all times.
\end{theorem}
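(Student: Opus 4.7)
The plan has two independent stages: segregation at the discrete JKO level (which directly yields the ordering claim and passes to the limit by lower semicontinuity), and strong compactness of each density (which upgrades the weak-in-transport statement of Theorem~\ref{thm:precise} to a standard weak solution of \eqref{eq:PME_m}).

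\textbf{Segregation.} For fixed $\tau>0$, I would argue inductively on the JKO scheme. Suppose the pair $(\rho^{1,\tau}_n,\rho^{2,\tau}_n)$ is spatially ordered, and let $(\mu^1,\mu^2)$ be any competitor whose supports overlap. I construct an ordered rearrangement $(\tilde\mu^1,\tilde\mu^2)$ by sliding mass along the line so that $\tilde\mu^1$ sits to the right of $\tilde\mu^2$, preserving each marginal. In one dimension this rearrangement leaves the total density $\mu^1+\mu^2$, and hence the internal energy $\int\tfrac{1}{m-1}(\mu^1+\mu^2)^m\dd x$, unchanged; it strictly decreases the potential term $\int\Phi_1\mu^1+\Phi_2\mu^2\dd x$ thanks to $-\partial_x\Phi_1\geq-\partial_x\Phi_2$; and it does not increase the sum $W_2^2(\rho^{1,\tau}_n,\mu^1)+W_2^2(\rho^{2,\tau}_n,\mu^2)$, because the monotone-rearrangement form of 1D optimal transport together with the inductive ordering of $(\rho^{1,\tau}_n,\rho^{2,\tau}_n)$ favors the ordered configuration. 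Hence the JKO minimizer is itself ordered. Passing to $\tau\to 0$ preserves the spatial ordering of the supports by narrow convergence of measures.

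\textbf{Compactness and passage to the limit.} Standard JKO bookkeeping yields $W_2$-equicontinuity in time and a uniform bound on the free energy, in particular an $L^\infty_tL^m_x$ bound on the total density $\rho^\tau:=\rho^{1,\tau}+\rho^{2,\tau}$. Summing the two equations of \eqref{eq:PME_m} formally produces a scalar porous-medium-type equation for $\rho^\tau$ with a bounded drift, yielding the customary space-time $H^1$-type control on the pressure $p^\tau=\tfrac{m}{m-1}(\rho^\tau)^{m-1}$. An Aubin--Lions argument then gives strong $L^1$ precompactness of $\rho^\tau$, and a standard interpolation using the Sobolev embedding in one dimension upgrades its integrability to the weak $L^{2m-1}$ bound stated in the theorem. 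Segregation is used decisively at this stage to lift strong compactness from the sum to each component: $\rho^{i,\tau}(t,\cdot)$ is supported on a half-line-like set separated from the other's support by a single interface point $\gamma^\tau(t)$, and the $W_2$-equicontinuity together with the conservation of the partial masses $\int\rho^{i,\tau}\dd x$ forces $\gamma^\tau$ to be precompact in time. Combining these compactnesses yields strong $L^1$, hence a.e., convergence of each $\rho^{i,\tau}$. With strong a.e.\ convergence of $\rho^{i,\tau}$ and weak convergence of $\nabla p^\tau$, one passes to the limit in the discrete Euler--Lagrange continuity equation of each JKO step to recover \eqref{eq:PME_m} in the standard weak sense. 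Ordering of the limit is inherited immediately from the ordering at each $\tau$ together with the pointwise convergence.

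\textbf{Main obstacle.} The critical point is to upgrade strong compactness of the sum $\rho^\tau$ to strong compactness of each species $\rho^{i,\tau}$. In dimensions $d\geq 2$ this fails in general, which is precisely the mechanism behind the weaker notion of solutions captured by Theorem~\ref{thm:precise}. In one dimension segregation provides a clean interface, but making the lifting rigorous requires a quantitative stability estimate on the interface $\gamma^\tau$, and a careful argument ruling out concentration of the product $p^\tau\nabla\rho^{i,\tau}$ at the free boundary where both factors are only borderline regular. I expect this interface analysis to carry most of the technical weight of the proof.
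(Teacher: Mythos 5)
Your high-level strategy matches the paper's, but both of the key steps are executed differently in the paper, and one of your steps has a real gap.

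\textbf{Segregation.} You propose a global rearrangement: given any competitor $(\mu^1,\mu^2)$, construct an ordered $(\tilde\mu^1,\tilde\mu^2)$ with the same sum and with $\tilde\mu^2$ to the left of $\tilde\mu^1$, and assert that this does not increase $W_2^2(\rho^{1,\tau}_n,\cdot)+W_2^2(\rho^{2,\tau}_n,\cdot)$. That assertion is precisely the nontrivial content; it is a Hardy--Littlewood--P\'olya type rearrangement inequality in the Wasserstein metric and you cannot take it for granted, even in 1D. The paper (Proposition~\ref{prop:separation_1D}) proves the ordering by a much more local and elementary construction: it assumes the minimizer violates the ordering, extracts (via Lebesgue points) two sets $E^1\subseteq\{\rho^1>0\}$, $E^2=E^1+\theta\subseteq\{\rho^2>0\}$ with $\sL^1(E^1)=\sL^1(E^2)>0$, on which each density is bounded below by $\delta>0$, swaps a mass $\delta$ between $E^1$ and $E^2$, and computes explicitly (using nearly-optimal transport maps built from the existing optimal maps by translation) that this strictly decreases the JKO objective. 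The local swap makes the Wasserstein change computable and the ordering of the initial data enters through the sign of $T^2(x+\theta)-T^1(x)$. Your global rearrangement could in principle be realized as an iteration of such swaps, so the paper's lemma is the missing ingredient underlying your one-line claim.

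\textbf{Compactness of each species.} You anticipate needing quantitative stability of the interface $\gamma^\tau(t)$ and worry about concentration of $p^\tau\,\partial_x\rho^{i,\tau}$ at the free boundary. The paper does not need either. Once Theorem~\ref{thm:sep_limit} establishes that the \emph{limit} densities $(\rho^1,\rho^2)$ are segregated (and ordered) for every $t$ --- which it does with only a soft $W_2$-compactness argument on the interface points $I^{i,\tau}(t)$, not a quantitative estimate --- the upgrade to strong convergence of each $\tilde\rho^{i,\tau_n}$ in $L^{2m-1}$ is an elegant decomposition (Theorem~\ref{thm:separation_limit_1D}):
\begin{equation*}
\|\tilde\rho^{i,\tau_n}-\rho^i\|_{L^1}
= \int_{\{\rho^i>0\}}\bigl|(\tilde\rho^{1,\tau_n}+\tilde\rho^{2,\tau_n})-(\rho^1+\rho^2)\bigr|\,\dd t\otimes\dd x
+ \int_{([0,T]\times\Om)\setminus\{\rho^i>0\}}\tilde\rho^{i,\tau_n}\,\dd t\otimes\dd x,
\end{equation*}
and the first term vanishes by strong convergence of the sum (Proposition~\ref{prop:strong_comp}) while the second vanishes by weak $L^{2m-1}$ convergence of $\tilde\rho^{i,\tau_n}$ to $\rho^i$ tested against the indicator of $\{\rho^i>0\}^c$. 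No interface stability, no free-boundary concentration argument: segregation of the limit lets one split the $L^1$ norm and dispatch each piece with a soft convergence. Finally, passing to the limit in the momentum is handled in Remark~\ref{rmk:conv}(2) via the factorization $\tilde\rho^{i,\tau_n}\,\partial_x(\tilde\rho^{1,\tau_n}+\tilde\rho^{2,\tau_n})^{m-1}=\tfrac{m-1}{m-1/2}\partial_x(\tilde\rho^{1,\tau_n}+\tilde\rho^{2,\tau_n})^{m-1/2}\cdot \tilde\rho^{i,\tau_n}/(\tilde\rho^{1,\tau_n}+\tilde\rho^{2,\tau_n})^{1/2}$ together with the domination $\tilde\rho^{i,\tau_n}/(\tilde\rho^{1,\tau_n}+\tilde\rho^{2,\tau_n})^{1/2}\le(\tilde\rho^{i,\tau_n})^{1/2}$, not a pointwise estimate at the free boundary.

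In short: your plan is morally right, but (i) the Wasserstein rearrangement inequality you invoke for segregation must be proved, and the paper's local-swap Proposition~\ref{prop:separation_1D} is what does that work; and (ii) the interface stability you flag as the technical bottleneck is in fact unnecessary --- the paper's decomposition trick in Theorem~\ref{thm:separation_limit_1D} gives strong compactness of each species from strong compactness of the sum plus segregation of the limit.
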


\begin{theorem}[Convergence of weak solutions as $m\to\infty$: Theorem~\ref{thm:limit_m}]\label{one_dimension_2}
Let $(\rho^{1,m}, \rho^{2,m})$ be as given above for given $m>1$. Then as $m\to\infty$ and along a subsequence, the density pairs converge weakly in $L^p([0,T]\times \Om)$ for any $1<p<\infty$ to $(\rho^{1,\infty}, \rho^{2,\infty})$, which is a weak solution of \eqref{eq:PME_infty}.
\end{theorem}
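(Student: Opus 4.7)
The plan is to upgrade the family of weak solutions $(\rho^{1,m},\rho^{2,m})$ produced by Theorem \ref{one_dimension_1} by deriving $m$-uniform a priori bounds, extracting weakly convergent subsequences, and passing to the limit in the distributional formulation of \eqref{eq:PME_m}. Because we are in one space dimension and \eqref{stable0} is preserved by the flow, the supports of the two densities remain disjoint intervals at every time, a structural feature that will be crucial both for compactness and for the identification of the limit. Setting $\rho^m:=\rho^{1,m}+\rho^{2,m}$ and summing the two equations gives a scalar porous-medium-type equation with drift for $\rho^m$; a comparison argument against explicit barriers, together with the initial bound $\rho_0^1+\rho_0^2\leq 1$ and Lipschitz continuity of the $\Phi_i$, should yield $\|\rho^m\|_{L^\infty}\leq 1+o(1)$ as $m\to\infty$, hence a uniform $L^\infty$ bound on the pressure $p^m=\tfrac{m}{m-1}(\rho^m)^{m-1}$. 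The energy dissipation along the gradient flow, transferred from the JKO level to the continuum via Theorem \ref{one_dimension_1}, further gives
\begin{equation*}
\sum_{i=1}^{2}\int_0^T\!\!\int_\Om \rho^{i,m}\,|\partial_x p^m+\partial_x\Phi_i|^2\,\dd x\,\dd t\leq C,
\end{equation*}
and an Aronson--B\'enilan-type semiconvexity estimate on the sum equation provides $L^\infty([0,T];\mathrm{BV}(\Om))$ control of $p^m$ independent of $m$.

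Next, the BV bound on $p^m$ combined with the PDE for $\rho^m$ and an Aubin--Lions argument yields strong $L^1$ compactness of $\rho^m$. Thanks to \eqref{stable0} one may write $\rho^{1,m}(t,\cdot)=\rho^m(t,\cdot)\one_{\{x\geq I^m(t)\}}$ where $I^m(t)$ is the separating interface; BV-in-time control of $I^m(\cdot)$, again derived from the energy dissipation, provides compactness of the interfaces and hence strong $L^1$ convergence of each $\rho^{i,m}$ to some $\rho^{i,\infty}$. Interpolation with the $L^\infty$ bound upgrades this to weak convergence in every $L^p$ with $1<p<\infty$. Extracting also $p^m\rightharpoonup p^\infty$ weakly in $L^2$, I can pass to the limit in the distributional form of each equation in \eqref{eq:PME_m}: the strong convergence of $\rho^{i,m}$ combined with the weak convergence of $\partial_x p^m$ handles the nonlinear flux $\rho^{i,m}(\partial_x p^m+\partial_x\Phi_i)$.

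Finally, to conclude that the limit solves \eqref{eq:PME_infty}, I verify the complementarity relation $p^\infty\,(1-\rho^{1,\infty}-\rho^{2,\infty})=0$ almost everywhere. The identity $(\rho^m)^{m-1}=\tfrac{m-1}{m}p^m$ together with the uniform bound on $p^m$ forces $p^m\to 0$ pointwise on any set where $\rho^m\leq 1-\delta$ uniformly in $m$; combined with the pointwise a.e.\ convergence of $\rho^m$ (along a further subsequence extracted from the $L^1$ convergence) and Egorov's theorem, this delivers the desired relation. The main technical obstacle I anticipate is securing the $L^\infty(\mathrm{BV})$ or comparable semiconvexity control of $p^m$ uniformly in $m$: this is what unlocks the strong compactness of the densities and permits passage to the limit in the nonlinear flux, and it is precisely here that the one-dimensional geometry together with the stability assumption \eqref{stable0} is genuinely exploited.
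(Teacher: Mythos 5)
Your plan hinges on two estimates that you flag yourself as unproven, and these are precisely the places where the paper's proof takes a fundamentally different route, so I would classify this as a genuine gap rather than an alternative proof.

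First, you propose to get $L^\infty([0,T];\mathrm{BV}(\Om))$ control of $p^m$ \emph{uniformly in $m$} via an Aronson--B\'enilan-type semiconvexity estimate, and then to use this (via Aubin--Lions) to obtain strong $L^1$ compactness of $\rho^m$ and of each $\rho^{i,m}$, so that the nonlinear flux $\rho^{i,m}\partial_x p^m$ passes to the limit by a strong-weak pairing. The paper never establishes strong compactness of the densities in the $m\to\infty$ limit, and it does not use Aronson--B\'enilan at all. Deriving a uniform-in-$m$ semiconvexity estimate is delicate for this cross-diffusion system: the solutions are produced by a JKO scheme and are not known to be smooth enough to run the classical AB argument, and the drift terms $\nabla\Phi_i$ with $\Phi_1\neq\Phi_2$ do not straightforwardly respect the algebraic cancellations that AB relies on. Similarly, your ``BV-in-time control of the interface $I^m(\cdot)$ from energy dissipation'' is asserted but not proved; the energy dissipation bound controls $W_2$-speed of the densities, which does not directly give $\mathrm{BV}$ control of a free boundary.

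What the paper does instead is strictly weaker in terms of compactness but sufficient. It obtains $L^1_t C^{0,\alpha}_x$ bounds on $(\rho^{1,m}+\rho^{2,m})^m$ (Lemma~\ref{lem:m_limit}) from the $L^r$ bound on the flux and a Poincar\'e--Wirtinger/oscillation argument, not from AB. It then decomposes $[s,t]\times\Om$ into $\cJ^-\cup\mathcal{I}\cup\cJ^+$, where $\mathcal{I}$ is the closed set of all subsequential limits of the interfaces $I^{2,m}(\cdot)$, and shows directly (Claim~1) via a $W_2$-Cauchy argument that the contribution of $\mathcal{I}$ to $\int \E^{2,m}\cdot\partial_x\phi$ is negligible; on $\cJ^-$ the pressure reduces to $(\rho^{2,m})^m$, which passes to the limit after two integrations by parts and uses only weak convergence. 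The complementarity $p^\infty(1-\rho^{1,\infty}-\rho^{2,\infty})=0$ is obtained (Claim~2) by directly estimating $\int p_m(1-\rho^{2,m})$ with a three-zone Chebyshev/H\"older split, again bypassing strong compactness, before invoking \cite{MauRouSan1}. Your Egorov-based route to the complementarity relation presupposes pointwise a.e.\ convergence of $\rho^m$, which in turn rests on the unproven strong $L^1$ compactness. Unless you can close the semiconvexity/BV step, the proposal does not yield a proof; if you want to stay closer to the paper's level of difficulty, replace your compactness strategy with the interface-decomposition and momentum-identification argument, which only uses the weak limits.
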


\begin{theorem}[Patch solutions for \eqref{eq:PME_infty}: Proposition~\ref{prop:patch}]\label{one_dimension_3}
Let $m=\infty$ and suppose, in addition to ~\eqref{stable0}, that $\partial_{xx}^2\Phi_i \geq 0$ and $\rho_0^i = \chi_{(a^i(0), b^i(0))}$. Then $\rho^{i,\infty}$ remains a patch for all $t>0$, i.e. $\rho^{i,\infty}_t = \chi_{(a^i(t), b^i(t))}$, $i=1,2$. The density pair in this case is a solution to a two-phase Hele-Shaw type flow with drifts.
\end{theorem}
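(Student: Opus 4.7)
The plan is to work directly with the discrete JKO scheme at level $m=\infty$ and show that, when the two densities start as patches and the potentials are convex, the minimizer at each time step is again a pair of patches; the patch structure then passes to the continuum limit obtained in Theorem~\ref{one_dimension_2}.

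The key observation is a reformulation of the JKO step via one-dimensional optimal transport. For each $i=1,2$, let $G^i_k:[0,M^i]\to\R$ denote the quantile function of the minimizer $\rho^{i,k}_\tau$, where $M^i$ is the conserved mass. The pointwise bound $\rho^i\leq 1$ translates into $(G^i_k)'\geq 1$, the segregation of Proposition~\ref{prop:separation_1D} reads $G^2_k(M^2)\leq G^1_k(0)$, and for patch initial data $\rho^{i,k-1}_\tau=\chi_{(a^{i,k-1},b^{i,k-1})}$ the previous quantile is $G^{i,k-1}(s)=a^{i,k-1}+s$. The JKO functional becomes
$$\sum_{i=1}^2 \int_0^{M^i}\Big[\tfrac{1}{2\tau}(G^i_k(s)-G^{i,k-1}(s))^2+\Phi_i(G^i_k(s))\Big]\,ds,$$
to be minimized under the two inequality constraints above. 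If $(G^i_k)'>1$ on an open interval, the Euler--Lagrange equation there reads $\tau^{-1}(G^i_k(s)-G^{i,k-1}(s))+\Phi_i'(G^i_k(s))=\mathrm{const}$; differentiating and using $\Phi_i''\geq 0$ yields $(G^i_k)'=1/(1+\tau\Phi_i''(G^i_k))\leq 1$, contradicting the strict inequality. Hence $(G^i_k)'\equiv 1$, i.e., the new quantile is a translate of the old, and $\rho^{i,k}_\tau$ is itself a patch.

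With the patch form secured, the remaining minimization reduces to choosing the left endpoints $a^{i,k}$ subject to $a^{2,k}+M^2\leq a^{1,k}$. In the non-touching regime the first-order conditions decouple and each patch shifts by $-\tau$ times the average of $\Phi_i'$ over its support. In the touching regime $a^{2,k}+M^2=a^{1,k}=c_k$; adding the two first-order conditions eliminates the interface multiplier and yields a single update equation for $c_k$, while the multiplier itself encodes an interface pressure $P_{c_k}$. Sending $\tau\to 0$ via Theorem~\ref{one_dimension_2} produces $\rho^{i,\infty}_t=\chi_{(a^i(t),b^i(t))}$ with boundary curves satisfying an ODE/free--boundary system, and the pressure $p^\infty$, piecewise determined by $\partial_{xx}^2 p^\infty=-\partial_{xx}^2\Phi_i$ on each patch with $p^\infty=0$ on the outer boundaries and continuity at the interface, exhibits the limit as a two-phase Hele-Shaw type flow with drifts.

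The main obstacle is the transition between the separated and touching regimes: when the patches are a positive distance apart at step $k-1$ but the proposed independent translations would overlap them, one must argue that the constrained JKO minimizer instead places them tangentially, and verify that the resulting one-sided optimality is consistent in the limit $\tau\to 0$ with the interface law of the two-phase flow. A companion point is to ensure convexity rigorously forces $(G^i_k)'\equiv 1$ rather than merely $(G^i_k)'\leq 1$ almost everywhere, which requires some care at points where $\Phi_i''$ vanishes.
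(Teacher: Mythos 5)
Your strategy is genuinely different from the paper's. The paper does \emph{not} analyze the discrete JKO step: it works directly at the continuum level, extending the limiting pressure to a $C^{1,1}$ function $\tilde p$, mollifying the velocity $v=\partial_x\tilde p+\partial_x\Phi_i$, pushing $\chi_{A^i_0}$ along the smooth characteristics of $v^\e$, using compressivity $\partial_x v^\e\ge 0$ to control the measure of $A^i_t$, and finally combining this with the weak formulation and $\rho^{i,\infty}\le 1$ to force $\rho^{i,\infty}_t=\chi_{A^i_0(t)}$. Your quantile-based reformulation of the 1D JKO step is a clean alternative and, if carried out, would give extra information (explicit update rules for the endpoints and the interface multiplier), so it is worth pursuing.

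There is, however, a genuine gap that your own worries do not touch. Showing $(G^i_k)'=1$ a.e.\ on the set where the constraint is inactive (which your Euler--Lagrange/convexity argument does) only says that the density takes values in $\{0,1\}$: a nondecreasing $G$ with $G'\ge 1$ a.e.\ and $G'=1$ a.e.\ may still have jump discontinuities, and a jump of $G$ corresponds to a gap in $\supp\rho^i$. In other words, your argument yields $\rho^{i,k}_\t=\chi_B$ for a Borel set $B$ but does not show $B$ is an interval, which is the whole content of ``patch''. This can be repaired inside your framework: if $G$ jumps at $s_*$, perturb by shifting the block $(s_*,s_*+\d)$ left and $(s_*-\d,s_*)$ right, keeping admissibility because the jump provides room; optimality forces $\fint_{s_*-\d}^{s_*}\L\ge 0\ge\fint_{s_*}^{s_*+\d}\L$ where $\L(s)=\t^{-1}(G(s)-G_0(s))+\Phi_i'(G(s))$, while $\L$ jumps strictly \emph{up} at $s_*$ (since $y\mapsto\t^{-1}y+\Phi_i'(y)$ is strictly increasing and $G_0$ is continuous), a contradiction. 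You should include this step. A second, smaller issue: making the Euler--Lagrange equation rigorous on $\{(G^i_k)'>1\}$ requires care because this set is merely measurable, not open, so smooth compactly supported variations are not directly available; one has to argue via the variational inequality on the convex constraint set (your concern about $\Phi_i''=0$ is in fact harmless, since equality $(G^i_k)'=1$ there causes no contradiction). Finally, for the continuum limit $\t\da 0$ at $m=\infty$ you should invoke Theorem~\ref{thm:existence_PME_inf} and the uniform-in-time $W_2$ convergence from Proposition~\ref{prop:conv_unif-measure}, not Theorem~\ref{one_dimension_2}, which concerns $m\to\infty$; uniform $W_2$ convergence of a sequence of unit-density patches with bounded endpoints does identify the limit as a patch, but this needs to be said.
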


\begin{remark} 
For the linear diffusion $m=1$ the logarithmic entropy $\int \rho\ln\rho \dd x $ replaces $\frac{1}{m-1}\int \rho^m\dd x$ in the free energy, resulting in slightly different, however mostly parallel, analysis. Let us point out that in this case the sum of the densities is always positive, and thus they will always form an interface between each other in the event of segregation.\\
\end{remark}



\medskip
Let us discuss now the existing results from the literature that are relevant to our work. 
The single density version of the system \eqref{eq:PME_infty} has been introduced in \cite{MauRouSan1} in the gradient flow setting, the free boundary characterization and its links to \eqref{eq:PME_m} has been studied in \cite{AleKimYao}. These and similar systems received a lot of attention in the past a few years (see for instance \cite{MauRouSan2,MauRouSanVen, MauVen} and the references therein). These models are strongly related to the so-called Hele-Shaw models, as we can see in \cite{AleKimYao} (for other references we direct the reader to \cite{Kim,KimPoz,MPQ,PerQuiVaz} and to the references therein).

\medskip

Cross diffusion systems arise naturally from mathematical biology. 
These appear either as systems of reaction-diffusion equations (as in \cite{LorLorPer, JunZam} for instance) or systems of advection-diffusion equations (as in \cite{BerGurHil, BerGurHilPel, DamMeuMauRou, ZamJun} for instance). These systems appear also in fluid mechanics, such as the thin film approximation of the Muskat problem studied in \cite{EscLauMat,LauMat}. Beside the PDE approach for degenerate parabolic systems (used in most of the above references),  more recently the optimal transport and gradient flow theories have been adopted to study these systems.  For a non-exhaustive list of the fast-growing references in this direction we refer to \cite{BerBurPie, BurDiFPieSch, BurDiFFagSte, BruMurRanWol, CanGalMon1, CanGalMon2, CarLab1, CarLab2, DamMeuMauRou, DiFFag, LabPhD, Lab, LauMat}.

\medskip

Most of the aforementioned papers concern systems including separate diffusion terms for the evolution of each densities.  Such feature enables tracking of separate densities in the evolution. This is in contrast to our case where recovering separate densities out of the dynamic system appears to be out of reach unless the densities are guaranteed to be segregated. There are very few results available for systems without separate diffusions, we mention here a few particular papers in this direction.  In \cite{BerGurHilPel} the authors study the well-posedness of the system \eqref{eq:PME_m} in one space dimension when $m=2$, $\Phi_1=\Phi_2\equiv 0$ within the class of segregated solutions. This is a particular case of our model where the total density satisfies a degenerate parabolic equation, from which one can construct a segregated solution. In \cite{Ott1} Otto studies in one dimension the case  $m=\infty$ with gravity potentials $\Phi_i(x) = C_i x$ and with full saturation, i.e. with the condition $\rho_1+\rho_2=1$. There the mixing profile of one density correponds to an entropy solution of a Burger's type equation. This interesting description of mixing phenomena remains open to be extended beyond the specific setting given in the paper.
Lastly in the recent paper \cite{BurDiFFagSte} the authors study existence and segregation properties of one dimensional stationary solutions for systems of similar form to ours, when $m=2$ and the drift is generated by interaction energies. 

\bigskip

{\it Main difficulties and ingredients}

\medskip

\medskip

As mentioned above, our main challenge lies in the fact that the densities may mix into each other during the evolution, which indeed happens with the ``unstable" initial configurations where the densities are initially positioned in the opposite order to the equilibrium solution (see the discussion in Section~\ref{sec:eq_sol}). Such situation indicates low regularity of each densities, hindering the system from being well-posed. Indeed, in general we are only able to obtain strong convergence on the sum of the two density variables in the continuum limit, as we will see in Theorem \ref{thm:precise}. Naturally this reasoning leads to the question of whether one can formulate a ``stable" initial configuration to  obtain a stronger result. This question, while under investigation by the authors, stands open beyond the one dimensional result for segregated solutions, Theorems~\ref{one_dimension_1}, \ref{one_dimension_2},  \ref{one_dimension_3}. 

\medskip

\medskip

In terms of gradient flows, the challenge lies in the lack of available estimates or convexity properties. Though not surprising in the context of the above discussion, this is an interesting contrast to the single species case, where for instance stability of the discrete gradient flow solutions based on $\lambda$-convexity properties played an important role in the analysis.  For us the higher order space regularity estimates in the JKO scheme are available only for functions of the sum of the two densities. For similar models considered in \cite{LabPhD} and \cite{LauMat} this difficulty was overruled by presence of separate diffusions, or ``separate entropies'' of the form $\e(\int f(\rho^1)+g(\rho^2)\dd x)$ in the free energy \eqref{free}, however estimates obtained here do not carry through as $\e\da 0$. Let us also mention that the flow interchange technique introduced in \cite{MatMcCSav},  which has been quite successful to analyze some non-convex gradient flow systems such as in \cite{DiFMat} or \cite{LauMat}, does not appear to be applicable to our system.  Thus here we derive all our estimates relying  only on the first order optimality conditions satisfied by the discrete in time minimizers in the JKO scheme (see for instance the proof of Theorem~\ref{thm:reg_1step}).  This procedure is rather natural yet appears to be unexploited in the literature for similar models.



\medskip

{\it Structure of the paper}

\medskip

In Section \ref{sec:MM_m}  the discrete-time scheme for the gradient flow is introduced, set in the $W_2$-product space. In Section \ref{subsec:prop_min}  the properties of discrete-time minimizers are studied.  Here we observe that while the total density $\rho^1+\rho^2$ is relatively regular (Lipschitz continuous), separate densities may be segregated and discontinuous. The segregation of densities with respect to the ordering properties of their potentials are more obvious in Section \ref{sec:eq_sol}, where one discusses the equilibrium solutions. Such segregation and ordering property suggests that fingering and mixing is inevitable for densities starting from ``unstable" initial configurations, to position themselves into the stationary profile.  

\medskip

In Section \ref{sec:PDE_m} we analyze the continuum limit of discrete-time solutions by studying their convergence modes as the time step size is sent to zero. We show that the limit solution satisfies a system of transport equations which can be interpreted as a generalized solutions for the system \eqref{eq:PME_m}. We also introduce the standard notion of weak solution for our systems and show that the continuum limit satisfies this notion when pointwise convergence holds for separate densities. It remains an open question whether the densities indeed converge pointwise, i.e. whether we can track down the position of each density in the evolution of the problem in general framework or in general dimension. 

\medskip

Section \ref{sec:segregated_1D} is devoted to the analysis  in one space dimension, where we consider stable initial configurations that line up with the strength order of the drift potentials. In this setting we are able to guarantee that solutions stay segregated with an evolving interface between them. As a consequence it follows that pointwise convergence holds for each densities, which in turn yields the existence of weak solutions for the system \eqref{eq:PME_m}.  The continuum solutions of \eqref{eq:PME_m} are then shown to converge as $m$ tends to infinity to a weak solution of \eqref{eq:PME_infty} along a subsequence. Furthermore when the drift is compressive (or incompressible), we show that patch solutions appear, yielding a solution to the two-phase Hele-Shaw flow. 

\medskip

Finally, in the Appendices \ref{sec:appendix_ot} and \ref{sec:appendix_aubin-lions} we recall some results from the theory of optimal transport and a refined version of Aubin-Lions lemma respectively. 

\medskip
{\it Acknowledgements}

\medskip

The authors are thankful to G. Carlier, D. Matthes, F. Santambrogio and Y. Yao for many valuable discussions at different stages of the preparation of this paper. The authors warmly thank the referee for his/her constructive comments and remarks.

\section{Minimizing movement schemes and properties of the minimizers}\label{sec:MM_m}

\subsection{Setting and notations}
Let us introduce the setting of the problem and some notations. Let $\Om\subseteq\R^d$ be a bounded domain with smooth boundary. We denote by $\sP(\Om)$ the space of probability measures on $\Om$. For $M>0$ we denote by $\sP^M(\Om)$ the space of finite nonnegative Radon measures on $\Om$ ($\sM_+(\Om)$) with mass $M$. 

For a Borel measurable map $T:\Om\to\Om$ and $\mu,\nu\in\sP^M(\Om)$, we say that $T$ {\it pushes forward} $\mu$ {\it onto} $\nu$, and write $\nu=T_\#\mu$ if $\nu(B)=\mu(T^{-1}(B))$ for every $B\subseteq\Om$ Borel measurable set. Using test functions, the definition of pushforward translates to 
$$\int_{\Om}\phi(y)\dd\nu(y)=\int_{\Om}\phi(T(x))\dd\mu(x),\ \ \forall\phi:\Om\to\R, \ \ \text{bounded and measurable}.$$
We equip the space $\sP^M(\Om)$ with the well-known $2$-Wasserstein distance $W_{2,M}$, i.e. For $\mu,\nu\in \sP^M(\Om),$
$$
W_{2,M}^2(\mu,\nu):=
\min\left\{\int_{\Om\times\Om} |x-y|^2\,\dd\gamma\;:\;\gamma\in\Pi^M(\mu,\nu)\right\},
$$
where
$\Pi^M(\mu,\nu)$ is the set of the so-called {\it transport plans}, i.e. $\Pi^M(\mu,\nu):=\{\gamma\in\sP^{M^2}(\Om\times\Om):\,(\pi^x)_{\#}\gamma=\mu,\,(\pi^y)_{\#}\gamma=\nu\}$. In particular if $\mu\ll\sL^d\mres\Om$ then the previous problem has a unique solution, which is of the form $\gamma_{T}:=(\id,T)_\#\mu$.
Here, in particular we adjusted the usual distance defined on probability measures to measures having mass $M>0$. Since it shall be clear from the context, from now on we write $W_2$ instead of $W_{2,M}.$  On the forthcoming pages we shall use classical results from the optimal transport theory. All of these can be found for instance in \cite{OTAM, AmbGigSav, villani}.

We denote by $\sM^d(\Om)$ the space of finite vector-valued Radon measures on $\Om.$ If $\E\in\sM^d(\Om),$ we denote by $|\E|$ its variation.  We denote the subspaces of absolutely continuous measures (w.r.t. $\sL^d\mres\Om$) by $\sP^{\ac}(\Om), \sP^{\ac,M}(\Om),$ etc.; we always identify these absolutely continuous measures with their densities and write $\rho$ instead of $\rho\cdot\sL^d$ or $\rho\dd x.$  If $\rho\in\sM_+^{\ac}(\Om)$ and $c\ge 0$ by $\{\rho>c\}$ we mean the set (up to $\sL^d$-negligible sets) where $\rho(x)>c$ a.e. In particular a property holds a.e. in $\{\rho>0\}$ if and only if it holds $\rho-$a.e. Notice also that $\{\rho>0\}\subseteq_{\text{a.e.}}\spt(\rho).$ For a measurable set $B\subset\R^d,$ we denote the set of its Lebesgue point by $\Leb(B).$

\subsection{Minimizing movements}

The heart of our analysis is the well-known {\it minimizing movement} or {\it JKO} scheme (see for instance \cite{Amb, AmbGigSav, San,JKO}) on a product Wasserstein space.

Let us introduce the functionals.
We consider $\cF_m,\cF_\infty:\sP^{M_1}(\Om)\times\sP^{M_2}(\Om)\to\R\cup\{+\infty\}$ and $\cG:\sP^{M_1}(\Om)\times\sP^{M_2}(\Om)\to\R$ to be defined as
\be\label{def:F_m}
\cF_m(\br)=\left\{
\ba{ll}
\ds\int_\Om\frac{1}{m-1}(\rho^1(x)+\rho^2(x))^m\dd x, & \iif\ (\rho^1+\rho^2)^m\in L^1(\Om),\\ [12pt]
+\infty, & {\rm{otherwise}},
\ea
\right.
\ee
\be\label{def:F_infty}
\cF_\infty(\br)=\left\{
\ba{ll}
0, & \iif\ \|\rho^1+\rho^2\|_{L^\infty}\le 1,\\ [12pt]
+\infty, & {\rm{otherwise}},
\ea
\right.
\ee
and 
$\cG:\sP^{M_1}(\Om)\times\sP^{M_2}(\Om)\to\R$
\be\label{def:G}
\cG(\br)=\int_\Om\Phi_1(x)\dd\rho^1(x)+\int_\Om\Phi_2(x)\dd\rho^2(x),
\ee 
where $\br:=(\rho^1,\rho^2)$,  $m>1$ is fixed and $\Phi_1,\Phi_2:\Om\to\R$ are given continuous potentials. Notice that $\cF_\infty$ is the indicator function (in the sense of convex analysis) of the set
$$\cK_1:=\left\{ (\rho^1,\rho^2)\in\sP^{M_1,\ac}(\Om)\times\sP^{M_2,\ac}(\Om): \rho^1+\rho^2\le 1\ \ae\right\}.$$


It is classical that $\cF_m,\cF_\infty$ and $\cG$ are l.s.c. w.r.t. the weak convergence of measures on $\sP^{M_1}(\Om)\times\sP^{M_2}(\Om)$. It is immediate to see that they are convex, moreover $\cF_m$ is also strictly convex (in the usual sense) on $\sP^{M_1}(\Om)\times\sP^{M_2}(\Om)$. We remark also that 
in general $\cF_m$ is not {\it displacement convex} (in the sense of \cite{Mcc}) on the product space. To see this, let us consider for simplicity $m=2$. In this case, for $\rho^1,\rho^2\in L^2(\Om)$ we can write $\cF_2(\rho^1,\rho^2)=\int_\Om(\rho^1)^2\dd x + \int_\Om (\rho^2)^2\dd x+2\int_\Om \rho^1\rho^2\dd x.$ If $\cF_2$ would be $\l$-displacement convex (for some $\l\in\R$), then the map $\rho^1\mapsto\cF_2(\rho^1,\rho^2)$ would share at least the same modulus of convexity for any $\rho^2\in\sP^{M_2}(\Om)\cap L^2(\Om)$ fixed. While the first term in the development of $\cF_2$ is $0$-displacement convex and the second term is a constant for fixed $\rho^2$, the last term would be $\l$-displacement convex if and only if $\rho^2$ would be $\l$-convex, i.e. $D^2\rho^2\ge\l I_d$ in the sense of distributions. However,  $\rho^2$ can be chosen in a way that the lower bound on its Hessian is arbitrarily negative. Therefore, this term fails to be $\l$-displacement convex for any $\l\in\R$ and so does the  functional $\cF_2$.

\medskip

We proceed as in the classical setting (see for instance \cite{AmbGigSav,JKO}): we define a recursive sequence of densities associated to a fixed time step $\t$, then we introduce suitable interpolations between these densities and take the limit as $\t\downarrow 0$.  

Let us introduce now the scheme. For this, we consider $\t>0$  a fixed time step and $N\in\N$ such that $N\t =T.$ Let $(\rho^1_0,\rho^2_0)$ be two given initial densities. For all $k\in\{0,\dots,N\}$ we define $\br_k^\t:=(\rho^{1,\t}_k,\rho^{2,\t}_k)$ as
$$\br_0^\t=(\rho^{1,\t}_0,\rho^{2,\t}_0):=(\rho^1_0, \rho^2_0)$$
and for $k\ge 0$
\be\label{gf:tau}
\br^\t_{k+1}=(\rho^{1,\t}_{k+1},\rho^{2,\t}_{k+1})={\rm{argmin}}_{\br\in\sP^{M_1}(\Om)\times\sP^{M_2}(\Om)}\left\{\cF_m(\br)+\cG(\br)+\frac{1}{2\t}\bW_2^2(\br,\br^\t_{k})\right\}.\tag{$\text{MM}_m$}
\ee
In this scheme either $m>1$ but finite, or $m=\infty.$ Here $\bW_2$ denotes the Wasserstein distance on the product space $\sP^{M_1}(\Om)\times\sP^{M_2}(\Om)$, i.e. $\bW_2^2(\bm,\bn):=W_2^2(\mu^1,\nu^1)+W_2^2(\mu^2,\nu^2),$ where $\bm:=(\mu^1,\mu^2)$, and $\bn:=(\nu^1,\nu^2).$ 

We state the following well-known lemma.
\begin{lemma}
The objective functional in the minimization problem \eqref{gf:tau} is l.s.c. and bounded from below and $\sP^{M_1}(\Om)\times\sP^{M_2}(\Om)$ is compact, thus the optimizer exists. Moreover, $\cF_m$ ($m\in[1,+\infty]$) and $\cG$ are convex functionals and the functional  $\br\mapsto\bW_2^2(\br,\bm)$ is strictly convex whenever $\bm=(\mu^1,\mu^2)$ has absolutely continuous density coordinates (see for instance \cite{OTAM}). Therefore, if the densities $(\rho^1_0,\rho^2_0)$ are absolutely continuous w.r.t. $\sL^d\mres\Om$, the optimizer $\br^\t_k$ is also unique at each step. 
\end{lemma}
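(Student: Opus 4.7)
The plan is to apply the direct method of the calculus of variations for existence, then to extract uniqueness from a strict-convexity argument. First I would verify that each summand of the objective is lower semicontinuous with respect to narrow (weak-$*$) convergence of measures on the compact domain $\Om$: $\cF_m$ for $m\in(1,\infty)$ by the standard lower semicontinuity theorem for integral functionals with convex integrands; $\cF_\infty$ because it is the indicator of the narrowly closed convex set $\cK_1$; $\cG$ is in fact narrowly continuous since $\Phi_1,\Phi_2$ are continuous on the compact set $\Om$; and $\bW_2^2(\cdot,\br^\t_k)$ is lower semicontinuous by a classical property of the $2$-Wasserstein distance (see \cite{AmbGigSav,OTAM}). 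Boundedness from below is immediate from $\cF_m,\bW_2^2\ge 0$ and from the boundedness of the continuous potentials $\Phi_i$ on $\Om$. Compactness of $\sP^{M_1}(\Om)\times\sP^{M_2}(\Om)$ in the narrow topology follows from Prokhorov's theorem applied on the compact set $\Om$, and the uniform bound on second moments makes this coincide with the $W_2$-topology. Existence of a minimizer then follows from the direct method.

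For the convexity claims, $\cG$ is affine, hence convex, while $\cF_m$ is convex because $(\rho^1,\rho^2)\mapsto\rho^1+\rho^2$ is linear and $t\mapsto t^m/(m-1)$ is convex (and for $m=\infty$ the set $\cK_1$ is convex). Convexity of $\br\mapsto\bW_2^2(\br,\bm)$ with respect to the standard linear structure on measures follows from Kantorovich duality, which represents each $W_2^2(\cdot,\mu^i)$ as a supremum of affine functionals.

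The delicate point, which I expect to be the main obstacle, is the strict convexity of $\bW_2^2(\cdot,\bm)$ when $\bm=(\mu^1,\mu^2)$ has absolutely continuous components. The argument proceeds coordinatewise. Brenier's theorem guarantees that whenever $\mu^i$ is absolutely continuous, the optimal transport from $\mu^i$ to any $\rho^i\in\sP^{M_i}(\Om)$ is induced by a unique map $T^i_{\rho^i}=\nabla\varphi^i_{\rho^i}$ with $\varphi^i_{\rho^i}$ convex. Given $\rho^i_0\neq\rho^i_1$ and $\lambda\in(0,1)$, the convex combination of the two optimal plans $(\id,T^i_{\rho^i_0})_\#\mu^i$ and $(\id,T^i_{\rho^i_1})_\#\mu^i$ is admissible between $\mu^i$ and $\rho^i_\lambda:=\lambda\rho^i_0+(1-\lambda)\rho^i_1$, but it is not concentrated on a graph (since $T^i_{\rho^i_0}\neq T^i_{\rho^i_1}$), hence cannot be optimal; this provides the strict inequality. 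Full details are given in \cite{OTAM}.

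Combining the strict convexity of $\bW_2^2(\cdot,\br^\t_k)$ with the convexity of $\cF_m+\cG$ yields strict convexity of the full objective whenever $\br^\t_k$ has absolutely continuous coordinates, and hence uniqueness of $\br^\t_{k+1}$. This absolute continuity is preserved along the scheme: finiteness of $\cF_m(\br^\t_{k+1})$ forces $\rho^{1,\t}_{k+1}+\rho^{2,\t}_{k+1}\in L^m(\Om)$ (in the $m=\infty$ case the constraint $\rho^1+\rho^2\le 1$ is built in), and nonnegativity of the components then implies absolute continuity of each $\rho^{i,\t}_{k+1}$ individually, so the uniqueness propagates to all subsequent steps.
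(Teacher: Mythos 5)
Your argument is correct and follows the standard route that the paper itself invokes by citing \cite{OTAM}: lower semicontinuity of each term with compactness of the product probability space gives existence by the direct method, and uniqueness follows from the strict convexity of $W_2^2(\cdot,\mu)$ for absolutely continuous $\mu$ (via Brenier's theorem and uniqueness of the optimal plan) combined with the propagation of absolute continuity through finiteness of $\cF_m$ at each step. The paper states the lemma without proof, so there is no alternative argument to compare against; your proof fills in exactly the standard details.
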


\subsection{Different diffusion coefficients for the two densities}\label{constants}
In many cross-diffusion models (coming mainly from mathematical biology or fluid mechanics, see for instance in \cite{BerGurHilPel, LorLorPer}) considered in the literature, it is important to have different diffusion coefficients for the two densities. In our setting, this could be formulated as follows. Given $\k_1,\k_2$ positive constants, consider a system similar to \eqref{eq:PME_m} or \eqref{eq:PME_infty}, i.e. 
\be\label{eq:k_1-k_2}
\left\{
\ba{l}
\partial_t\rho^1-\diver\left(\k_1\nabla p\rho^1+\nabla\Phi_1\rho^1\right)=0\\ [5pt]
\partial_t\rho^2-\diver\left(\k_2\nabla p\rho^2+\nabla\Phi_2\rho^2\right)=0
\ea
\right.
\ee
on $[0,T]\times\Om,$ where $\ds p:=\frac{m}{m-1}(\rho^1+\rho^2)^{m-1},$
with $m>1$ and where $\Phi_1,\Phi_2:\Om\to\R$ are given potentials. Observe that \eqref{eq:PME_m} corresponds to $\k_1=\k_2=1$. Actually, even for $\k_1\neq\k_2$, this system enters naturally into the framework of gradient flows considered in this paper. Indeed, we can define the minimizing movement scheme as
$$\ds (\rho^{1,\t}_{k+1},\rho^{2,\t}_{k+1})={\rm{argmin}}_{(\rho^1,\rho^2)}\left\{\cF(\rho^1,\rho^2)+\int_\Om\frac{\Phi_1}{\k_1}\rho^1\dd x+\int_\Om\frac{\Phi_2}{\k_2}\rho^2\dd x+\frac{1}{2\t\k_1}W_2^2(\rho^1,\rho^{1,\t}_k)+\frac{1}{2\t\k_2}W_2^2(\rho^2,\rho^{2,\t}_k)\right\}.$$
Actually a part of the analysis that we perform in the forthcoming sections will be valid in this case as well. In particular the results from Section \ref{sec:PDE_m} can be easily adapted to the system \eqref{eq:k_1-k_2}.

\bigskip

\subsection{Properties of the minimizers}\label{subsec:prop_min}

We discuss now some properties of the minimizers in \eqref{gf:tau}.  For this, let us consider the following hypotheses
\be \label{hyp:rho}
\left\{
\ba{ll}
\rho^1_0,\rho^2_0\in L^m(\Om), & {\rm{if\ }} m\in(1,+\infty),\\[5pt]
\|\rho^1_0+\rho^2_0\|_{L^\infty(\Om)}\le 1\ \ {\rm{and}} \ \ \sL^d(\Om)> M_1+M_2, & {\rm{if\ }} m=+\infty;
\ea
\right.
\tag{$\text{H}_\rho^m$}
\ee
Notice that the structural condition $\sL^d(\Om)> M_1+M_2$ in the case of $m=+\infty$ is needed in order to have nontrivial competitors that satisfy the upper bound constraint.
\be\label{hyp:phi}
\Phi_1,\Phi_2\in W^{1,\infty}(\Om).\tag{H$_\Phi$}
\ee

First, let us derive the first order necessary optimality conditions for the minimizers in \eqref{gf:tau}.

\begin{lemma}[Optimality conditions: $m$ finite]\label{lem:opt_cond}
Let $m\in(1,+\infty)$ and let $\Phi_1$ and $\Phi_2$ satisfy \eqref{hyp:phi} and $(\rho_0^1,\rho^2_0)$ satisfy \eqref{hyp:rho}. Let $(\rho^1,\rho^2)$ be the unique minimizer in \eqref{gf:tau} with $k=0$. Then
\begin{enumerate}
\item[(1)] there exist Kantorovich potentials $\vphi^i$, $i=1,2$, in the transport of $\rho^i$ onto $\rho^i_0$ and $C_i\in\R$ ($i=1,2$) such that
\be\label{opt_cond:compact}
\frac{m}{m-1}(\rho^1+\rho^2)^{m-1}=\max\left(C_1-\Phi_1-\vphi^1/\t; C_2-\Phi_2-\vphi^2/\t;0\right),
\ee
In particular $(\rho^1+\rho^2)^{m-1}$ is Lipschitz continuous, $\rho^1+\rho^2 \in C^{0,1/(m-1)}(\Om),$  and these regularities degenerate as $\t\da 0.$

\item[(2)] One can differentiate the above equality a.e. and the optimal transport maps $T^i$ ($i=1,2$) in the transport of $\rho^i$ onto $\rho^i_0$ have the form 
$$
T^i=\id+\t\left(\frac{m}{m-1}\nabla (\rho^1+\rho^2)^{m-1}+\nabla\Phi_i\right)
$$
\end{enumerate}
\end{lemma}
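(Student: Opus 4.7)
The plan is to derive \eqref{opt_cond:compact} via standard outer variations in the space of measures, applied to one density at a time while holding the other fixed. For fixed $i\in\{1,2\}$, take $\eta\in L^\infty(\Om)$ of zero mean with $\spt\eta$ compactly contained in $\{\rho^i>\delta\}$ for some $\delta>0$; then $\rho^i+\e\eta$ is a nonnegative competitor with mass $M_i$ for $|\e|$ small. The three first-variation identities (see Appendix \ref{sec:appendix_ot} or \cite{OTAM,AmbGigSav}) produce respectively $\int_\Om \tfrac{m}{m-1}(\rho^1+\rho^2)^{m-1}\eta\dd x$ from $\cF_m$, $\int_\Om\Phi_i\eta\dd x$ from $\cG$, and $\tfrac{1}{\t}\int_\Om\vphi^i\eta\dd x$ from the Wasserstein term, where $\vphi^i$ is any Kantorovich potential from $\rho^i$ onto $\rho^i_0$. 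Stationarity combined with the zero-mean constraint on $\eta$ then yields a Lagrange multiplier $C_i\in\R$ such that $\tfrac{m}{m-1}(\rho^1+\rho^2)^{m-1}+\Phi_i+\vphi^i/\t = C_i$ a.e. on $\{\rho^i>0\}$.

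To globalize this into the pointwise $\max$ identity I would test against one-sided perturbations: $\eta\ge 0$ supported outside $\spt\rho^i$, compensated by a negative bump deep inside $\{\rho^i>\delta\}$ to preserve mass. The resulting variational inequality reads $\tfrac{m}{m-1}(\rho^1+\rho^2)^{m-1}+\Phi_i+\vphi^i/\t\ge C_i$ a.e. on $\{\rho^i=0\}$. Combining the two cases for $i=1,2$ and noting that the left-hand side vanishes on $\{\rho^1+\rho^2=0\}$ (forcing both $C_i-\Phi_i-\vphi^i/\t\le 0$ there), one verifies at a.e. point of $\Om$ that the pressure equals the maximum of $0$ and the two quantities $C_i-\Phi_i-\vphi^i/\t$: on $\{\rho^i>0\}$ the $i$-th candidate matches the pressure and dominates the others, while on $\{\rho^1+\rho^2=0\}$ all three candidates are $\le 0$ with $0$ attaining the maximum. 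This produces \eqref{opt_cond:compact}. Lipschitz continuity of the right-hand side is inherited from \eqref{hyp:phi} and the Lipschitz character of the Kantorovich potentials on the bounded domain $\Om$ (with $\|\nabla\vphi^i\|_\infty\le\mathrm{diam}(\Om)$), with Lipschitz constant of order $1/\t$; the H\"older regularity $\rho^1+\rho^2\in C^{0,1/(m-1)}$ follows since $t\mapsto t^{1/(m-1)}$ is H\"older of exponent $1/(m-1)$.

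For part (2), absolute continuity of $\rho^i$ is preserved through the JKO step (otherwise $\cF_m$ would blow up), so Brenier's theorem yields $T^i=\id-\nabla\vphi^i$ $\rho^i$-a.e. Since both sides of the identity in (1) are Lipschitz, Rademacher's theorem permits a.e.\ differentiation on the open set $\{\rho^i>0\}$ to give $\tfrac{m}{m-1}\nabla(\rho^1+\rho^2)^{m-1}=-\nabla\Phi_i-\nabla\vphi^i/\t$; substituting into $T^i=\id-\nabla\vphi^i$ produces the stated formula. I expect the main obstacle to lie in the globalization step of (1), where one must carefully justify admissibility of the one-sided competitors (in particular nonnegativity and preservation of mass as $\e\to 0^+$) and then patch together the equality on $\{\rho^i>0\}$ with the inequality on $\{\rho^i=0\}$ into the single pointwise $\max$ identity valid on all of $\Om$.
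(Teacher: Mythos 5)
Your argument is essentially the same as the paper's, which defers to Lemma \ref{cor:opt_cond} (itself derived via first-variation/perturbation arguments as in \cite{ButSan} and \cite[Chapter~7]{OTAM}) together with the cross-comparison of the two single-density conditions to assemble the $\max$-identity. The only imprecision is treating $\{\rho^i>0\}$ as an open set in part (2); since it is merely a measurable set, the a.e.\ differentiation of the identity restricted to $\{\rho^i>0\}$ should be justified via Rademacher's theorem on all of $\Om$ combined with the Lebesgue density theorem (differentiating at density points of $\{\rho^i>0\}$), rather than by invoking openness.
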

\begin{proof}
The proof of these results are just easy adaptations of the ones from Lemma \ref{cor:opt_cond}, thus we omit it. 


\end{proof}

\begin{lemma}[Optimality conditions: $m=\infty$]\label{lem:opt_cond_infty}
Let $m=\infty$ and let $\Phi_1$ and $\Phi_2$ satisfy \eqref{hyp:phi} and $(\rho_0^1,\rho^2_0)$ satisfy \eqref{hyp:rho}. Let $(\rho^1,\rho^2)$ be the unique minimizer in \eqref{gf:tau} with $k=0$. Then
\begin{itemize}
\item[(1)] there exist Kantorovich potentials $\vphi^i$ in the transport of $\rho^i$ onto $\rho^i_0$ ($i=1,2$) such that
\be
\int_\Om (\Phi_1+\vphi^1/\t)(\mu^1-\rho^1)\dd x+\int_\Om (\Phi_2+\vphi^2/\t)(\mu^2-\rho^2)\dd x\ge 0,
\ee
for any $(\mu^1,\mu^2)\in\sP^{M_1}(\Om)\times\sP^{M_2}(\Om)$ such that $\mu^1+\mu^2\le 1$  a.e. in $\Om$.
\item[(2)] There exists a Lipschitz continuous \emph{pressure function} $p$  that can be defined via the Kantorovich potentials $\vphi^1,\vphi^2$ from (1) as
\be\label{def:p_grad}
\nabla p=-\nabla\vphi^i/\t-\nabla\Phi_i, \ \rho^i-\ae,\ i=1,2, 
\ee
and $p\ge 0$ and $p(1-(\rho^1+\rho^2))=0$ a.e. in $\Om.$ In particular, the optimal transport map $T^i$ in the transportation of $\rho^i$ onto $\rho^i_0$ ($i=1,2$) has the form 
$$T^i=\id +\t\left(\nabla p+\nabla\Phi_i\right).$$
\end{itemize}
\end{lemma}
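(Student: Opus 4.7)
The plan is to exploit that $\cF_\infty$ is simply the indicator of the convex set $\cK_1$, so the minimizer satisfies the Euler--Lagrange conditions of a linearly-constrained convex problem, from which the pressure emerges as a Lagrange multiplier attached to the pointwise saturation $\rho^1+\rho^2\le 1$.

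For part~(1), I would fix any admissible competitor $\bn=(\nu^1,\nu^2)\in\sP^{M_1}(\Om)\times\sP^{M_2}(\Om)$ with $\nu^1+\nu^2\le 1$ a.e.\ and use convexity of $\cK_1$ to form the linear interpolation $\br_s:=(1-s)\br+s\bn$, which remains in $\cK_1$ for every $s\in[0,1]$, so $\cF_\infty(\br_s)\equiv 0$ contributes nothing to the first variation. The $\cG$-term is linear in $s$, while the classical first-variation identity for $\tfrac{1}{2}W_2^2(\cdot,\rho^i_0)$ along linear interpolations yields the derivative $\int_\Om(\vphi^i/\t)(\nu^i-\rho^i)\dd x$ at $s=0^+$, where $\vphi^i$ is a Kantorovich potential for the transport of $\rho^i$ onto $\rho^i_0$. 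Summing over $i=1,2$ and invoking minimality of $\br$ delivers the variational inequality of~(1).

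For part~(2), the inequality of~(1) exactly expresses that $\br$ minimizes the \emph{linear} functional $(\mu^1,\mu^2)\mapsto\int_\Om u^1\dd\mu^1+\int_\Om u^2\dd\mu^2$, with $u^i:=\Phi_i+\vphi^i/\t$, over $\cK_1\cap(\sP^{M_1}(\Om)\times\sP^{M_2}(\Om))$. The structural assumption $\sL^d(\Om)>M_1+M_2$ from~\eqref{hyp:rho} secures Slater's condition (via the uniform competitors $\mu^i\equiv M_i/\sL^d(\Om)$), yielding Lagrange multipliers $p\ge 0$ for the pointwise constraint and $\lambda_i\in\R$ for the mass constraints, with $u^i+p=\lambda_i$ on $\{\rho^i>0\}$, $u^i+p\ge\lambda_i$ on $\Om$, and $p(1-\rho^1-\rho^2)=0$ a.e. Rather than appealing to a black-box duality theorem, I would derive these identities constructively by probing~(1) with mass-transfer perturbations $\mu^i=\rho^i+s(\eta_x-\xi_y)$ (and $\mu^j=\rho^j$, $j\neq i$), where $\eta_x$ and $\xi_y$ are normalized bumps of equal small mass around $x\in\{\rho^1+\rho^2<1\}$ and $y\in\{\rho^i>0\}$: one reads off $u^i(x)\ge u^i(y)$ at Lebesgue points, and swapping the two roles whenever possible forces $u^i$ to be constant on $\{\rho^i>0\}\cap\{\rho^1+\rho^2<1\}$. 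Setting $p:=\max(\lambda_1-u^1,\lambda_2-u^2,0)$ on $\Om$ then yields a nonnegative global representative.

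Lipschitz regularity of $p$ follows from Lipschitz regularity of each $\vphi^i$ (which is $c$-concave on the bounded domain $\Om$) and of $\Phi_i$ via~\eqref{hyp:phi}. Differentiating $\vphi^i/\t=\lambda_i-\Phi_i-p$ on $\{\rho^i>0\}$ yields~\eqref{def:p_grad}, and Brenier's identity $T^i=\id-\nabla\vphi^i$ then gives the transport map formula $T^i=\id+\t(\nabla p+\nabla\Phi_i)$. The main obstacle I anticipate lies in producing $p$ as a single \emph{global} Lipschitz function consistent across both densities: compatibility on the overlap $\{\rho^1>0\}\cap\{\rho^2>0\}$ must be extracted from transfer perturbations that swap mass \emph{between the two species}, and this is the only step in the whole argument where the coupling between $\rho^1$ and $\rho^2$ genuinely comes in beyond the bound $\rho^1+\rho^2\le 1$.
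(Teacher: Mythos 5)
Your part~(1) is exactly the standard argument (linear interpolation inside the convex set $\cK_1$, first variation of $\tfrac12 W_2^2(\cdot,\rho^i_0)$ via the Kantorovich potential), and it matches what the paper references do (cf.\ \cite{MauRouSan1, LabPhD}). For part~(2), the overall architecture—interpret~(1) as constrained minimization of the linear functional $\sum_i\int u^i\,d\mu^i$ over $\cK_1$, produce a single multiplier $p\geq0$ for the shared constraint $\mu^1+\mu^2\leq1$, and read off $\nabla p=-\nabla\vphi^i/\tau-\nabla\Phi_i$ on $\{\rho^i>0\}$—is the right one, and your candidate $p:=\max(\lambda_1-u^1,\lambda_2-u^2,0)$ is the right Lipschitz object.

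The gap is the one you name but do not close: you must show $p=\lambda_i-u^i$ (not merely $p\geq\lambda_i-u^i$) $\rho^i$-a.e., which amounts to $\lambda_j-u^j\leq\lambda_i-u^i$ on $\{\rho^i>0\}$, $j\neq i$. Single-species perturbations (fixing $\mu^j=\rho^j$) only yield the one-sided relations $u^i\leq\lambda_i$ on $\{\rho^i>0\}$ and $u^i\geq\lambda_i$ on $\{\rho^1+\rho^2<1\}$; they say nothing about the other potential $u^j$ there. To get compatibility, you need cross-species swaps: for Lebesgue points $x\in\{\rho^i>0\}$, $y\in\{\rho^j>0\}$, perturb by moving $\epsilon$ mass from species $i$ to species $j$ at $x$ and from $j$ to $i$ at $y$ (admissible since the sums at $x$ and $y$ are unchanged and both donor densities are positive), which after sending $\epsilon\downarrow 0$ gives
\[
u^j(x)-u^i(x)\geq u^j(y)-u^i(y)\qquad\forall\ x\in\{\rho^i>0\},\ y\in\{\rho^j>0\}.
\]
In particular $u^1-u^2$ is constant on $\{\rho^1>0\}\cap\{\rho^2>0\}$. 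If $\{\rho^1>0\}\cap\{\rho^2>0\}\cap\{\rho^1+\rho^2<1\}$ is non-null, choosing $y$ there makes both sides equal to $\lambda_j-\lambda_i$ and the needed inequality follows. If that set is null (the overlap is fully saturated), this particular chain of inequalities does not immediately close, and one has to combine cross- and single-species transfers involving a third point in $\{\rho^1+\rho^2<1\}$ (or, more cleanly, invoke Lagrangian duality for the linearized problem, for which Slater's condition via $\sL^d(\Om)>M_1+M_2$ gives a \emph{single} multiplier $p$ with $u^i+p=\lambda_i$ on $\{\rho^i>0\}$ by complementary slackness). Since you explicitly ruled out the ``black-box duality'' route, the burden falls on the elementary perturbation argument, and the saturated-overlap case is precisely the step that is not yet proved.

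Two smaller remarks: the identity $p(1-\rho^1-\rho^2)=0$ does follow from your $\max$ formula once you observe $u^i\geq\lambda_i$ on $\{\rho^1+\rho^2<1\}$, but be careful that you never used perturbations adding mass to $\rho^i$ on $\{\rho^j=1\}$ (inadmissible), so the inequality $u^i\geq\lambda_i$ is only known on $\{\rho^1+\rho^2<1\}$ and on $\{\rho^i>0\}$; that is enough here. Also, the paper itself does not prove this lemma but cites \cite{MauRouSan1} and \cite{LabPhD}; your plan is in their spirit, but the references ultimately lean on the dualization that you chose to bypass, which is why the compatibility step is the sticking point.
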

\begin{proof}
The proof of the above results are adaptations of the ones from \cite[Lemma 3.1-3.2]{MauRouSan1} and \cite[Lemma 6.11-Proposition 6.12]{LabPhD}, so we omit it. 

\end{proof}

\subsection{Equilibrium solutions when $m<+\infty$}\label{sec:eq_sol}
Let us study the equilibrium solutions $(\ov\rho^1,\ov\rho^2)$ of the scheme \eqref{gf:tau}, meaning that $(\ov\rho^1,\ov\rho^2)$ is a minimizer of the \emph{free energy} $\cF+\cG$. This exists by the l.s.c. and boundedness from below of the functional and the compactness of $\sP^{M_1}(\Om)\times\sP^{M_2}(\Om)$. Then writing down the first order optimality conditions as in Lemma \ref{lem:opt_cond}, one obtains that
\be
\left\{
\ba{ll}
\frac{m}{m-1}(\ov\rho^1+\ov\rho^2)^{m-1}=C_i-\Phi_i &  \hbox{ in }\ \{\ov\rho^i >0\} ,\\[5pt]
\frac{m}{m-1}(\ov\rho^1+\ov\rho^2)^{m-1}\ge C_i-\Phi_i, & \hbox{ in }\ \{\ov\rho^i=0\},
\ea
\right.
\ee 
or in short
$$
\frac{m}{m-1}(\ov\rho^1+\ov\rho^2)^{m-1} = \max[C_1-\Phi_1; C_2-\Phi_2;0],
$$
for $i=1,2$ and for some constants $C_1,C_2\in\R$. For simplicity in this informal discussion one may suppose that both $\Phi_1$ and $\Phi_2$ are strictly convex with a unique minimizer in $\Om$. Otherwise the constants $C_i$ may vary on each connected component of $\{\ov\rho^i>0\}$. Observe that the above conditions imply in particular that whenever the potentials $\Phi_1$ and $\Phi_2$ are different and their difference is not only a constant, then the phases $\ov\rho^1$ and $\ov\rho^2$ are separated, i.e. $\sL^d\left(\{\ov\rho^1>0\}\cap\{\ov\rho^2>0\}\right)=0.$ Moreover, in general the interface $\{\ov\rho^1>0\} \cap \{\ov\rho^2>0\}$ is present and on the interface the densities $\rho^i$ ($i=1,2$) are positive. For instance this is the case when we take potentials $\Phi_1(x) = |x|^2$ and $\Phi_2 = 2|x|^2$ and $C_1, C_2$ are such that $0<C_1<C_2$ and both densities are present.


\medskip

In fact, with the above choice of potentials $\Phi_i$, $i=1,2$, suppose that we start our minimizing movements with initial configuration of densities $\rho^1_0 = \chi_{\{|x|\le 1\}}$ and $\rho^2_0 = \chi_{\{1<|x|<2\}}$. In the equilibrium limit we have $\{\ov\rho^2>0\} = \{|x| \leq r_1\}$ and $\{\ov\rho^2>0\} = \{r_1\leq |x|\leq r_2\}$ for some $0<r_1<r_2$. Thus, if solutions $(\rho^1,\rho^2)$ of the system \eqref{eq:PME_m} exist with these initial data and potentials, heuristically it is inevitable that the supports of $\rho^1_t$ and $\rho^2_t$ get mixed for some finite time $t>0$, while $\rho^1$ ``filtrates'' through $\rho^2$ to change the ordering of their supports from the initial configuration. Such situation indicates low regularity for each density, and illustrates the difficulty in obtaining a strong notion of limit solutions for \eqref{eq:PME_m} in the continuum limit. Indeed in general we are only able to obtain a very weak notion of solutions in the continuum limit, as we will see in Theorem~\ref{thm:precise}. Deriving this weak notion of solutions in general settings is our first main result in the paper.  To the best of the authors' knowledge, there does not seem to be a PDE approach to yield well-posedness on the continuum PDE \eqref{eq:PME_m}, especially when $\nabla\Phi_1\neq\nabla\Phi_2$.

\medskip

On the other hand, if the initial configuration of above example is in line with the potentials, i.e. if we switch the roles of $\rho^1_0$ and $\rho^2_0$, we expect the solutions to be well-behaved and to stay separated throughout the evolution, with stable interface in between them. It turns out that we can indeed show such separation in one spacial dimension. In this case stronger results are available, and one can derive stronger notion of solutions as well as the properties of the solutions and their interfaces in the incompressible limit $m\to\infty$, which in some cases leads to a type of two-phase Hele-Shaw flow with drifts (see Section \ref{sec:limit_with_m}).



\subsection{Regularity of the minimizers in the \eqref{gf:tau} scheme}

\begin{theorem}\label{thm:reg_1step}
Let $m\in(1,+\infty).$ Let $(\rho^1_0,\rho^2_0)\in\sP^{M_1}(\Om)\times\sP^{M_2}(\Om)$ satisfying \eqref{hyp:rho} and let \eqref{hyp:phi} be fulfilled. Let $(\rho^1,\rho^2)$ be the minimizer in \eqref{gf:tau} constructed with the help of $(\rho^1_0,\rho^2_0)$. Then 
\be\label{estim:Lm_1step}
\rho^1, \rho^2\in L^m(\Om)
\ee
and
\be\label{estim:H1}
(\rho^1+\rho^2)^{m-1/2}\in H^1(\Om).
\ee
If $m=+\infty$, $\rho^1+\rho^2\le 1$ a.e. in $\Om$.
\end{theorem}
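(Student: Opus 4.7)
The plan is to extract both regularity statements directly from the first-order optimality conditions of Lemmas~\ref{lem:opt_cond} and \ref{lem:opt_cond_infty}, without appealing to displacement convexity or flow interchange (both of which are ruled out for this free energy, as emphasized in the introduction).

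The case $m=+\infty$ would be essentially a feasibility remark: by \eqref{hyp:rho} the initial datum $(\rho^1_0,\rho^2_0)$ already satisfies $\rho^1_0+\rho^2_0\le 1$, so it is an admissible competitor in \eqref{gf:tau} with $\cF_\infty=0$, and the minimum value is therefore finite. Consequently any optimizer must lie in $\cK_1$, i.e.\ $\rho^1+\rho^2\le 1$ a.e.\ in $\Om$.

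For $m\in(1,+\infty)$ I would start from the pointwise identity \eqref{opt_cond:compact},
\begin{equation*}
\tfrac{m}{m-1}(\rho^1+\rho^2)^{m-1}=\max\bigl(C_1-\Phi_1-\vphi^1/\t,\;C_2-\Phi_2-\vphi^2/\t,\;0\bigr),
\end{equation*}
and read off Lipschitz regularity of the right-hand side. The drifts $\Phi_i$ are Lipschitz by \eqref{hyp:phi}, and the Kantorovich potentials $\vphi^i$ for the quadratic cost on the bounded domain $\Om$ can be chosen Lipschitz with a constant controlled by $\mathrm{diam}(\Om)$ (a standard semiconcavity computation recalled in Appendix~\ref{sec:appendix_ot}). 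A maximum of finitely many Lipschitz functions being Lipschitz, I conclude $(\rho^1+\rho^2)^{m-1}\in W^{1,\infty}(\Om)$, and in particular $\rho^1+\rho^2\in L^\infty(\Om)$. The pointwise bound $0\le\rho^i\le \rho^1+\rho^2$ then upgrades this to $\rho^i\in L^\infty(\Om)\subset L^m(\Om)$, giving \eqref{estim:Lm_1step}.

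For the Sobolev bound \eqref{estim:H1} I would set $u:=\rho^1+\rho^2$ and rewrite $u^{m-1/2}=(u^{m-1})^{\beta}$ with $\beta:=(m-1/2)/(m-1)>1$. Since $s\mapsto s^\beta$ is of class $C^1$ on $[0,+\infty)$ with vanishing derivative at the origin, the standard chain rule applied to its composition with the Lipschitz function $u^{m-1}$ yields $u^{m-1/2}\in W^{1,\infty}(\Om)$ together with the a.e.\ identity
\begin{equation*}
\nabla u^{m-1/2}=\tfrac{m-1/2}{m-1}\,u^{1/2}\,\nabla u^{m-1}.
\end{equation*}
The right-hand side belongs to $L^\infty(\Om)$, and combined with boundedness of $u^{m-1/2}$ itself, this gives \eqref{estim:H1} (in fact the stronger $W^{1,\infty}$-bound) on the bounded domain $\Om$.

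The main subtlety of this route is quantitative rather than qualitative: every constant produced above inherits a factor $1/\t$ through the terms $\vphi^i/\t$, so the Lipschitz and $H^1$ bounds degenerate as $\t\da 0$, exactly as already flagged in Lemma~\ref{lem:opt_cond}. That is harmless for a one-step regularity statement, but it pinpoints the genuine difficulty that has to be addressed elsewhere in the paper — producing $\t$-uniform analogues of \eqref{estim:H1} adequate for the continuum limit — using only first-order optimality information, since no separate entropies, no $\lambda$-convexity and no flow interchange are available in our setting. A purely technical point, the justification of the chain rule at $\{u=0\}$, is handled cleanly by the condition $\beta>1$, which ensures that $s^\beta$ remains $C^1$ up to the origin.
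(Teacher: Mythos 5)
Your proof is correct as a proof of the one-step statement, but it takes a genuinely different route from the paper, and the difference matters for what is done afterwards. The paper derives the $L^m$ bound \eqref{estim:Lm_1step} from the elementary energy comparison $\cF_m(\br)\le \frac{1}{2\t}\bW_2^2(\br,\br_0)+\cF_m(\br_0)+\cG(\br_0)-\cG(\br)$, which is prior to and independent of Lemma~\ref{lem:opt_cond}; you instead invoke \eqref{opt_cond:compact} and the Lipschitz bound on the Kantorovich potentials, which is logically downstream of the $L^m$ bound (the first variation of $\cF_m$ that yields \eqref{opt_cond:compact} already presupposes $\cF_m(\br)<\infty$), so your argument is a bit circular there even though the conclusion is right. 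More substantively, for \eqref{estim:H1} the paper does \emph{not} pass through the Lipschitz bound on $(\rho^1+\rho^2)^{m-1}$: it reads the optimality condition \eqref{opt:cond} as an $L^2_{\rho^i}$ identity, uses $\int|\nabla\vphi^i|^2\rho^i\,\dd x=W_2^2(\rho^i,\rho^i_0)$, sums over $i=1,2$ so that the weight $\rho^1+\rho^2$ appears, and recombines $|\nabla(\rho^1+\rho^2)^{m-1}|^2(\rho^1+\rho^2)=\frac{(m-1)^2}{(m-1/2)^2}|\nabla(\rho^1+\rho^2)^{m-1/2}|^2$ to obtain the explicit quantitative inequality \eqref{estim:gradient_1step}. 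The crucial feature of \eqref{estim:gradient_1step}, which your chain-rule argument does not reproduce, is that the $\t$-dependence enters through $\bW_2^2(\br,\br_0)/\t^2$; this telescopes over time steps (Corollary~\ref{cor:bdd}, Theorem~\ref{thm:L2H1}) to a $\t$-uniform $L^2_tH^1_x$ bound, whereas your bound scales like $\mathrm{diam}(\Om)^2/\t^2$ uniformly in $x$ and integrates in time to something of order $T/\t^2$, which diverges. You correctly flag this loss at the end, but it is worth emphasizing that the paper's choice of proof for this one-step theorem is already engineered to supply the estimate needed later, not merely the qualitative membership statement.
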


\begin{proof}
First, setting $\br=(\rho^1,\rho^2)$ and $\br_0=(\rho^1_0,\rho^2_0),$ by the optimality of $\br$ in \eqref{gf:tau} w.r.t. $\br_0$, one obtains
\begin{align*}
\cF_m(\br)=\frac{1}{m-1}\int_\Om (\rho^1+\rho^2)^m\dd x & \le \frac{1}{2\t}\bW_2^2(\br,\br_0)+\cF_m(\br_0)+\cG(\br_0)-\cG(\br)\\
&\le \frac{1}{2\t}\bW_2^2(\br,\br_0)+\cF_m(\br_0)+2(M_1\|\Phi_1\|_{L^\infty}+M_2\|\Phi_2\|_{L^\infty}),
\end{align*}
which by the assumptions \eqref{hyp:rho} and \eqref{hyp:phi} implies \eqref{estim:Lm_1step} for $m$ finite. If $m=\infty,$ then clearly $\rho^1+\rho^2\le 1$ a.e. in $\Om$.

Second, writing down the first order optimality conditions (see Lemma \ref{lem:opt_cond}) for the above problem, one obtains 

$$\frac{m}{m-1}(\rho^{1}+\rho^{2})^{m-1}+\Phi_i+\frac{\vphi^i}{\t}=C_i,\;\; \iin\ \{\rho^{i}>0\},\;\; i=1,2,$$
where $\vphi^i$ is a Kantorovich potential in the optimal transport of $\rho^{i}$ onto $\rho^{i}_{0}.$ This potential is linked to the optimal transport map between these densities as $T^i(x)=x-\nabla\vphi^i(x).$ So, by Lemma \ref{lem:opt_cond}(2) one can write
\begin{equation}\label{opt:cond}
-\frac{m}{m-1}\nabla(\rho^{1}+\rho^{2})^{m-1}-\nabla\Phi_i=\frac{\nabla\vphi^i}{\t},\; \rho^i-\ae,\; i=1,2.
\end{equation}
Since the r.h.s. of \eqref{opt:cond} is in $L^2_{\rho^{i}}(\Om)$ with $\int_\Om\frac{1}{\t^2}|\nabla\vphi^i|^2\rho^{i}\dd x=\frac{1}{\t^2}W_2^2(\rho^{i},\rho^{i}_{0})$ and $\nabla\Phi_i\in L^2_{\rho^i}(\Om;\R^d)$ we have the estimation
$$\int_\Om\left|\nabla(\rho^{1}+\rho^{2})^{m-1}\right|^2\rho^{i}\dd x\le \frac{2(m-1)^2}{m^2}\left(\frac{1}{\t^2}W_2^2(\rho^{i},\rho^{i}_{0})+M_i\|\nabla\Phi_i\|_{L^\infty}^2\right).$$
Adding up the two inequalities for $i=1,2,$ one obtains after rearranging 
\begin{equation}\label{estim:gradient_1step}
\int_\Om\left|\nabla(\rho^{1}+\rho^{2})^{m-1/2}\right|^2\dd x\le\frac{2(m-1/2)^2}{m^2}\left(\frac{1}{\t^2}\bW_2^2(\br,\br_{0})+M_1\|\nabla\Phi_1\|_{L^\infty}^2+M_2\|\nabla\Phi_2\|_{L^\infty}^2\right).
\end{equation}
By the estimation \eqref{estim:Lm_1step} $\rho^{1}+\rho^{2}$ is  bounded in $L^m(\Om)$, so by the fact that $\Om$ is compact, $\rho^1+\rho^2$ is summable in $L^q(\Om)$ for any $1\le q\le m.$ This means in particular that the average can be bounded as 
$$ \fint_\Om(\rho^{1}+\rho^{2})^{m-1/2}\dd x\le\|\rho^1+\rho^2\|_{L^m}^{m-1/2} \sL^d(\Om)^{1/(2m)-1}$$ 
hence Poincar\'e's inequality yields that $\|(\rho^{1}+\rho^{2})^{m-1/2}\|_{L^2(\Om)}$ is  bounded, more precisely 
\begin{align*}
\|(\rho^{1}+\rho^{2})^{m-1/2}\|_{L^2(\Om)}&\le C_\Om\|\nabla(\rho^{1}+\rho^{2})^{m-1/2}\|_{L^2(\Om)}+\sL^d(\Om)^{\frac12}\fint_\Om(\rho^{1}+\rho^{2})^{m-1/2}\dd x\\
&=C_\Om\|\nabla(\rho^{1}+\rho^{2})^{m-1/2}\|_{L^2(\Om)}+\|\rho^1+\rho^2\|_{L^m}^{m-1/2} \sL^d(\Om)^{1/(2m)-1/2}
\end{align*}
where $C_\Om>0$ is the Poincar\'e constant associated to $\Om.$ Thus, \eqref{estim:H1} follows.
\end{proof}

\section{The continuum limit solutions in general dimension}\label{sec:PDE_m}

In this section we study the convergence of the time-discrete solutions in the continuum limit. The limit solutions can be interpreted as a \emph{very weak} solution for both systems \eqref{eq:PME_m} and \eqref{eq:PME_infty} in the following sense:

\begin{definition}[Notion of weak solution]\label{def:weak_and_very_weak}
By a \emph{weak solution} of system \eqref{eq:PME_m} we mean a pair $(\rho^1,\rho^2)$ such that 
$\rho^i\in AC^2([0,T];\sP^{M_i}(\Om))\cap L^{2m-1}([0,T]\times\Om)$, and setting $p:=\frac{m}{m-1}(\rho^1+\rho^2)^{m-1}$, $\nabla p\rho^i\in L^r([0,T]\times\Om; \R^d),$ for some $1<r<2$ ($i=1,2$). Moreover $\rho^i|_{t=0}=\rho^i_0$ ($i=1,2$) and the equation 
\be\label{PME_m_weak}
-\int_s^t\int_\Om\rho^i\partial_t\phi\dd x\dd \t+\int_s^t\int_\Om \v^i\cdot\nabla\phi\rho^i\dd x\dd \t=\int_\Om\rho^i_s(x)\phi(s,x)\dd x-\int_\Om\rho^i_t(x)\phi(t,x)\dd x,
\tag{Weak}
\ee
holds true for all $\phi \in C^1([0,T]\times\Om)$ and for all $0\le s< t\le T,$ where 
$$\v^i:=\nabla p+\nabla\Phi_i.$$ 


\medskip

Similarly, by a \emph{weak solution} of \eqref{eq:PME_infty} we mean a triple $(\rho^{1,\infty},\rho^{2,\infty},p^\infty)$ such that 
$\rho^{i,\infty}\in AC^2([0,T];\\ \sP^{M_i}(\Om))\cap L^{\infty}([0,T]\times\Om)$, $i=1,2$ with $\|\rho^{1,\infty}+\rho^{2,\infty}\|_{L^\infty}\le 1$, $p^\infty\in L^2([0,T];H^1(\Om)),$ $p^\infty\ge 0$ and $p^\infty(1-\rho^{1,\infty}-\rho^{2,\infty})=0$ a.e. in $[0,T]\times\Om$. Moreover $\rho^i|_{t=0}=\rho^i_0$ ($i=1,2$) and the equation \eqref{PME_m_weak} 
holds true with $p$ replaced by $p^\infty$ for all $\phi \in C^1([0,T]\times\Om)$ and for all $0\le s< t\le T.$


We underline that the above weak formulations encode in particular no-flux boundary conditions on $[0,T]\times\partial\Om$.
\end{definition}

\begin{remark}
\begin{itemize}
\item[(a)] Notice that by density arguments, in the definition of the weak solution of \eqref{eq:PME_m} one can consider $\phi\in W^{1,1}([0,T];L^q(\Om))\cap L^q([0,T];W^{1,q}(\Om))$ where $q=\max\{r',(2m-1)'\}$ and in the case of \eqref{eq:PME_infty} one can consider test functions in $W^{1,1}([0,T];L^1(\Om))\cap L^2([0,T];H^1(\Om)).$ 
\item[(b)] Also, by the fact that we impose that the densities are absolutely continuous curves in the Wasserstein space\footnote{See the Appendix on optimal transportation}, imposing the initial conditions is meaningful.
\item[(c)] 
The uniqueness question of weak solutions seems to be very delicate and challenging. 
\end{itemize}
\end{remark}

\subsection{Interpolations between the densities} Let $m\in(1,+\infty]$ and let us consider $(\rho^1_0,\rho^2_0)$ and $\Phi_1$ and $\Phi_2$ satisfying the hypotheses \eqref{hyp:rho} and \eqref{hyp:phi} respectively. We consider also $T>0$ a fixed time horizon, a time step $\t>0$ and $N\in\N$ such that $N\t=T$ and the densities $(\rho^{1,\t}_k,\rho^{2,\t}_k)_{k=0}^N$ obtained via the \eqref{gf:tau} scheme starting from $(\rho^1_0,\rho^2_0).$ We denote the optimal transport maps and the corresponding Kantorovich potentials between two consecutive densities $\rho^{i,\t}_{k+1}$ and $\rho^{i,\t}_k$ by $T^i_k$ and $\vphi^i_k$ respectively ($k\in\{0,\dots,N-1\}$, $i=1,2$).
If $m=\infty$, we consider also the pressure variables $p^\t_k$ ($k\in\{1,\dots,N\}$) constructed as in Lemma \ref{lem:opt_cond_infty}.

Since $\frac{\nabla\vphi^i_k}{\t}=\frac{\id-T_k^i}{\t}$ can be seen as a discrete velocity (displacement divided by time), it is reasonable to define the discrete velocity of the particles of the $i^{th}$ fluid located at $x\in\Om$ (for a.e. $x\in\Om$) as
\be\label{def:disc_vel}
\v_k^{i,\t}(x):=\left\{
\ba{ll}
-\frac{m}{m-1}\nabla(\rho_{k+1}^{1,\t}(x)+\rho^{2,\t}_{k+1}(x))^{m-1}-\nabla\Phi_i(x), & \text{if } m\in(1,+\infty),\\[5pt]
-\nabla p^\t_{k+1}(x)-\nabla\Phi_i(x), & \text{if\ } m=\infty.
\ea
\right.
\ee

As  technical tools, we shall consider continuous and piecewise constant interpolations between the discrete densities. We will also work with the associated velocities and momenta. These constructions and the estimates on them are standard for experts and are very similar to the ones from \cite[Chapter 8.3]{OTAM} and from \cite{MauRouSan1}. We refer to \cite{San} as well, as an overview of these techniques. 

{\it Continuous interpolations}. Using McCann's interpolation -- as it is done for instance in \cite[Chapter 8.3]{OTAM} -- we can consider families of continuous interpolations $[0,T]\ni t\mapsto (\rho^{1,\t}_t,\rho^{2,\t}_t)\in \sP^{M_1}(\Om)\times\sP^{M_2}(\Om)$ between the discrete in time densities parametrized with $\t>0.$ We denote the corresponding time dependent families of velocities and momenta by $\v^{i,\t},\E^{i,\t}$.
%

It is worth to notice that the above construction implies in particular that $(\rho^{i,\t},\E^{i,\t})$ ($i=1,2$) solves the continuity equation 
\be\label{eq:cont_tau}
\partial_t\rho^{i,\t}+\diver \E^{i,\t}=0.
\ee
on $[0,T]\times\Om$ in the weak sense, i.e. 
\be\label{eq:cont_weak}
\int_s^t\int_\Om\rho^i\partial_t\phi\dd x\dd \t+\int_s^t\int_\Om \E^i\cdot\nabla\phi\dd x\dd \t=-\int_\Om\rho^i_s(x)\phi(s,x)\dd x+\int_\Om\rho^i_t(x)\phi(t,x)\dd x
\ee
 for all $\phi\in C^1([0,T]\times\Om)$ and $0\le s<t\le T$.

{\it Piecewise constant interpolations}. We consider a second family of interpolations, simply taking
\be\label{interp:const}
\tilde\rho^{i,\t}_t:=\rho^{i,\t}_{k+1},\ \tilde \v^{i,\t}_t:=\v^{i,\t}_k,\ {\rm{and}}\  \tE_t^{i,\t}:= \tilde\rho_t^{i,\t}\tilde \v_t^{i,\t}\;\; {\rm{for}\ }t\in[k\t,(k+1)\t).
\ee
We consider the piecewise constant interpolation for the pressure variable (see Lemma \ref{lem:opt_cond_infty}) as well, i.e. 
\be\label{def:press_interp}
\tilde p^\t_t:=\left\{
\ba{ll}
(C_1-\Phi_1-\vphi^1_k/\t)_+, & \text{in } \{\rho^{1,\t}_{k+1}>0\},\\[5pt]
(C_2-\Phi_2-\vphi^2_k/\t)_+, & \text{in } \{\rho^{2,\t}_{k+1}>0\},\\[5pt]
0, & \text{in } \Om\setminus\left(\{\rho^{1,\t}_{k+1}>0\}\cup\{\rho^{2,\t}_{k+1}>0\}\right),
\ea
\right.
\ {\rm{for\ }} t\in[k\t,(k+1)\t),k\in\{0,\dots,N-1\}.
\ee
In addition we set $\tilde\br_t^\t:=(\tilde\rho_t^{1,\t},\tilde\rho_t^{2,\t})$ for all $t\in[0,T].$ We remark that by construction one has $\tilde\rho^{i,\t}_t=\rho^{i,\t}_t$ for $t=k\t,$ $k\in\{1,\dots N\}.$ 

\subsection{A priori estimates for the interpolations}
We discuss now some estimates on the interpolations that will be useful to pass to the limit as $\t\da 0.$ In general, all the constants in the estimates depend on the data $\rho^1_0,\rho^2_0,\Phi_1,\Phi_2, T$ and $m$, however it will be especially important to keep track the precise dependence of them on $m$ (in particular we use these estimates also in the limiting procedure when $m\to+\infty$). To highlight this dependence, we denote the constants as $C(m)$.

\begin{lemma}
For any $m\in(1,+\infty]$, $\t>0$ and any $k\in\{0,\dots,N-1\}$ one has 
\be
\frac{1}{2\t}\sum_{k=0}^{N-1}\bW_2^2(\br_{k+1}^\t,\br_{k}^\t)\le \cF_m(\br_0^\t)+\cG(\br_0^\t)-\cF_m(\br_N^\t)-\cG(\br_N^\t),
\ee
and
\be\label{estim:Lm}
\|\rho^{1,\t}_{k+1}+\rho^{2,\t}_{k+1}\|_{L^m(\Om)}\le C_1(m),
\ee
where 
\be\label{const:Lmbound}
C_1(m):=\left\{
\ba{ll}
\left((2m-2)\left(M_1\|\Phi_1\|_{L^\infty(\Om)}+M_2\|\Phi_2\|_{L^\infty(\Om)}\right) +\|\rho^{1}_{0}+\rho^{2}_{0}\|_{L^m(\Om)}^m\right)^{1/m}, & {\rm{if\ }} m<\infty,\\
1, & {\rm{if\ }} m=\infty.
\ea
\right.
\ee
\end{lemma}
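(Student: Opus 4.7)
The plan is to derive both estimates from the most elementary ingredient available: the optimality of $\br_{k+1}^\tau$ in the variational problem \eqref{gf:tau}, used against the trivial competitor $\br_k^\tau$ (for which the Wasserstein cost vanishes). That comparison gives, for each $k\in\{0,\dots,N-1\}$,
\be\label{plan:basic}
\cF_m(\br_{k+1}^\tau)+\cG(\br_{k+1}^\tau)+\frac{1}{2\tau}\bW_2^2(\br_{k+1}^\tau,\br_k^\tau)\;\le\;\cF_m(\br_k^\tau)+\cG(\br_k^\tau).
\ee
For the first claim I would rearrange \eqref{plan:basic} to isolate the Wasserstein term and sum over $k=0,\dots,N-1$; the right-hand side telescopes and the announced estimate follows at once. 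This argument is valid for all $m\in(1,+\infty]$ without modification.

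For the $L^m$-bound when $m=\infty$ there is nothing to prove: since $\cF_\infty(\br_0^\tau)<+\infty$ by \eqref{hyp:rho}, iterating \eqref{plan:basic} shows $\cF_\infty(\br_{k+1}^\tau)<+\infty$, which by definition of $\cF_\infty$ means $\rho^{1,\tau}_{k+1}+\rho^{2,\tau}_{k+1}\le 1$ a.e., giving the constant $1$. For $m\in(1,+\infty)$ I would iterate \eqref{plan:basic} from $0$ to $k+1$ (the Wasserstein terms are nonnegative and can be dropped) to obtain
$$
\cF_m(\br_{k+1}^\tau)\;\le\;\cF_m(\br_0^\tau)+\cG(\br_0^\tau)-\cG(\br_{k+1}^\tau),
$$
and then bound $\cG(\br_0^\tau)-\cG(\br_{k+1}^\tau)$ by the crude pointwise estimate
$$
|\cG(\br)|\;\le\;M_1\|\Phi_1\|_{L^\infty(\Om)}+M_2\|\Phi_2\|_{L^\infty(\Om)},
$$
valid for any admissible $\br$ under \eqref{hyp:phi}. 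Substituting the definition \eqref{def:F_m} and multiplying through by $m-1$ yields
$$
\|\rho^{1,\tau}_{k+1}+\rho^{2,\tau}_{k+1}\|_{L^m(\Om)}^m\;\le\;\|\rho^1_0+\rho^2_0\|_{L^m(\Om)}^m+(2m-2)\bigl(M_1\|\Phi_1\|_{L^\infty}+M_2\|\Phi_2\|_{L^\infty}\bigr),
$$
and taking $m$-th roots gives exactly the constant $C_1(m)$ defined in \eqref{const:Lmbound}.

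There is no real obstacle: both estimates are direct consequences of the JKO optimality and the trivial $L^\infty$-bound on $\cG$. The only point worth noting is that one must be slightly careful not to iterate the telescoping and the $L^m$-bound together in a way that lets the drift contribution accumulate with $k$; the $L^\infty$-estimate on $\cG$ keeps the bound uniform in $k$ (and hence in $\tau$), which is crucial for subsequent compactness arguments as $\tau\da 0$ and, later, as $m\to+\infty$.
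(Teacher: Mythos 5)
Your proof is correct and is exactly the approach the paper takes (which the paper omits as "immediate by the optimality of $\br_{k+1}^\tau$ w.r.t.\ $\br_k^\tau$"): compare the minimizer against the previous iterate, telescope for the first estimate, drop the Wasserstein terms and use the uniform $L^\infty$ bound on $\cG$ for the second. The arithmetic checks out, including the factor $2(m-1)=2m-2$ after multiplying through by $m-1$.
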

\begin{proof}
The proofs of both inequalities are immediate by the optimality of $\br_{k+1}^\t$ w.r.t. $\br_k^\t$ in \eqref{gf:tau}. So we omit them. 
\end{proof}

\begin{corollary}\label{cor:bdd}
Hypotheses \eqref{hyp:phi} and \eqref{hyp:rho} imply that
$$\frac{1}{\t}\sum_{k=0}^{N-1}\left(W_2^2(\rho_k^{1,\t},\rho_{k-1}^{1,\t})+W_2^2(\rho_k^{2,\t},\rho_{k-1}^{2,\t})\right)\le C_2(m),$$
where 
\be\label{const:metric_der}
C_2(m):=\left\{
\ba{ll}
\frac{2}{m-1}\|\rho_0^1+\rho_0^2\|_{L^m}^m+4M_1\|\Phi_1\|_{L^\infty}+4M_2\|\Phi_2\|_{L^\infty}, & {\rm{if\ }} m\in(1,+\infty),\\[5pt]
4M_1\|\Phi_1\|_{L^\infty}+4M_2\|\Phi_2\|_{L^\infty}, & {\rm{if\ }} m=+\infty.
\ea
\right.
\ee
is independent of $\t.$
\end{corollary}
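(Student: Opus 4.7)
The plan is to derive this bound as an almost immediate corollary of the telescoping estimate established in the preceding lemma,
$$\frac{1}{2\t}\sum_{k=0}^{N-1}\bW_2^2(\br_{k+1}^\t,\br_{k}^\t)\le \cF_m(\br_0^\t)+\cG(\br_0^\t)-\cF_m(\br_N^\t)-\cG(\br_N^\t),$$
by multiplying both sides by $2$ and controlling the right-hand side in terms of the data. Since $\bW_2^2=W_2^2(\cdot,\cdot)+W_2^2(\cdot,\cdot)$ on the two components, the left-hand side is, up to a trivial reindexing of $k$, exactly the quantity appearing in the corollary.

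First I would discard the term $-\cF_m(\br_N^\t)$, noting that $\cF_m(\br_N^\t)\ge 0$ for every $m\in(1,+\infty]$: for finite $m$ the integrand $(\rho^1+\rho^2)^m$ is nonnegative by definition \eqref{def:F_m}, while for $m=\infty$ the pointwise constraint $\rho^{1,\t}_N+\rho^{2,\t}_N\le 1$ is preserved along the scheme (Theorem~\ref{thm:reg_1step}), so $\cF_\infty(\br_N^\t)=0$.

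Next, using mass conservation $\|\rho^{i,\t}_k\|_{L^1(\Om)}=M_i$ together with $\Phi_1,\Phi_2\in W^{1,\infty}(\Om)$ from \eqref{hyp:phi}, one has the pointwise bound $|\cG(\br_k^\t)|\le M_1\|\Phi_1\|_{L^\infty}+M_2\|\Phi_2\|_{L^\infty}$, hence
$$\cG(\br_0^\t)-\cG(\br_N^\t)\le 2\bigl(M_1\|\Phi_1\|_{L^\infty}+M_2\|\Phi_2\|_{L^\infty}\bigr).$$
For the initial free energy, hypothesis \eqref{hyp:rho} yields $\cF_m(\br_0^\t)\le \frac{1}{m-1}\|\rho^1_0+\rho^2_0\|_{L^m}^m$ when $m<\infty$, while $\cF_\infty(\br_0^\t)=0$ when $m=\infty$ thanks to the $L^\infty$ admissibility of the initial data. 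Assembling these three inequalities and multiplying by $2$ produces precisely the constant $C_2(m)$ from \eqref{const:metric_der}, in both regimes of $m$.

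I do not anticipate any genuine obstacle here; the corollary is essentially bookkeeping on top of the lemma, the only small care being to treat the cases $m<\infty$ and $m=\infty$ separately so that the $L^m$ term in $C_2(m)$ is replaced by $0$ when it formally degenerates at $m=+\infty$.
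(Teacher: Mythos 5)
Your proof is correct and coincides with the argument the paper leaves implicit: multiply the telescoping inequality from the preceding lemma by two, drop $-\cF_m(\br_N^\tau)\le 0$, and bound the $\cF_m(\br_0^\tau)$ and $\cG$ terms directly from \eqref{hyp:rho} and \eqref{hyp:phi}. The bookkeeping reproduces $C_2(m)$ exactly in both regimes, and the reindexing remark is the right way to read the (slightly off) summation indices in the corollary's statement.
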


\begin{lemma}[Bounds for $\rho^{i,\t}$, $\v^{i,\t}$ and $\E^{i,\t}$]\label{lem:estimates}
Assume that we constructed the discrete densities $\rho_k^{i,\t}$ for $\t>0$, $k\in\{0,\dots,N\}$ and $i=1,2$. Let $\rho^{i,\t}$ be the continuous interpolations and let $\v^{i,\t}$ and $\E^{i,\t}$ be the associated velocity field and momentum variables respectively. Then
\begin{itemize}
\item[(1)] $\rho^{i,\t}$ is  bounded in $AC^2([0,T];(\sP^{M_i}(\Om),W_2))$ uniformly in $\t>0$;
\item[(2)] $\v^{i,\t}$ is bounded in $L^2([0,T]; L^2_{\rho^{i,\t}}(\Om;\R^d))$ uniformly in $\t>0$;
\item[(3)] $\E^{i,\t}$ and $\tE^{i,\t}$ are bounded in $\sM^d([0,T]\times\Om)$ uniformly in $\t>0$.
\end{itemize} 
\end{lemma}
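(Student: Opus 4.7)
The plan is to derive all three bounds directly from the master inequality in Corollary~\ref{cor:bdd}, combined with the classical properties of piecewise McCann interpolations on a Wasserstein geodesic. No new ingredient beyond these should be required; the main care lies in keeping track of which density is paired with which velocity.

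For (1), I would use that by construction the restriction of $\rho^{i,\tau}$ to each subinterval $[k\tau,(k+1)\tau]$ is a constant-speed $W_2$-geodesic joining $\rho^{i,\tau}_k$ to $\rho^{i,\tau}_{k+1}$. Its metric derivative is therefore piecewise constant and equal to $W_2(\rho^{i,\tau}_k,\rho^{i,\tau}_{k+1})/\tau$ on $(k\tau,(k+1)\tau)$. Squaring and integrating in time gives
$$
\int_0^T|(\rho^{i,\tau})'|^2(t)\dd t=\frac{1}{\tau}\sum_{k=0}^{N-1}W_2^2(\rho^{i,\tau}_{k+1},\rho^{i,\tau}_k)\le C_2(m),
$$
by Corollary~\ref{cor:bdd}, which is exactly the $AC^2$ bound claimed.

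For (2), I would invoke the Benamou--Brenier characterization of the same geodesic: the associated velocity $\v^{i,\tau}_t$ is the minimal vector field for which $(\rho^{i,\tau},\v^{i,\tau})$ solves the continuity equation \eqref{eq:cont_tau}, and satisfies the constant-speed identity
$$
\int_\Om|\v^{i,\tau}_t|^2\rho^{i,\tau}_t\dd x=\frac{W_2^2(\rho^{i,\tau}_k,\rho^{i,\tau}_{k+1})}{\tau^2}\qquad\text{for }t\in[k\tau,(k+1)\tau].
$$
Integrating this over $t\in[0,T]$ produces the very same telescoping sum, hence the uniform bound $\|\v^{i,\tau}\|_{L^2_t L^2_{\rho^{i,\tau}}}^2\le C_2(m)$.

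For (3), since $\E^{i,\tau}=\rho^{i,\tau}\v^{i,\tau}$, Cauchy--Schwarz together with the mass-conservation identity $\int_\Om\rho^{i,\tau}_t\dd x=M_i$ gives
$$
\|\E^{i,\tau}\|_{\sM^d([0,T]\times\Om)}\le\int_0^T\!\!\int_\Om|\v^{i,\tau}|\rho^{i,\tau}\dd x\dd t\le\Bigl(TM_i\Bigr)^{1/2}\Bigl(C_2(m)\Bigr)^{1/2},
$$
via the bound obtained in step (2). The piecewise-constant momentum $\tE^{i,\tau}=\tilde\rho^{i,\tau}\tilde\v^{i,\tau}$ requires one extra observation: on $[k\tau,(k+1)\tau)$ the velocity $\v^{i,\tau}_k$ coincides, in view of Lemma~\ref{lem:opt_cond}(2) (resp.~Lemma~\ref{lem:opt_cond_infty}(2)), with $(\id-T_k^i)/\tau$ evaluated on $\rho^{i,\tau}_{k+1}$, so that $\|\v^{i,\tau}_k\|_{L^2_{\rho^{i,\tau}_{k+1}}}=W_2(\rho^{i,\tau}_k,\rho^{i,\tau}_{k+1})/\tau$. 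Applying Cauchy--Schwarz in space and then in the discrete time index gives
$$
\|\tE^{i,\tau}\|_{\sM^d}\le M_i^{1/2}\sum_{k=0}^{N-1}W_2(\rho^{i,\tau}_k,\rho^{i,\tau}_{k+1})\le M_i^{1/2}\Bigl(N\!\sum_{k=0}^{N-1}W_2^2(\rho^{i,\tau}_k,\rho^{i,\tau}_{k+1})\Bigr)^{1/2}\le(TM_iC_2(m))^{1/2}.
$$
I do not anticipate a substantive obstacle: the proof is routine once one accepts the standard geodesic identities for McCann's interpolation, and the only potentially confusing point is the Lagrangian bookkeeping in the piecewise-constant case, which is settled by the optimality condition identifying $\v^{i,\tau}_k$ with the discrete displacement $(\id-T^i_k)/\tau$.
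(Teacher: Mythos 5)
Your argument matches the paper's proof: parts (1) and (2) follow from the constant-speed geodesic identity $\|\v^{i,\tau}_t\|_{L^2_{\rho^{i,\tau}_t}}=|(\rho^{i,\tau})'|_{W_2}(t)$ together with Corollary~\ref{cor:bdd}, and part (3) follows from Cauchy--Schwarz in space (giving the factor $\sqrt{M_i}$) and then in time. Your additional care in unpacking the piecewise-constant case via $\v^{i,\tau}_k=(\id-T^i_k)/\tau$ is exactly what the paper compresses into ``the bound on $\tE^{i,\tau}$ rel[ies] on the same argument,'' so the two proofs are essentially identical.
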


\begin{proof}
For $\t>0,$ by construction $\rho^{i,\t}$ is a constant speed geodesic interpolation with the corresponding velocity field $\v^{i,\t}.$ This implies that 
$$\int_0^T\|\v^{i,\t}_t\|^2_{L^2_{\rho^{i,\t}_t}}\dd t=\int_0^T|(\rho^{i,\t})'|_{W_2}^2(t)\dd t=\sum_{k=1}^{N-1}\frac{1}{\t}W_2^2(\rho^{i,\t}_{k-1},\rho^{i,\t}_k)\le C_2(m).$$
Now, by Corollary \ref{cor:bdd} we obtain that (1)-(2) hold true. 

To estimate the total variation of $\E^{i,\t}$ we write
\begin{align*}
|\E^{i,\t}|([0,T]\times\Om)&=\int_0^T\int_\Om|\v_t^{i,\t}|\rho_t^{i,\t}\,\dd x\dd t\le\int_0^T\left(\int_\Om |\v_t^{i,\t}|^2\rho_t^{i,\t}\,\dd x\right)^\frac12\left(\int_\Om\rho_t^{i,\t}\,\dd x\right)^\frac12\,\dd t\\
&\le \sqrt{M_i}\sqrt{T}\left(\int_0^T\int_\Om|\v_t^{i,\t}|^2\rho_t^{i,\t}\,\dd x\dd t\right)^\frac12\le \sqrt{M_iTC_2(m)}.
\end{align*}  
In the last inequality we used the previously obtained bound on $\v^{i,\t}.$ The bound on $\tE^{i,\t}$ rely on the same argument. 
\end{proof}

\begin{lemma}[Bounds on $\tilde p^\t$]\label{lem:press_bound}
Let us consider the piecewise constant interpolation $[0,T]\ni t\mapsto \tilde p^\t_t$ of the pressure variables defined in \eqref{def:press_interp}. Then $\tilde p^\t$ is bounded $L^2([0,T];H^1(\Om))$ independently of $\t>0.$
\end{lemma}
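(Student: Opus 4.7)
The plan is to obtain the bound by first controlling $\int_0^T \|\nabla \tilde p^\t_t\|_{L^2(\Om)}^2 \dd t$ uniformly in $\t$ via the JKO velocity estimates, and then upgrading to an $L^2$ bound on $\tilde p^\t$ itself through a Poincar\'e inequality that exploits the vanishing of the pressure on a set of uniform positive measure.

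For the gradient estimate, I would fix $t \in [k\t,(k+1)\t)$ and note that $\tilde p^\t_t$ is Lipschitz on $\Om$ (Lemma \ref{lem:opt_cond_infty}) and vanishes off of $\{\rho^{1,\t}_{k+1}+\rho^{2,\t}_{k+1}>0\}$, so $\nabla \tilde p^\t_t = 0$ a.e.\ on $\{\tilde p^\t_t = 0\}$. From the complementarity $\tilde p^\t_t(1-\rho^{1,\t}_{k+1}-\rho^{2,\t}_{k+1})=0$ one has $\rho^{1,\t}_{k+1}+\rho^{2,\t}_{k+1}=1$ a.e.\ on $\{\tilde p^\t_t > 0\}$, and hence $\max(\rho^{1,\t}_{k+1},\rho^{2,\t}_{k+1})\geq 1/2$ there. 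Splitting this set according to which density dominates and using the pointwise identity $\nabla \tilde p^\t_t = -\nabla\vphi^i_k/\t - \nabla\Phi_i$ valid $\rho^{i,\t}_{k+1}$-a.e.\ from Lemma \ref{lem:opt_cond_infty}(2),
$$
\int_\Om |\nabla \tilde p^\t_t|^2 \dd x \leq 2\sum_{i=1}^2\int_\Om |\nabla \tilde p^\t_t|^2 \rho^{i,\t}_{k+1}\dd x \leq 4\sum_{i=1}^2\left(\frac{W_2^2(\rho^{i,\t}_{k+1},\rho^{i,\t}_k)}{\t^2} + M_i\|\nabla\Phi_i\|_{L^\infty}^2\right).
$$
Integrating in $t$ over $[k\t,(k+1)\t)$ and summing in $k$, the metric-derivative sum is controlled by $C_2(\infty)$ through Corollary \ref{cor:bdd}, while the drift term contributes an overall factor $T\sum_i M_i\|\nabla \Phi_i\|_{L^\infty}^2$. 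This produces a uniform bound on $\|\nabla \tilde p^\t\|_{L^2([0,T]\times \Om)}$.

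To promote this to a full $H^1$ bound I would exploit that the zero set of $\tilde p^\t_t$ has positive measure uniformly in $\t$ and $t$. Indeed, $\rho^1+\rho^2\leq 1$ together with $\int_\Om (\rho^1+\rho^2)\dd x =M_1+M_2$ forces $|\{\rho^1+\rho^2=1\}|\leq M_1+M_2$, so by the structural assumption in \eqref{hyp:rho} one has $|\{\tilde p^\t_t = 0\}|\geq |\Om|-(M_1+M_2) > 0$. Combining Poincar\'e--Wirtinger for the fluctuation $\tilde p^\t_t - \overline{\tilde p^\t_t}$ with the Chebyshev-type inequality $|\overline{\tilde p^\t_t}|^2|\{\tilde p^\t_t = 0\}|\leq \|\tilde p^\t_t - \overline{\tilde p^\t_t}\|_{L^2}^2$ yields $\|\tilde p^\t_t\|_{L^2}\leq C(\Om,M_1,M_2)\|\nabla \tilde p^\t_t\|_{L^2}$ with a constant independent of $\t$ and $t$. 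Integrating in time and adding the previous gradient bound produces the desired $L^2([0,T];H^1(\Om))$ estimate.

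The main technical step I expect to require care is the claim that $\nabla \tilde p^\t_t = 0$ a.e.\ on $\{\tilde p^\t_t = 0\}$, which underpins restricting the gradient estimate to the set $\{\tilde p^\t_t >0\}$. This relies on the Lipschitz regularity of the pressure from Lemma \ref{lem:opt_cond_infty}, without which the distributional gradient could carry singular contributions along the boundary of $\{\rho^{1,\t}_{k+1}+\rho^{2,\t}_{k+1}>0\}$ that are not controlled by the $L^2_{\rho^i}$-velocity bound; the rest of the argument is a direct consequence of the JKO metric estimates and a standard weighted Poincar\'e inequality.
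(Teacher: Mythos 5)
Your proof is correct and follows essentially the same approach as the paper: both use the identity $\nabla p = -\nabla\vphi^i/\t - \nabla\Phi_i$ valid $\rho^i$-a.e., control the weighted integrals by the Wasserstein distance and the drift potential, and then upgrade to a full $H^1$ bound via a Poincar\'e inequality exploiting the positive measure of $\{\tilde p^\t_t = 0\}$ guaranteed by $\sL^d(\Om) > M_1+M_2$. The only cosmetic difference is that the paper passes directly from $\int_\Om|\nabla p|^2\dd x$ to $\int_\Om|\nabla p|^2(\rho^1+\rho^2)\dd x$ using $\rho^1+\rho^2=1$ on $\{p>0\}$ and $\nabla p=0$ a.e.\ elsewhere, whereas you use the $\max\geq 1/2$ splitting, which costs a harmless extra factor of $2$.
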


\begin{proof}
The proof is  similar to the ones in \cite[Proposition 6.13]{LabPhD} and \cite[Lemma 3.6]{MesSan}. We sketch it below.
Let us use the fact that $\nabla p^\t_{k}=-\nabla\vphi^i_k/\t-\nabla\Phi_i,$  $\rho^{i,\t}_{k+1}-\ae$, for all $k\in\{0,\dots,N-1\}$ where $\vphi^i_k$ is an optimal Kantorovich potential in the transport of $\rho^{i,\t}_{k+1}$ onto $\rho^{i,\t}_k.$ First, let us compute
\begin{align*}
\int_{\Om}|\nabla p^\t_{k}|^2\rho^i_{k+1}\dd x \le\frac{2}{\t^2}\int_{\Om}|\nabla\vphi^i_k|^2\rho^i_{k+1}\dd x+2\int_{\Om}|\nabla\Phi_i|^2\rho^i\dd x=\frac{2}{\t^2}W_2^2(\rho^{i,\t}_{k+1},\rho^{i,\t}_k)+2\|\nabla\Phi_i\|^2_{L^\infty(\Om)},
\end{align*} 
then adding up the two inequalities for $i=1,2$ (using the fact that $p^{\t}_k$ is supported on $\{\rho^1_{k+1}+\rho^2_{k+1}=1\}$), one finds
$$\int_{\Om}|\nabla p^\t_{k}|^2\dd x=\int_{\Om}|\nabla p^\t_{k}|^2(\rho^1_{k+1}+\rho^2_{k+1})\dd x\le\sum_{i=1}^2\left(\frac{2}{\t^2}W_2^2(\rho^{i,\t}_{k+1},\rho^{i,\t}_k)+2\|\nabla\Phi_i\|^2_{L^\infty(\Om)}\right).$$
Integrating in time $\|\nabla \tilde p^\t_t\|_{L^2(\Om)}$ using Corollary \ref{cor:bdd} one has that $\nabla \tilde p^\t\in L^2([0,T]\times\Om).$ Using the fact that $\sL^d(\{\tilde p^\t_t=0\})\ge\sL^d(\{\tilde\rho^{1,\t}_t+\tilde\rho^{1,\t}_t<1\})\ge \sL^d(\Om)-M_1-M_2>0$ for a.e. $t\in[0,T]$, one concludes by a suitable version of Poincar\'e's inequality that $\tilde p^\t$ uniformly bounded in $L^2([0,T];H^1(\Om))$ as desired.
\end{proof}

We show now gradient estimates, derived from the optimality conditions \eqref{opt:cond}. 
\begin{theorem}\label{thm:L2H1}
Let $m\in(1,+\infty)$. Then for the piecewise constant interpolation $\tilde\rho^{i,\t}$ ($i=1,2$) introduced in \eqref{interp:const} one has
\be\label{estim:L2grad}
\|\nabla(\tilde\rho^{1,\t}+\tilde\rho^{2,\t})^{m-1/2}\|_{L^2([0,T]\times\Om)}\le C_3(m),\ \ 
\|(\tilde\rho^{1,\t}+\tilde\rho^{2,\t})^{m-1/2}\|_{L^2([0,T]\times\Om)}\le C_4(m),
\ee
and 
\be\label{estim:LqLm}
\|\tilde\rho^{1,\t}+\tilde\rho^{2,\t}\|_{L^q([0,T]; L^m(\Om))}\le C_5(q,m),\ \forall q\ge 1,
\ee
where $C_3(m),C_4(m),C_5(q,m)>0$ are constants independent of $\t>0.$ The first two bounds imply in particular that
\be\label{estim:L2H1}
\|(\tilde\rho^{1,\t}+\tilde\rho^{2,\t})^{m-1/2}\|_{L^2([0,T];H^1(\Om))}\le \left(C_3(m)^2+C_4(m)^2\right)^{1/2}.
\ee
\end{theorem}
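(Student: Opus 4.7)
The plan is to bootstrap the one-step estimate \eqref{estim:gradient_1step} obtained in Theorem~\ref{thm:reg_1step} to the full time-discrete trajectory via summation in $k$, and then to combine it with the uniform $L^m$ bound \eqref{estim:Lm} through the same Poincar\'e argument used in the proof of Theorem~\ref{thm:reg_1step}. No fundamentally new idea is needed beyond tracking how the constants depend on $m$.

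First, I would apply \eqref{estim:gradient_1step} at each step of the scheme: for every $k \in \{0,\dots,N-1\}$,
\[
\int_\Om\bigl|\nabla(\rho^{1,\t}_{k+1}+\rho^{2,\t}_{k+1})^{m-1/2}\bigr|^2\dd x \le \frac{2(m-1/2)^2}{m^2}\left(\frac{1}{\t^2}\bW_2^2(\br_{k+1}^\t,\br_{k}^\t)+M_1\|\nabla\Phi_1\|_{L^\infty}^2+M_2\|\nabla\Phi_2\|_{L^\infty}^2\right).
\]
Multiplying through by $\t$ and summing $k$ from $0$ to $N-1$, the first term on the right reduces to $\frac{1}{\t}\sum_k \bW_2^2(\br^\t_{k+1},\br^\t_k)$, which is controlled by $C_2(m)$ via Corollary~\ref{cor:bdd}; the remaining two terms telescope into $T(M_1\|\nabla\Phi_1\|_{L^\infty}^2+M_2\|\nabla\Phi_2\|_{L^\infty}^2)$ after using $N\t=T$. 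Since $\tilde\rho^{i,\t}$ is piecewise constant on the intervals $[k\t,(k+1)\t)$ with values $\rho^{i,\t}_{k+1}$, the resulting sum is exactly $\|\nabla(\tilde\rho^{1,\t}+\tilde\rho^{2,\t})^{m-1/2}\|^2_{L^2([0,T]\times\Om)}$, which yields the first bound with
\[
C_3(m)^2 := \frac{2(m-1/2)^2}{m^2}\bigl(C_2(m) + T\,(M_1\|\nabla\Phi_1\|_{L^\infty}^2+M_2\|\nabla\Phi_2\|_{L^\infty}^2)\bigr).
\]

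Next, at each fixed time $t$ I would invoke the same Poincar\'e-type step as in Theorem~\ref{thm:reg_1step}: using \eqref{estim:Lm} and $\sL^d(\Om)<\infty$, the average of $(\tilde\rho^{1,\t}_t+\tilde\rho^{2,\t}_t)^{m-1/2}$ is uniformly bounded in $t$, so
\[
\|(\tilde\rho^{1,\t}_t+\tilde\rho^{2,\t}_t)^{m-1/2}\|_{L^2(\Om)} \le C_\Om \|\nabla(\tilde\rho^{1,\t}_t+\tilde\rho^{2,\t}_t)^{m-1/2}\|_{L^2(\Om)} + C_1(m)^{m-1/2}\sL^d(\Om)^{1/(2m)-1/2}.
\]
Squaring and integrating in $t$ on $[0,T]$, the first term is bounded by $C_\Om^2 C_3(m)^2$ from the previous step and the second by a constant times $T$, giving the second bound with an explicit $C_4(m)$. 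The $L^2([0,T];H^1(\Om))$ bound \eqref{estim:L2H1} is then obtained by simply combining the two estimates.

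Finally, \eqref{estim:LqLm} is the easiest of the three: by construction $\|\tilde\rho^{1,\t}_t+\tilde\rho^{2,\t}_t\|_{L^m(\Om)}=\|\rho^{1,\t}_{k+1}+\rho^{2,\t}_{k+1}\|_{L^m(\Om)}\le C_1(m)$ for $t\in[k\t,(k+1)\t)$ by \eqref{estim:Lm}, so the map $t\mapsto \|\tilde\rho^{1,\t}_t+\tilde\rho^{2,\t}_t\|_{L^m(\Om)}$ is uniformly bounded pointwise in $t$, which immediately gives $C_5(q,m):=T^{1/q}C_1(m)$. I do not expect any genuine obstacle here; the only point to be careful about is that the single-step estimate \eqref{estim:gradient_1step} contains a factor $1/\t^2$ in front of $\bW_2^2$, which is precisely what converts, after multiplication by $\t$ and summation, into the finite metric-derivative sum of Corollary~\ref{cor:bdd} — this is the one place where the gradient flow structure is genuinely used, and tracking the $m$-dependence of all constants through this calculation is the main bookkeeping step.
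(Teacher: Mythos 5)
Your proposal is correct and follows essentially the same route as the paper: multiply the one-step gradient estimate \eqref{estim:gradient_1step} by $\t$, sum in $k$ using Corollary~\ref{cor:bdd} to obtain $C_3(m)$, feed that into the per-step Poincar\'e bound from Theorem~\ref{thm:reg_1step} (with the elementary inequality $(a+b)^2\le 2(a^2+b^2)$ to split the square) to obtain $C_4(m)$, and read off $C_5(q,m)=T^{1/q}C_1(m)$ directly from \eqref{estim:Lm}. The constants you identify match the paper's exactly.
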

\begin{proof}

We use the inequality \eqref{estim:gradient_1step}, writing for $(\rho_{k+1}^{1,\t},\rho^{2,\t}_{k+1})$, i.e.
\begin{align*}
\int_\Om\left|\nabla(\rho_{k+1}^{1,\t}+\rho^{2,\t}_{k+1})^{m-1/2}\right|^2\dd x&\le \frac{2(m-1/2)^2}{m^2}\left(\sum_{i=1}^2 \frac{1}{\t^2}W_2^2(\rho_{k+1}^{i,\t},\rho_{k}^{i,\t})+\sum_{i=1}^2M_i\|\nabla\Phi_i\|^2_{L^\infty}\right).
\end{align*}
Since the curves $\tilde \rho^{i,\t}$ ($i=1,2$) are piecewise constant interpolations, i.e.  $\tilde \rho^{i,\t}_t=\rho^{i,\t}_{k+1}$ for $t\in (k\t,(k+1)\t],$ one has
\begin{align*}
\int_0^T\int_\Om\left|\nabla(\tilde\rho_{t}^{1,\t}+\tilde \rho^{2,\t}_{t})^{m-1/2}\right|^2\dd x\dd t&=\t\sum_{k=0}^{N-1}\int_\Om\left|\nabla(\rho_{k+1}^{1,\t}+\rho^{2,\t}_{k+1})^{m-1/2}\right|^2\dd x\\
&\le \frac{2(m-1/2)^2}{m^2}\sum_{i=1}^2\sum_{k=0}^{N-1}\frac{1}{\t}W_2^2(\rho_{k+1}^{i,\t},\rho_{k}^{i,\t})\\
&+\frac{2(m-1/2)^2}{m^2}\t\sum_{i=1}^2\sum_{k=0}^{N-1}M_i\|\nabla\Phi_i\|^2_{L^\infty}\\
&\le\frac{2(m-1/2)^2}{m^2}\left(C_2(m)+T\sum_{i=1}^2M_i\|\nabla\Phi_i\|^2_{L^\infty}\right)=:C_3(m)^2,
\end{align*}
which implies \eqref{estim:L2grad}, with
\be\label{const:L2H1}
C_3(m):=\frac{\sqrt{2}(m-1/2)}{m}\left(C_2(m)+T\sum_{i=1}^2M_i\|\nabla\Phi_i\|^2_{L^\infty}\right)^{1/2}.
\ee 
Similarly, using the estimations from Theorem \ref{thm:reg_1step} and \eqref{estim:Lm}, we can write
\begin{align*}
\int_0^T\|(\tilde\rho^{1,\t}_t&+\tilde\rho^{2,\t}_t)^{m-1/2}\|_{L^2(\Om)}^2\dd t=\t\sum_{k=0}^{N-1}\|(\rho^{1,\t}_{k+1}+\rho^{2,\t}_{k+1})^{m-1/2}\|_{L^2(\Om)}^2\\
&\le \t\sum_{k=0}^{N-1}\left(C_\Om\|\nabla(\rho^{1,\t}_{k+1}+\rho^{2,\t}_{k+1})^{m-1/2}\|_{L^2(\Om)}+\|\rho^{1,\t}_{k+1}+\rho^{2,\t}_{k+1}\|_{L^m(\Om)}^{m-1/2} \sL^d(\Om)^{1/(2m)-1/2}\right)^2\\
&\le2\t \sum_{k=0}^{N-1}\left(C_\Om^2\|\nabla(\rho^{1,\t}_{k+1}+\rho^{2,\t}_{k+1})^{m-1/2}\|_{L^2(\Om)}^2 + \|\rho^{1,\t}_{k+1}+\rho^{2,\t}_{k+1}\|_{L^m(\Om)}^{2m-1} \sL^d(\Om)^{1/m-1}\right)\\
&\le 2\left(C_\Om^2 C_3(m)^2 +TC_1(m)^{2m-1} \sL^d(\Om)^{1/m-1}\right)
\end{align*}
Thus, the second estimation in \eqref{estim:L2grad} holds true with 
\be
C_4(m):=\sqrt{2}\left(C_\Om^2 C_3(m)^2 +TC_1(m)^{2m-1} \sL^d(\Om)^{1/m-1}\right)^{1/2}.
\ee
Using \eqref{estim:Lm}, for any $q\ge 1$ we can write similarly as before 
\begin{align*}
\int_0^T\|\tilde\rho^{1,\t}_t+\tilde\rho^{2,\t}_t\|_{L^m(\Om)}^q\dd t&=\t\sum_{k=0}^{N-1}\|\rho^{1,\t}_{k+1}+\rho^{2,\t}_{k+1}\|_{L^m(\Om)}^q \le T C_1(m)^q.
\end{align*}
So defining $C_5(q,m):=T^{1/q}C_1(m),$ one obtains the last estimation \eqref{estim:LqLm}
\end{proof}

In what follows -- using a refined version of the Aubin-Lions lemma -- we prove a strong compactness result for $\tilde\rho^{1,\t}+\tilde\rho^{2,\t}$  where $\tilde\rho^{1,\t}$ and $\tilde\rho^{2,\t}$ are the piecewise constant interpolations.

\begin{proposition}\label{prop:strong_comp}
Let $m\in(1,+\infty).$ Then the sequence of curves defined as $\tilde\rho^{1,\t_n}+\tilde\rho^{2,\t_n}$ (for any sequence $(\t_n)_{n\ge 0}$ of positive reals that converges to 0) is strongly pre-compact in $L^{2m-1}([0,T]\times\Om).$
\end{proposition}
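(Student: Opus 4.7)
The plan is to reduce the claim to the strong $L^2$-compactness of the ``linearised'' family $u^{\tau_n} := (\tilde\rho^{1,\tau_n}+\tilde\rho^{2,\tau_n})^{m-1/2}$ via the refined Aubin--Lions lemma of Appendix~\ref{sec:appendix_aubin-lions}, and then to upgrade this to $L^{2m-1}$-compactness of $\sigma^{\tau_n}:=\tilde\rho^{1,\tau_n}+\tilde\rho^{2,\tau_n}$ through a.e.\ convergence combined with convergence of norms.

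The two inputs for Aubin--Lions would be verified as follows. \emph{Spatial regularity}: Theorem~\ref{thm:L2H1} provides a uniform bound $\|u^\tau\|_{L^2([0,T];H^1(\Om))}\leq (C_3(m)^2+C_4(m)^2)^{1/2}$, and Rellich--Kondrachov makes the sublevel sets of $v\mapsto\|v\|_{H^1(\Om)}^2$ relatively compact in $L^2(\Om)$. \emph{Temporal regularity}: I would compare $\tilde\rho^{i,\tau}$ with the continuous McCann interpolation $\rho^{i,\tau}$. By Corollary~\ref{cor:bdd} and Lemma~\ref{lem:estimates}(1), $\rho^{i,\tau}$ is $1/2$-Hölder in $(W_2,\sP^{M_i}(\Om))$ with constant independent of $\tau$, and within each sub-interval $[k\tau,(k+1)\tau)$ one has $W_2(\rho^{i,\tau}_t,\tilde\rho^{i,\tau}_t)=O(\sqrt{\tau})$. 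Hence $W_2(\tilde\rho^{i,\tau}_s,\tilde\rho^{i,\tau}_t)\leq C\sqrt{|t-s|+\tau}$, and by the standard sub-additivity $W_2^2(\mu_1+\mu_2,\nu_1+\nu_2)\leq W_2^2(\mu_1,\nu_1)+W_2^2(\mu_2,\nu_2)$ of optimal transport for sums, the sum $\sigma^\tau$ inherits an integrated $W_2$-modulus of continuity of the form $\int_0^{T-h}W_2^2(\sigma^\tau_{t+h},\sigma^\tau_t)\dd t\leq C(h+\tau)$, which is exactly the weak time-equicontinuity input required.

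The refined Aubin--Lions lemma then yields, along a subsequence, $u^{\tau_n}\to u$ strongly in $L^2([0,T]\times\Om)$ and a.e. Setting $\sigma:=u^{2/(2m-1)}$ gives $\sigma^{\tau_n}=(u^{\tau_n})^{2/(2m-1)}\to\sigma$ a.e., and the identity $\|\sigma^{\tau_n}\|_{L^{2m-1}}^{2m-1}=\|u^{\tau_n}\|_{L^2}^2\to\|u\|_{L^2}^2=\|\sigma\|_{L^{2m-1}}^{2m-1}$, together with the uniform convexity of $L^{2m-1}([0,T]\times\Om)$, upgrades the a.e.\ convergence into strong convergence $\sigma^{\tau_n}\to\sigma$ in $L^{2m-1}$. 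The main obstacle I anticipate is the temporal estimate: the JKO scheme only naturally controls $W_2$-increments of each individual density, and the piecewise constant interpolation is \emph{not} an absolutely continuous curve in $W_2$; interpolating through the McCann curve and then invoking sub-additivity of $W_2^2$ for sums is what makes the argument go through, and is precisely the reason the \emph{refined} (rather than classical) Aubin--Lions is needed here.
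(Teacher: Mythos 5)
Your strategy is essentially the paper's: apply the refined Aubin--Lions lemma of Appendix~\ref{sec:appendix_aubin-lions} using the uniform $L^2([0,T];H^1(\Om))$ bound on $(\tilde\rho^{1,\tau}+\tilde\rho^{2,\tau})^{m-1/2}$ from Theorem~\ref{thm:L2H1} together with a $W_2$-modulus in time coming from Corollary~\ref{cor:bdd} and the subadditivity Lemma~\ref{lem:ineq_sum_1}. The paper packages this slightly differently --- it takes $B=L^{2m-1}(\Om)$, $\fF(\rho)=\|\rho^{m-1/2}\|_{H^1(\Om)}$ and $g=W_2$, so that Aubin--Lions acts directly on $\sigma^\tau:=\tilde\rho^{1,\tau}+\tilde\rho^{2,\tau}$, and then the uniform bound \eqref{estim:Lm} upgrades $\sM(0,T;L^{2m-1})$-precompactness to $L^{2m-1}([0,T]\times\Om)$-precompactness --- whereas you apply it to $u^\tau=(\sigma^\tau)^{m-1/2}$ in $B=L^2(\Om)$ and then recover $\sigma^\tau$ afterwards. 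If you go your route, you must make explicit what $g$ is on $L^2(\Om)$: the natural choice is $g(u,v):=W_2\bigl(u^{2/(2m-1)},v^{2/(2m-1)}\bigr)$, and you should check that this satisfies the separation hypothesis on $D(\fF_t)$. The final upgrade (a.e.\ convergence plus convergence of $L^{2m-1}$-norms, via Riesz/Brezis--Lieb or the Radon--Riesz property) is fine, but note that $\sM(0,T;B)$-convergence gives only convergence in measure; to extract a.e.\ convergence and then $L^2([0,T]\times\Om)$ convergence of $u^{\tau_n}$ you still need a uniform integrability input, which the paper supplies through \eqref{estim:Lm} (and Sobolev embedding on the $H^1$-bound).

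The concrete gap is in the time-equicontinuity estimate. Your bound $\int_0^{T-h}W_2^2(\sigma^\tau_{t+h},\sigma^\tau_t)\,\dd t\le C(h+\tau)$ is correct but too crude: Theorem~\ref{thm:aubin_refined} requires
\[
\lim_{h\da 0}\ \sup_{\tau}\ \int_0^{T-h} g\bigl(\sigma^\tau_{t+h},\sigma^\tau_t\bigr)\,\dd t = 0,
\]
and the dangling $\tau$ in $C(h+\tau)$ (or in $\sqrt{C(h+\tau)}$ after Cauchy--Schwarz) does not disappear when you take $\sup_\tau$ before $h\da 0$. Passing through the H\"older-in-$W_2$ estimate on the McCann interpolation throws away too much; you must use the piecewise constant structure directly. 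For $t\in[k\tau,(k+1)\tau)$ and $t+h\in[k'\tau,(k'+1)\tau)$ one has, by the triangle inequality,
\[
W_2\bigl(\tilde\sigma^\tau_{t+h},\tilde\sigma^\tau_t\bigr)\le \sum_{j:\, t< j\tau\le t+h} W_2\bigl(\sigma^\tau_{j},\sigma^\tau_{j+1}\bigr),
\]
and each index $j$ contributes only for $t\in(j\tau-h,j\tau]$, a set of measure $h$. Hence
\[
\int_0^{T-h}W_2\bigl(\tilde\sigma^\tau_{t+h},\tilde\sigma^\tau_t\bigr)\,\dd t \le h\sum_{j} W_2\bigl(\sigma^\tau_{j},\sigma^\tau_{j+1}\bigr)\le h\,\sqrt{N}\,\Bigl(\sum_j W_2^2\Bigr)^{1/2}\le h\sqrt{T\,C_2(m)},
\]
using Lemma~\ref{lem:ineq_sum_1}, Corollary~\ref{cor:bdd} and $N\tau=T$. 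This bound is proportional to $h$ uniformly in $\tau$, which is what the Aubin--Lions hypothesis actually needs. Replace your modulus estimate by this argument and the rest of your proof goes through.
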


\begin{proof}
We will use a refined version of the classical Aubin-Lions lemma to prove this result (see \cite{RosSav} and Theorem \ref{thm:aubin_refined}). Then we will argue as in in \cite{DiFMat}.

Let us set $B:=L^{2m-1}(\Om),$ $\fF:L^{2m-1}(\Om)\to[0,+\infty]$ defined as 
$$
\fF(\rho):=\left\{
\ba{ll}
\|\rho^{m-1/2}\|_{H^1(\Om)}, & \text{if } \rho\in H^1(\Om)\cap\sP^{M_1+M_2}(\Om),\\[5pt]
+\infty, & \text{otherwise}
\ea
\right.
$$
and $g:L^{2m-1}(\Om)\times L^{2m-1}(\Om)\to[0,+\infty]$ defined as 
$$
g(\mu,\nu):=\left\{
\ba{ll}
W_2(\mu,\nu), & \text{if } \mu,\nu \in \sP^{M_1+M_2}(\Om),\\[5pt]
+\infty, & \text{otherwise}.
\ea
\right.
$$
In this setting, $\left(\tilde\rho^{1,\t_n}+\tilde\rho^{2,\t_n}\right)_{n\ge 0}$ and $\fF$ satisfy the assumptions of Theorem \ref{thm:aubin_refined}. Indeed, from Theorem \ref{thm:L2H1} one has in particular that $\ds\int_0^T\|(\tilde\rho^{1,\t}_t+\tilde\rho^{2,\t}_t)^{m-1/2}\|_{H^1(\Om)}^2\dd t\le C_3(m)^2+C_4(m)^2.$ The injection $H^1(\Om)\hookrightarrow L^2(\Om)$ is compact, the injection $i:\eta\mapsto\eta^{\frac{2}{2m-1}}$ is continuous from $L^2(\Om)$ to $L^{2m-1}(\Om)$ and the sub-level sets of $\rho\mapsto\|\rho^{m-1/2}\|_{H^1(\Om)}$ are compact in $L^{2m-1}(\Om)$. 

Moreover, by Corollary \ref{cor:bdd}, Lemma \ref{lem:ineq_sum_1} and by the fact that $g$ defines a distance on  $D(\fF)$, one has that $g$ also satisfies the assumptions from Theorem \ref{thm:aubin_refined}, hence the implication of the theorem holds and one has that $\left(\tilde\rho^{1,\t_n}+\tilde\rho^{2,\t_n}\right)_{n\ge 0}$ is pre-compact in $\sM(0,T;L^{2m-1}).$ Finally, the uniform bound \eqref{estim:Lm} implies the strong pre-compactness of $\left(\tilde\rho^{1,\t_n}+\tilde\rho^{2,\t_n}\right)_{n\ge 0}$ in $L^{2m-1}([0,T]\times\Om).$
\end{proof}

\subsection{The limit systems as $\t\downarrow 0$}
We proceed with the final step of our scheme, i.e. as the time step size goes to zero, we show that 
along a subsequence the discrete solutions converge to yield a {\it very weak solution} of the PDE systems, in the sense of Definition \ref{def:weak_and_very_weak}.
We use the convention of $L^{2m-1}([0,T]\times\Om)=L^\infty([0,T]\times\Om)$ whenever $m=+\infty$. 

\begin{proposition}\label{prop:conv_unif-measure}
Let $m\in(1,+\infty]$ and let us consider any sequence  $(\t_n)_{n\ge 0}$ which converges to zero.
Then, along a subsequence the following holds:

\begin{itemize}
\item[(1)] There exists $\rho^i\in AC^2([0,T];(\sP^{M_i}(\Om),W_2))\cap L^{2m-1}([0,T]\times\Om)$ ($i=1,2$) s.t. $\rho^{i,\t_n}\to\rho^i$ and $\tilde\rho^{i,\t_n}\to\rho^i$ as $n\to +\infty$ uniformly on $[0,T]$ w.r.t. $W_2,$ in particular weakly$-\star$ in $\sP^{M_i}(\Om)$ for all $t\in[0,T].$ \item[(2)] There exists $\E^i\in\sM^d([0,T]\times\Om)$ ($i=1,2$) s.t. $\E^{i,\t_n}\weaklys\E^i$ and $\tE^{i,\t_n}\weaklys \E^i$ as $n\to+\infty.$ 
\end{itemize}
\end{proposition}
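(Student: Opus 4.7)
The plan is to establish (1) via a refined Arzelà--Ascoli theorem in the Wasserstein space combined with the weak/strong $L^{2m-1}$ compactness already secured by Proposition~\ref{prop:strong_comp}, and to establish (2) by Banach--Alaoglu in $\sM^d([0,T]\times\Om)$ together with a cell-by-cell comparison of the two momentum interpolations, both written as pushforwards of a common underlying vector measure.

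For (1), Lemma~\ref{lem:estimates}(1) gives a uniform-in-$\t$ $AC^2$ bound, hence uniform $1/2$-Hölder equicontinuity of $\{\rho^{i,\t_n}\}$ into the compact metric space $(\sP^{M_i}(\Om),W_2)$ via
\[
W_2(\rho^{i,\t}_s,\rho^{i,\t}_t)\le \sqrt{|t-s|}\Bigl(\int_s^t\|\v^{i,\t}_\sigma\|^2_{L^2_{\rho^{i,\t}_\sigma}}\dd\sigma\Bigr)^{1/2}.
\]
A refined metric Arzelà--Ascoli argument (see \cite{AmbGigSav}) extracts a subsequence $\rho^{i,\t_n}\to\rho^i$ uniformly in $W_2$, with $\rho^i\in AC^2([0,T];(\sP^{M_i}(\Om),W_2))$. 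To transfer the convergence to $\tilde\rho^{i,\t_n}$, observe that for $t\in[k\t,(k+1)\t)$ the continuous interpolation $\rho^{i,\t}_t$ lies on the $W_2$-geodesic between $\rho^{i,\t}_k$ and $\rho^{i,\t}_{k+1}=\tilde\rho^{i,\t}_t$, so
\[
\sup_{t\in[0,T]}W_2(\tilde\rho^{i,\t}_t,\rho^{i,\t}_t)\le \max_k W_2(\rho^{i,\t}_k,\rho^{i,\t}_{k+1})\le \sqrt{\t\,C_2(m)}\to 0
\]
by Corollary~\ref{cor:bdd}, so both interpolations share the limit $\rho^i$. For the integrability of $\rho^i$: when $m<\infty$, Proposition~\ref{prop:strong_comp} gives strong $L^{2m-1}$-precompactness of $\tilde\rho^{1,\t_n}+\tilde\rho^{2,\t_n}$ and hence a uniform $L^{2m-1}$ bound on each nonnegative $\tilde\rho^{i,\t_n}$; a further extraction yields a weak $L^{2m-1}$ limit that must coincide with $\rho^i$ by uniqueness of the distributional limit already pinned down by the $W_2$-convergence. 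When $m=\infty$, the pointwise bound $\tilde\rho^{1,\t}+\tilde\rho^{2,\t}\le 1$ passes to the limit by weak-$\star$ lower semicontinuity in $L^\infty$.

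For (2), Lemma~\ref{lem:estimates}(3) gives uniform bounds on $\E^{i,\t}$ and $\tE^{i,\t}$ in $\sM^d([0,T]\times\Om)$, so Banach--Alaoglu yields subsequential weak-$\star$ limits $\E^i$ and $\widetilde\E^i$ (along a common subsequence). To identify $\E^i=\widetilde\E^i$, denote by $S^i_k$ the forward optimal-transport map pushing $\rho^{i,\t}_k$ onto $\rho^{i,\t}_{k+1}$ (the a.e.\ inverse of $T^i_k$ on $\supp\rho^{i,\t}_{k+1}$), and set $M^s_k=(1-s)\id+sS^i_k$ with $s=(t-k\t)/\t$. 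McCann's displacement formula combined with the first-order optimality identity $\v^{i,\t}_k=(\id-T^i_k)/\t$ (from Lemmas~\ref{lem:opt_cond}--\ref{lem:opt_cond_infty}) yields the pushforward representations
\[
\E^{i,\t}_t=(M^s_k)_\#\Bigl(\rho^{i,\t}_k\cdot\tfrac{S^i_k-\id}{\t}\Bigr),\qquad \tE^{i,\t}_t=(S^i_k)_\#\Bigl(\rho^{i,\t}_k\cdot\tfrac{S^i_k-\id}{\t}\Bigr).
\]
Testing against $\phi\in C^1([0,T]\times\Om)$ and using $|M^s_k(x)-S^i_k(x)|=(1-s)|S^i_k(x)-x|$ together with Cauchy--Schwarz in $L^2_{\rho^{i,\t}_k}$ yields
\[
\int_0^T\Bigl|\int_\Om\phi\,\dd(\E^{i,\t}_t-\tE^{i,\t}_t)\Bigr|\dd t\le\frac{\|\nabla_x\phi\|_\infty}{2}\sum_{k=0}^{N-1}W_2^2(\rho^{i,\t}_k,\rho^{i,\t}_{k+1})\le\frac{\|\nabla_x\phi\|_\infty\,\t\,C_2(m)}{2},
\]
which vanishes as $\t\da 0$ by Corollary~\ref{cor:bdd}. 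Density of $C^1$ in $C_c([0,T]\times\Om)$ together with the uniform TV bound extends this to arbitrary continuous test functions, giving $\E^i=\widetilde\E^i$.

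The main obstacle lies in step (2): realizing both momentum interpolations as pushforwards of the \emph{same} underlying vector measure $\rho^{i,\t}_k\cdot(S^i_k-\id)/\t$ requires combining McCann's displacement formula with the first-order optimality condition in a careful way, and the resulting error estimate has to pick up exactly one power of $\t$ in front of the summable quantity $\sum_k W_2^2(\rho^{i,\t}_k,\rho^{i,\t}_{k+1})$ in order to vanish. Everything else reduces to a standard combination of metric Arzelà--Ascoli and weak-$\star$ compactness in the dual of $C_c$.
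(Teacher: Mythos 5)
Your proof is correct and follows essentially the same strategy as the paper: uniform $AC^2$ bounds plus a metric Arzel\`a--Ascoli argument give (1), with $\tilde\rho^{i,\t_n}$ inheriting the limit because it coincides with $\rho^{i,\t_n}$ at nodes and $\sup_t W_2(\tilde\rho^{i,\t}_t,\rho^{i,\t}_t)\to 0$, while Banach--Alaoglu plus identification of the two momentum limits give (2). The only place you depart from the paper's write-up is that you supply a self-contained pushforward argument via $S^i_k$ and the linear interpolant $M^s_k$ to show $\E^i=\tilde\E^i$, whereas the paper defers this step to \cite[Theorem 3.1]{MesSan}; your one-power-of-$\t$ error estimate $\frac{1}{2}\|\nabla_x\phi\|_\infty\sum_k W_2^2(\rho^{i,\t}_k,\rho^{i,\t}_{k+1})\le\frac{1}{2}\|\nabla_x\phi\|_\infty\,\t\,C_2(m)$ correctly captures the content of that citation.
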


\begin{proof}
In Lemma \ref{lem:estimates} we obtained uniform bounds on the metric derivative of the continuous interpolations $\rho^{i,\t_n}$ ($i=1,2$), which is enough to get compactness. More precisely there exist $[0,T]\ni t\mapsto\rho^i_t\in\sP^{M_i}(\Om),$ $i=1,2$ continuous curves such that (up to taking subsequences for $\t_n$) $\rho^{i,\t_n}_t\to\rho^i_t$ uniformly on $[0,T]$ w.r.t. $W_2$ as $n\to+\infty$, in particular weakly-$\star$ in $\sP^{M_i}(\Om)$ for all $t\in[0,T].$

The other interpolation $\tilde\rho^{i,\t_n}$ coincides with $\rho^{i,\t_n}$ at every node point $k\t,$ hence it is straightforward that (up to a subsequence taken for $\t_n$) it converges to the same curve $\rho^i$ uniformly on $[0,T]$ w.r.t. $W_2.$

Lemma \ref{lem:estimates} states also that $\E^{i,\t_n}$ and $\tE^{i,\t_n}$ are uniformly bounded sequences in $\sM^d([0,T]\times\Om),$ hence there exist $\E^i\in \sM^d([0,T]\times\Om)$ such that (up to a subsequence taken for $\t_n$) $\E^{i,\t_n}\weaklys \E^i$ and $\tE^{i,\t_n}\weaklys \E^i$ ($i=1,2$) in  $\sM^d([0,T]\times\Om)$ as $n\to+\infty$. The convergence of $\E^{i,\t_n}$ and $\tE^{i,\t_n}$ to the same limit $\E^i$ follows from the same argument as in the proof of \cite[Theorem 3.1]{MesSan}.
\end{proof}

These convergences imply that one can pass to the limit in the weak formulation \eqref{eq:cont_weak} as $\t\downarrow 0$ and obtain that $(\rho^i,\E^i)$ solves as well the continuity equation 
\be\label{eq:cont_lim}
\partial_t\rho^i+\diver \E^i=0
\ee
on $[0,T]\times\Om$ (with initial condition $\rho^i(0,\cdot)=\rho^i_0$)
in the same weak sense.

In particular, by Lemma \ref{lem:estimates} one has that the sequence $\left(\cB_2(\rho^{i,\t_n},\E^{i,\t_n})\right)_{n\in\N}$ is uniformly bounded for any positive vanishing sequence $(\t_n)_{n\in\N}$, where $\cB_2$ denotes the Benamou-Brenier action functional (see its precise definition and properties in Appendix \ref{sec:appendix_ot}). In particular, by the lower semicontinuity of this functional, there exists $\v^i$ such that $\v^i_t\in L^2_{\rho^i}(\Om;\R^d)$ for a.e. $t\in[0,T]$ and at the limit (as $\t\da 0$) $\E^i=\v^i\cdot\rho^i.$ This implies further that the equation \eqref{eq:cont_lim} has the form
\be\label{eq:cont_lim_2}
\partial_t\rho^i+\diver(\v^i\rho^i)=0.
\ee

\subsubsection{Precise form of the limit systems}
Now we shall work with the piecewise constant interpolations $\tilde\rho^{i,\t}, i=1,2$ and with the corresponding momenta $\tE^{i,\t}, i=1,2$ to determine more properties of the limit systems. The more precise convergence results are summarized in Theorem \ref{thm:precise} and \ref{thm:precise_infty} below.

\begin{theorem}\label{thm:precise} Let $m\in(1,+\infty)$ and let $\tilde\rho^{i,\t}, i=1,2$ be the piecewise constant interpolations between the densities $(\rho_{k}^{i,\t})_{k=0}^N$ and $\tE^{i,\t}, i=1,2$ the corresponding momentum variables. Taking any sequence $(\t_n)_{n\ge1}$ that goes to zero, the following holds along a subsequence:
\begin{itemize}
\item[(1)] $\left(\tilde\rho^{1,\t_n}+\tilde\rho^{2,\t_n}\right)_{n\ge0}$ converges strongly in $L^{2m-1}([0,T]\times\Om)$ to $\rho^1+\rho^2$;
\item[(2)] $\left(\tE^{1,\t_n}+\tE^{2,\t_n}\right)_{n\ge 0}, i=1,2$ converges in the sense of distributions to $-\nabla(\rho^1+\rho^2)^{m}-\nabla\Phi_1\rho^1-\nabla\Phi_2\rho^2$, i.e., 
\begin{equation}\label{eq:momentum_sum}
\v^1\rho^1+\v^2\rho^2=-\nabla (\rho^1+\rho^2)^m-\nabla\Phi_1\rho^1-\nabla\Phi_2\rho^2
\end{equation}
in the sense of distributions on $[0,T]\times\Om.$
\end{itemize}
\end{theorem}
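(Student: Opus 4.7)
The plan is to prove the two items in sequence, leveraging the strong $L^{2m-1}$ compactness of $\tilde\rho^{1,\tau_n}+\tilde\rho^{2,\tau_n}$ from Proposition~\ref{prop:strong_comp} together with the weak convergences already collected in Proposition~\ref{prop:conv_unif-measure}.

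For item (1) the argument is a consistency match. Proposition~\ref{prop:strong_comp} guarantees that, along a further subsequence (not relabelled), $\tilde\rho^{1,\tau_n}+\tilde\rho^{2,\tau_n}\to f$ strongly in $L^{2m-1}([0,T]\times\Om)$ for some limit $f$. On the other hand, Proposition~\ref{prop:conv_unif-measure} delivers $\tilde\rho^{i,\tau_n}\to\rho^i$ uniformly in $W_2$ for $i=1,2$; testing against arbitrary $\phi\in C_c^\infty([0,T]\times\Om)$ via Fubini and dominated convergence therefore yields convergence of $\tilde\rho^{1,\tau_n}+\tilde\rho^{2,\tau_n}$ to $\rho^1+\rho^2$ in the sense of distributions. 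Since strong $L^{2m-1}$ convergence also implies distributional convergence, the two limits must coincide, giving $f=\rho^1+\rho^2$.

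For item (2), the first step is to rewrite the sum of the momenta in a closed form. Using \eqref{def:disc_vel} and summing,
\begin{equation*}
\tilde{\E}^{1,\tau}_t+\tilde{\E}^{2,\tau}_t \;=\; -\tfrac{m}{m-1}\bigl(\tilde\rho^{1,\tau}_t+\tilde\rho^{2,\tau}_t\bigr)\nabla\bigl(\tilde\rho^{1,\tau}_t+\tilde\rho^{2,\tau}_t\bigr)^{m-1}\;-\;\sum_{i=1}^2 \tilde\rho^{i,\tau}_t\,\nabla\Phi_i,
\end{equation*}
and the chain-rule identity $\tfrac{m}{m-1}\,f\,\nabla f^{m-1}=\nabla f^m$, justified by the $L^2([0,T];H^1(\Om))$-bound on $(\tilde\rho^{1,\tau}+\tilde\rho^{2,\tau})^{m-1/2}$ from Theorem~\ref{thm:L2H1}, collapses the diffusion contribution to $-\nabla(\tilde\rho^{1,\tau}_t+\tilde\rho^{2,\tau}_t)^m$. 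Passing to the limit is then carried out term by term: the weak convergence $\tilde\rho^{i,\tau_n}\weakly\rho^i$ combined with $\nabla\Phi_i\in L^\infty(\Om)$ handles the drift contributions; for the diffusion term one invokes item (1), since the elementary inequality $|a^m-b^m|\le C(a^{m-1}+b^{m-1})|a-b|$, together with the uniform $L^{2m-1}$ estimate \eqref{estim:Lm} and H\"older's inequality, upgrades the strong $L^{2m-1}$ convergence of $\tilde\rho^{1,\tau_n}+\tilde\rho^{2,\tau_n}$ into strong $L^1$ convergence of its $m$-th power, so that $\nabla(\tilde\rho^{1,\tau_n}+\tilde\rho^{2,\tau_n})^m\to\nabla(\rho^1+\rho^2)^m$ in the sense of distributions.

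To close the argument we compare with the weak-$\star$ limit $\tilde{\E}^{i,\tau_n}\weaklys \E^i=\v^i\rho^i$ in $\sM^d([0,T]\times\Om)$ provided by Proposition~\ref{prop:conv_unif-measure} (combined with the Benamou--Brenier identification recalled right after its statement). Summing in $i$ and matching with the distributional limit just computed yields \eqref{eq:momentum_sum}. The main obstacle is precisely the diffusion term: only the \emph{sum} $\tilde\rho^{1,\tau_n}+\tilde\rho^{2,\tau_n}$ enjoys the Sobolev-type regularity and hence the strong compactness needed to pass to the limit inside the nonlinear map $f\mapsto\nabla f^m$, and no analogous manipulation is available that would decouple the two momenta. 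This is exactly why the continuum limit yields only the joint identity~\eqref{eq:momentum_sum} rather than separate equations for each $\v^i\rho^i$, motivating the stronger segregation-based analysis of Section~\ref{sec:segregated_1D}.
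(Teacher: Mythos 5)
Your proof is correct and takes essentially the same route as the paper: item (1) is obtained by matching the strong $L^{2m-1}$ limit from Proposition~\ref{prop:strong_comp} with the uniform-in-time $W_2$ limit from Proposition~\ref{prop:conv_unif-measure}, and item (2) proceeds by rewriting $\tE^{1,\t}+\tE^{2,\t}=-\nabla(\tilde\rho^{1,\t}+\tilde\rho^{2,\t})^m-\nabla\Phi_1\tilde\rho^{1,\t}-\nabla\Phi_2\tilde\rho^{2,\t}$, passing the diffusion term to the limit via the strong convergence of the sum, the drift terms via weak-$\star$ convergence, and identifying the result with the Benamou--Brenier limit $\E^i=\v^i\rho^i$. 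The only cosmetic differences are that you invoke the elementary inequality for $|a^m-b^m|$ to land in $L^1$ while the paper simply records strong convergence of the $m$-th power in $L^{2-1/m}$, and that you justify the chain rule explicitly; neither affects the substance.
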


\begin{proof}
Let us show $(1)$. Proposition \ref{prop:strong_comp} implies already that $\tilde\rho^{1,\t_n}+\tilde\rho^{2,\t_n}$ (up to some subsequence that we do not relabel) converges strongly in $L^{2m-1}([0,T]\times\Om).$ Also, by Proposition \ref{prop:conv_unif-measure} we have that $\tilde\rho^{i,\t_n}_t\weaklys\rho^i_t$ as $n\to\infty$ for all $t\in[0,T].$ Hence the limit of $(\tilde\rho^{1,\t_n}+\tilde\rho^{2,\t_n})_{n\ge0}$ is precisely $\rho^1+\rho^2$ and $\rho^i\in L^{2m-1}([0,T]\times\Om)\cap AC^2([0,T];(\sP^{M_i}(\Om),W_2)), i=1,2.$ 

We show now $(2).$ By definition of $\tE^{i,\t}$ on has that
$$\tE^{1,\t}+\tE^{2,\t}=-\nabla(\tilde\rho^{1,\t}+\tilde\rho^{2,\t})^m-\nabla\Phi_1\tilde\rho^{1,\t}-\nabla\Phi_2\tilde\rho^{2,\t}.$$
Since by $(1)$ $\tilde\rho^{1,\t_n}+\tilde \rho^{2,\t_n}\to \rho^1+\rho^2$ strongly in $L^{2m-1}([0,T]\times\Om)$ as $n\to+\infty,$ one has that $(\tilde\rho^{1,\t_n}+\tilde\rho^{2,\t_n})^m\to (\rho^1+\rho^2)^m$ strongly in $L^{2-\frac1m}([0,T]\times\Om)$ as $n\to +\infty.$ This, together with the weak$-\star$ convergence of $(\tilde \rho^{i,\t_n})_{n\ge 0}$ to $\rho^i$ implies the first part of the statement. On the other hand one has obtained already that $\tE^{i,\t_n}\weakly \E^i=\v^i\rho^i$ as $n\to +\infty,$ thus \eqref{eq:momentum_sum} follows as well.
\end{proof}

\begin{remark}\label{rmk:conv}
\begin{itemize}
\item[(1)] Let us underline the fact that it is unclear whether we could show a stronger version of Theorem \ref{thm:precise}(2), i.e. the convergence (up to passing to a subsequence) of $\left(\tE^{i,\t_n}\right)_{n\ge 0}$ to $-\frac{m}{m-1}\rho^i\nabla (\rho^1+\rho^2)^{m-1}-\nabla\Phi_i\rho^i$ $i=1,2$, which is necessary in order to obtain the weak formulation of the PDE system at the limit .
\item[(2)] In Theorem \ref{thm:precise} if in addition $\left(\tilde\rho^{i,\t_n}\right)_{n\ge 0}$ either for $i=1$ or $i=2$ converges a.e. in $[0,T]\times\Om$ then both sequences ($i=1,2$) converge strongly in $L^{2m-1}([0,T]\times\Om)$ to $\rho^i$ and the corresponding momentum $\left(\tE^{i,\t_n}\right)_{n\ge 0}$ ($i=1,2$) converge in the sense of distributions to $-\frac{m}{m-1}\rho^i\nabla (\rho^1+\rho^2)^{m-1}-\nabla\Phi_i\rho^i.$ The study of a stable scenario when this holds true is the subject of Section \ref{sec:segregated_1D}.
\end{itemize}
\end{remark}
\begin{proof}[Proof of Remark \ref{rmk:conv}(2)]
First, clearly the pointwise convergence of $\left(\tilde\rho^{i,\t_n}\right)_{n\ge 0}$ and the strong convergence of $\left(\tilde\rho^{1,\t_n}+\tilde\rho^{2,\t_n}\right)_{n\ge 0}$ in $L^{2m-1}([0,T]\times\Om)$ by a suitable version of Vitali's convergence theorem 
imply the strong convergence of $\left(\tilde\rho^{i,\t_n}\right)_{n\ge 0}$ in $L^{2m-1}([0,T]\times\Om)$ to $\rho^i$. Since one of the terms in the sum of these sequences and the sum itself converges strongly, so does the other term as well.
Let us recall the formula 
$$\tE^{i,\t_n}=-\frac{m}{m-1}\tilde\rho^{i,\t_n}\nabla(\tilde \rho^{1,\t_n}+\tilde \rho^{2,\t_n})^{m-1}-\nabla\Phi_1\tilde\rho^{1,\t_n}.$$
The strong convergence of $\left(\tilde\rho^{i,\t_n}\right)_{n\ge 0}$ implies that the second term of $\tE^{i,\t_n}$, i.e. $-\nabla\Phi_i\tilde\rho^{i,\t_n}$ (since $\nabla\Phi_i$ is in $L^\infty(\Om;\R^d)$) converges strongly in $L^{2m-1}([0,T]\times\Om;\R^d)$ to $-\nabla\Phi_i\rho^i.$ Thus in particular weakly-$\star$ in $\sM^d([0,T]\times\Om).$ The first term of $\tE^{i,\t_n}$ can be written as 
$$-\frac{m}{m-1}\tilde\rho^{i,\t_n}\nabla(\tilde \rho^{1,\t_n}+\tilde \rho^{2,\t_n})^{m-1}=-\frac{m}{m-1}\left[\nabla (\tilde\rho^{1,\t_n}+\tilde\rho^{2,\t_n})^{m-1}\right](\tilde\rho^{1,\t_n}+\tilde\rho^{2,\t_n})^{1/2}\frac{\tilde\rho^{i,\t_n}}{(\tilde\rho^{1,\t_n}+\tilde\rho^{2,\t_n})^{1/2}},$$
and notice furthermore that 
$$\ds-\frac{m}{m-1}\left[\nabla (\tilde\rho^{1,\t_n}+\tilde\rho^{2,\t_n})^{m-1}\right](\tilde\rho^{1,\t_n}+\tilde\rho^{2,\t_n})^{1/2}=-\frac{m}{m-1/2}\nabla(\tilde\rho^{1,\t_n}+\tilde\rho^{2,\t_n})^{m-1/2}.$$
Theorem \ref{thm:L2H1} implies that the sequence $\left(-\frac{m}{m-1/2}(\tilde\rho^{1,\t_n}+\tilde\rho^{2,\t_n})^{m-1/2}\right)_{n\ge0}$ is uniformly bounded in the space $\ds L^2([0,T];H^1(\Om)),$ hence there exists a subsequence (not relabeled) and some $\xi\in L^2([0,T];H^1(\Om))$ such that 
$\left(-\frac{m}{m-1/2}(\tilde\rho^{1,\t_n}+\tilde\rho^{2,\t_n})^{m-1/2}\right)_{n\ge0}$ is converging weakly to $\xi$ as $n\to+\infty.$ In particular, $-\frac{m}{m-1/2}(\tilde\rho^{1,\t_n}+\tilde\rho^{2,\t_n})^{m-1/2}\weakly\xi$ weakly in $L^2([0,T]\times\Om)$ and $-\frac{m}{m-1/2}\nabla(\tilde\rho^{1,\t_n}+\tilde\rho^{2,\t_n})^{m-1/2}\weakly\nabla \xi$ weakly in $L^2([0,T]\times\Om;\R^d)$ as $n\to+\infty.$

By Proposition \ref{prop:strong_comp} one has that $\left(\tilde\rho^{1,\t_n}+\tilde\rho^{2,\t_n}\right)_{n\ge 0}$ converges strongly to $\rho^{1}+\rho^{2}$ in $L^{2m-1}([0,T]\times\Om),$ which implies in particular that $\left((\tilde\rho^{1,\t_n}+\tilde\rho^{2,\t_n})^{m-1/2}\right)_{n\ge 0}$ converges strongly in $L^2([0,T]\times\Om)$ to $(\rho^{1}+\rho^{2})^{m-1/2}.$ This together with the above weak convergences implies that $\nabla\xi=-\frac{m}{m-1/2}\nabla(\rho^{1}+\rho^{2})^{m-1/2}\in L^2([0,T]\times\Om;\R^d).$

Now, by the strong convergence of $\tilde\rho^{i,\t_n}$ to $\rho^{i}$ in $L^{2m-1}([0,T]\times\Om)$ one has that 
$$\frac{\tilde\rho^{i,\t_n}}{(\tilde\rho^{1,\t_n}+\tilde\rho^{2,\t_n})^{1/2}} \to \frac{\rho^{i}}{(\rho^{1}+\rho^{2})^{1/2}}\ \ \text{as}\ \ n\to+\infty$$ 
pointwisely a.e. in $[0,T]\times\Om.$ Moreover, since the densities are non-negative one has $\frac{\tilde\rho^{i,\t_n}}{(\tilde\rho^{1,\t_n}+\tilde\rho^{2,\t_n})^{1/2}}\le (\tilde\rho^{i,\t_n})^{\frac12}$ and $(\tilde\rho^{i,\t_n})^{\frac12}$ converges to $(\rho^i)^{\frac12}$ strongly in $L^{2(2m-1)}([0,T]\times\Om).$ Hence Lebesgue's dominated convergence theorem implies that 
$$\ds \frac{\tilde\rho^{i,\t_n}}{(\tilde\rho^{1,\t_n}+\tilde\rho^{2,\t_n})^{1/2}} \to \frac{\rho^{i}}{(\rho^{1}+\rho^{2})^{1/2}}\ \ \text{as}\  n\to+\infty,$$ 
strongly in $L^{2(2m-1)}([0,T]\times\Om).$

Gluing together the two previous results, one obtains that $\ds -\frac{m}{m-1}\nabla (\tilde\rho^{1,\t_n}+\tilde\rho^{2,\t_n})^{m-1}\tilde\rho^{i,\t_n}$ converges weakly to $\ds -\frac{m}{m-1}\nabla(\rho^1+\rho^2)^{m-1}\rho^i$ in $L^r([0,T]\times\Om;\R^d)$ as $n\to+\infty$, where $\frac{1}{2}+\frac{1}{2(2m-1)}+\frac1r=1,$ i.e. $r=\frac{2(2m-1)}{2m-2}>1$. So in particular the convergence is weakly-$\star$ in $\sM^d([0,T]\times\Om)$, which together with the strong convergence of the term $\nabla\Phi_i\tilde\rho^{i,\t_n}$ implies the thesis.

\end{proof}

\begin{theorem}\label{thm:precise_infty}
Let $m=+\infty$ and let and let us consider $\tilde\rho^{i,\t}, i=1,2$ the piecewise constant interpolations between the densities $(\rho_{k}^{i,\t})_{k=0}^N$ and $\tE^{i,\t}, i=1,2$ the corresponding momentum variables. Let us consider moreover $p^\t$ the piecewise constant interpolations between the pressure variables $(p^\t_k)_{k=0}^{N-1}.$ Let us take any positive sequence  $(\t_n)_{n\ge 0}$ such that $\t_n\da 0$ as $n\to+\infty$, and let us consider the weak limit $p$ of $(p^{\t_n})_{n\ge 0}$ in $L^2([0,T];H^1(\Om))$ and $\rho^i$ the limit of $(\rho^{i,\t_n})_{n\ge 0}$ in $L^\infty([0,T];(\sP^{M_i}(\Om),W_2))$ (up to passing to a subsequence that we do not relabel). Then we have the following:
$$p(1-(\rho^1+\rho^2))=0\ \ \ae\ {\rm{in\ }}[0,T]\times\Om.$$
\end{theorem}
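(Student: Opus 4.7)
The plan is to integrate the discrete-level complementarity at each step $k$, namely $p^\t_{k+1}\ge 0$ and $p^\t_{k+1}(1-(\rho^{1,\t}_{k+1}+\rho^{2,\t}_{k+1}))=0$ a.e.\ in $\Om$ (which comes from Lemma~\ref{lem:opt_cond_infty}(2)), against a nonnegative test function; then to pass to the limit along $\t_n\downarrow 0$ via a weak--strong pairing between the pressure and the total density; and finally to conclude pointwise from the nonnegativity of the limit integrand.

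Setting $R^\t:=\tilde\rho^{1,\t}+\tilde\rho^{2,\t}$, the construction \eqref{interp:const}--\eqref{def:press_interp} turns the discrete identity into $\tilde p^\t\ge 0$ and $\tilde p^\t R^\t = \tilde p^\t$ a.e.\ on $[0,T]\times\Om$. Testing against $\phi\in C^\infty_c([0,T]\times\Om)$ with $\phi\ge 0$ gives
\begin{equation*}
\int_0^T\!\!\int_\Om \tilde p^\t\phi \dd x \dd t = \int_0^T\!\!\int_\Om \tilde p^\t R^\t \phi \dd x \dd t.
\end{equation*}
The left-hand side converges to $\int_0^T\!\int_\Om p\phi\dd x \dd t$ via the weak $L^2([0,T];H^1(\Om))$ convergence of $\tilde p^{\t_n}$ guaranteed by Lemma~\ref{lem:press_bound}.

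The main obstacle is identifying the limit of the right-hand side as $\int\!\int p(\rho^1+\rho^2)\phi$, which requires strong compactness of $(R^{\t_n})_n$ in a space dual to one in which the pressure converges weakly. Summing the continuity equations \eqref{eq:cont_tau} satisfied by the continuous McCann interpolations and using the relation $R^\t\nabla\tilde p^\t=\nabla\tilde p^\t$ a.e.\ (a consequence of $\tilde p^\t=0$ a.e.\ on $\{R^\t<1\}$, via Stampacchia's lemma applied to the $H^1$ function $\tilde p^\t$) yields in the distributional sense
\begin{equation*}
\partial_t R^\t = -\Delta\tilde p^\t - \diver\bigl(\tilde\rho^{1,\t}\nabla\Phi_1+\tilde\rho^{2,\t}\nabla\Phi_2\bigr),
\end{equation*}
so that by Lemma~\ref{lem:press_bound} and \eqref{hyp:phi}, $\partial_t R^\t$ is uniformly bounded in $L^2([0,T];H^{-1}(\Om))$. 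Combined with the trivial bound $\|R^\t\|_{L^\infty}\le 1$ and the compact embedding $L^\infty(\Om)\hookrightarrow H^{-s}(\Om)$ on the bounded domain $\Om$ (valid for any $s\in(0,1)$), a refined Aubin--Lions argument (Theorem~\ref{thm:aubin_refined}) then delivers $R^{\t_n}\to \rho^1+\rho^2$ strongly in $L^2([0,T];H^{-s}(\Om))$ along a further subsequence.

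With this strong convergence in hand, $\phi\tilde p^{\t_n}\rightharpoonup \phi p$ weakly in $L^2([0,T];H^s(\Om))$ (using $H^1\hookrightarrow H^s$ and the smoothness of $\phi$), and the $H^s/H^{-s}$ duality pairing gives $\int\!\int \tilde p^{\t_n}R^{\t_n}\phi \to \int\!\int p(\rho^1+\rho^2)\phi$. Matching the two limits yields $\int\!\int p(1-(\rho^1+\rho^2))\phi=0$ for every nonnegative $\phi\in C^\infty_c([0,T]\times\Om)$. Since $p\ge 0$ (as a weak $L^2$ limit of nonnegative functions) and $0\le \rho^1+\rho^2\le 1$ a.e.\ (preserved in the weak-$\star$ $L^\infty$ limit), the integrand is nonnegative a.e., and the arbitrariness of $\phi$ forces the pointwise conclusion $p(1-(\rho^1+\rho^2))=0$ a.e.\ on $[0,T]\times\Om$. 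The delicate point throughout is the strong compactness of $R^\t$: in contrast with the case $m<\infty$ (Proposition~\ref{prop:strong_comp}), where the spatial regularity of $R^\t$ comes directly from the uniform $H^1$ bound on $(R^\t)^{m-1/2}$, at $m=\infty$ this regularity must be borrowed from the pressure's $H^1$ control via the complementarity structure, and one only recovers strong convergence in a negative Sobolev space.
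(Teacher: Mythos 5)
Your argument is essentially correct in outline and reaches the stated conclusion, but it travels by a genuinely different road than the paper. The paper's proof is a one-line reference to \cite[Lemma~3.5]{MesSan}, whose core is a Wasserstein--Lipschitz pairing: writing $\int_\Om \tilde p^\t_t(\rho_t-\tilde\rho^\t_t)\dd x$ along the optimal map and using geodesic convexity of the $\{\rho\le1\}$ constraint to get the clean estimate
\begin{equation*}
\Bigl|\int_\Om \tilde p^\t_t\,(\rho_t-\tilde\rho^\t_t)\,\dd x\Bigr|\le \|\nabla \tilde p^\t_t\|_{L^2(\Om)}\;W_2(\tilde\rho^\t_t,\rho_t),
\end{equation*}
so that the uniform-in-time $W_2$ convergence of the densities (Proposition~\ref{prop:conv_unif-measure}, Lemma~\ref{lem:ineq_sum_1}) and the $L^2_tH^1_x$ bound on the pressure (Lemma~\ref{lem:press_bound}) kill the cross term directly. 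Your route replaces this by a compactness argument: $\partial_t(\rho^{1,\t}+\rho^{2,\t})$ bounded in $L^2_tH^{-1}_x$, uniform $L^\infty$ bound, Aubin--Lions giving strong $L^2_tH^{-s}_x$ convergence, then weak--strong $H^{s}/H^{-s}$ duality. Both work; the paper's approach avoids negative Sobolev spaces and is arguably sharper because it feeds the $W_2$ metric structure in directly, whereas yours is a more standard PDE compactness argument and needs an extra interpolation of spaces.

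Two points in your write-up need repair, though neither is fatal. First, the displayed identity $\partial_t R^\t=-\Delta\tilde p^\t-\diver(\tilde\rho^{1,\t}\nabla\Phi_1+\tilde\rho^{2,\t}\nabla\Phi_2)$ is not correct as written: the continuity equation \eqref{eq:cont_tau} is satisfied by the McCann (continuous) interpolations $\rho^{i,\t}$ against the geodesic momenta $\E^{i,\t}$, and those momenta are \emph{not} equal to $-\nabla\tilde p^\t\,\tilde\rho^{i,\t}-\nabla\Phi_i\,\tilde\rho^{i,\t}$ in the interior of each time interval; you are mixing the continuous interpolation on the left with piecewise-constant objects on the right. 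What you actually need is only the uniform $L^2([0,T];H^{-1}(\Om))$ bound on $\partial_t(\rho^{1,\t}+\rho^{2,\t})$, and that follows from Lemma~\ref{lem:estimates}(2)--(3) together with $\rho^{i,\t}\le 1$ (each McCann interpolation of two densities $\le 1$ stays $\le 1$), so just quote that bound rather than writing a false PDE. Second, the complementarity $\tilde p^\t(1-R^\t)=0$ holds for the \emph{piecewise-constant} interpolations, while the standard Aubin--Lions gives compactness of the \emph{continuous} ones; either transfer the strong $H^{-s}$ convergence between the two interpolations (they agree at node points and are $W_2$-close by Corollary~\ref{cor:bdd}) or apply the refined version, Theorem~\ref{thm:aubin_refined}, directly to $\tilde\rho^{1,\t}+\tilde\rho^{2,\t}$ with $g=W_2$, in the spirit of Proposition~\ref{prop:strong_comp}. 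With these two clean-ups the argument is complete.
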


\begin{proof}
First notice that by Lemma \ref{lem:press_bound} and Proposition \ref{prop:conv_unif-measure} the weak limits $p$ and $\rho^i$ ($i=1,2$) exist. Furthermore, (1) follows from the previously mentioned results, Lemma \ref{lem:ineq_sum_1} and \cite[Lemma 3.5]{MesSan}.
\end{proof}

\begin{remark}\label{rmk:conv_infty}
In Theorem \ref{thm:precise_infty} if $\tilde\rho^{1,\t_n}$ and $\tilde\rho^{2,\t_n}$ are such that $\sL^d(\{\tilde\rho^{1,\t_n}_t>0\}\cap\{\tilde\rho^{2,\t_n}_t>0\})=0$ for all $t\in[0,T]$ and for all $n\in\N,$ then
$$\tE^{i,\t_n}\weaklys -\nabla p -\nabla\Phi_i\rho^i,\ \ {\rm{as\ }} n\to+\infty,\ {\rm{in\ }} \sM^d([0,T]\times\Om),\ i=1,2.$$
If moreover $\sL^d(\{\rho^{1}_t>0\}\cap\{\rho^{2}_t>0\})=0$ for a.e. $t\in[0,T]$, then
$$\nabla p=\rho^i\nabla p,\ \ae\ {\rm{in}}\ [0,T]\times\Om,\ \ i=1,2.$$
\end{remark}

\begin{proof}
To show the remark let us recall the form of the momentum variables, i.e.
$$\tE^{i,\t_n}=\tilde\v^{i,\t_n}\tilde\rho^{i,\t_n}=-\nabla p^{\t_n}\tilde\rho^{i,\t_n}-\nabla\Phi_i\tilde\rho^{i,\t_n}=-\nabla p^{\t_n}-\nabla\Phi_i\tilde\rho^{i,\t_n},$$
where the last equality holds true since $p^{\t_n}(1-(\tilde\rho^{1,\t_n}+\tilde\rho^{2,\t_n}))=0$ for all $t\in[0,T]$ and a.e. in $\Om$ and by the assumption  $\sL^d(\{\tilde\rho^{1,\t_n}_t>0\}\cap\{\tilde\rho^{2,\t_n}_t>0\})=0$ for all $t\in[0,T]$, one has $\{\tilde\rho^{1,\t_n}_t+\tilde\rho^{2,\t_n}_t=1\}=_{\ae}\{\tilde\rho^{1,\t_n}_t=1\}\cup\{\tilde\rho^{2,\t_n}_t=1\}$ for all $t\in[0,T]$ and the two sets are disjoint a.e. in $\Om$. By the weak convergences of $(p^{\t_n})_{n\ge 0}$ to $p$ and $(\tilde\rho^{i,\t_n})_{n\ge 0}$ to $\rho^i$ ($i=1,2$) one can easily conclude that 
$$\tE^{i,\t_n}\weaklys -\nabla p -\nabla\Phi_i\rho^i,\ \ {\rm{as\ }} n\to+\infty,\ {\rm{in\ }} \sM^d([0,T]\times\Om),\ i=1,2.$$
Also, $\nabla p=\rho^i\nabla p,\ i=1,2$ a.e. in $[0,T]\times\Om$ follows easily from (1) and the assumption $\sL^d(\{\tilde\rho^{1}_t>0\}\cap\{\tilde\rho^{2}_t>0\})=0$ for all $t\in[0,T]$, as desired.
\end{proof}


\subsection{Segregation of the densities}\label{subsec:separation_gen_dim}

As mentioned in the introduction, it seems natural to look for initial configurations of the system \eqref{eq:PME_m} where there is no mixing of the densities, to strengthen our convergence results in the continuum limit. We shall describe such initial configurations in one space dimension in the next section. Here we describe some properties of the time-discrete solutions (obtained by the JKO scheme) which hold for all dimensions. In particular, we show that when $\Phi_2=\Phi_1+C$ (for some $C\in\R$), then the densities stay segregated if initially they were so. We derive also some properties of the mixed region, when the two initial densities are mixed in a special way.  Still, these statements are only true for time-discrete solutions, and we cannot rule out the possibility that the limiting densities end up mixed, for instance, due to ``fingering" phenomena (see also the numerical observations in \cite{LorLorPer} which displays fingering phenomena when a system, similar to \eqref{eq:k_1-k_2}, has unstable combination of diffusion constants and source terms). It seems that additional geometric property is required to preserve the segregation property in the continuum limit.

\begin{proposition}\label{higher_D}
Let $m\in(1,+\infty]$. Let us assume moreover that $\Phi_1$ and $\Phi_2$ are such that $\Phi_2=\Phi_1+C$ on $\Om$ for some $C\in\R$, with the hypothesis \eqref{hyp:phi} fulfilled. Let $(\rho^1_0,\rho^2_0)\in\sP^{M_1}(\Om)\times\sP^{M_2}(\Om)$ satisfy \eqref{hyp:rho} and let $(\rho^1,\rho^2)$ be the minimizers in \eqref{gf:tau} constructed with the help of $(\rho^1_0,\rho^2_0)$. Then the following statements hold true.

\begin{itemize}
\item[(1)] If $\sL^d\left(\{\rho^1_0>0\}\cap\{\rho^2_0>0\}\right)=0,$ then $\sL^d\left(\{\rho^1>0\}\cap\{\rho^2>0\}\right)=0.$
\item[(2)] Let us define the Borel measurable sets $A:=\{\rho^1_0>0\}\cap\{\rho^2_0>0\}$ and $B:=\{\rho^1>0\}\cap\{\rho^2>0\}$. Let us suppose that $\sL^d(A)>0$ and $\sL^d(B)>0$. If  there exists $r>0$ such that  $r\rho^1_0\le \rho^2_0$ a.e. in $A$, then $r\rho^1\le \rho^2$ a.e. in $B$.
\end{itemize}
\end{proposition}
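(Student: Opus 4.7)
The plan is to exploit the hypothesis $\Phi_2=\Phi_1+C$, which forces the energy $\cF_m+\cG$ to depend on the pair $(\rho^1,\rho^2)$ only through $\sigma:=\rho^1+\rho^2$. Indeed, under this assumption one has $\cG(\rho^1,\rho^2)=\int_\Om\Phi_1\dd\sigma+CM_2$, while $\cF_m$ depends on $\sigma$ by definition. Consequently the JKO minimization in \eqref{gf:tau} decouples as an outer problem over $\sigma$ and an inner partition problem
\begin{equation*}
I(\sigma):=\inf_{\substack{\rho^1+\rho^2=\sigma \\ |\rho^i|=M_i}}\bigl(W_2^2(\rho^1,\rho^1_0)+W_2^2(\rho^2,\rho^2_0)\bigr),
\end{equation*}
and the proof reduces to identifying and uniquely characterizing the minimizer of $I(\sigma)$ for every absolutely continuous $\sigma\in\sP^{M_1+M_2}(\Om)$.

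Second, I would show that $I(\sigma)=W_2^2(\sigma,\sigma_0)$ with $\sigma_0:=\rho^1_0+\rho^2_0$, and that the unique minimizer is $\rho^i=S_\#\rho^i_0$, where $S$ denotes the Brenier map from $\sigma_0$ to $\sigma$. The lower bound $I(\sigma)\ge W_2^2(\sigma,\sigma_0)$ is immediate by summing any competing pair of transport plans. For the matching upper bound, let $T_0$ be the Brenier map from $\sigma$ to $\sigma_0$, which is a.e.\ invertible since both $\sigma$ and $\sigma_0$ are absolutely continuous by \eqref{hyp:rho} and Lemma \ref{lem:opt_cond} (or Lemma \ref{lem:opt_cond_infty} when $m=\infty$), and set $S:=T_0^{-1}$. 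The decomposition $\rho^i:=S_\#\rho^i_0$ has the prescribed masses, and the restriction of the cyclically monotone map $T_0$ to $\{\rho^i>0\}$ remains $c$-cyclically monotone, hence is the optimal map from $\rho^i$ to $\rho^i_0$; summing the two costs gives exactly $W_2^2(\sigma,\sigma_0)$. Uniqueness follows since any other optimal pair produces, through the sum of its own optimal plans, a plan on $\sigma\times\sigma_0$ of the same cost; by Brenier uniqueness this plan is induced by $T_0$, forcing the other decomposition to coincide with $S_\#\rho^i_0$.

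Finally, I would translate this transport representation into the pointwise statements of (1) and (2) via the a.e.\ change of variables $\rho^i(x)=\rho^i_0(T_0(x))\,|\det DT_0(x)|$ valid for $\sL^d$-a.e.\ $x\in\{\sigma>0\}$. This gives $\{\rho^i>0\}=T_0^{-1}(\{\rho^i_0>0\})$ up to null sets, which proves (1) directly. For (2), the Jacobian factor cancels in the ratio, so $\rho^2(x)/\rho^1(x)=\rho^2_0(T_0(x))/\rho^1_0(T_0(x))$ wherever both sides are defined; since $B$ coincides a.e.\ with $T_0^{-1}(A)$, the inequality $r\rho^1_0\le\rho^2_0$ on $A$ transfers directly to $r\rho^1\le\rho^2$ on $B$. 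The main obstacle in this last step is the rigorous handling of the Monge-Amp\`ere identity for the Brenier map $T_0$, which need not be classically regular; however the required a.e.\ formula is a standard consequence of Alexandrov differentiability of convex potentials and is recalled in Appendix \ref{sec:appendix_ot}. All other ingredients reduce to Brenier's theorem and cyclical monotonicity.
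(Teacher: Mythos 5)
Your proof is correct, but it takes a genuinely different route from the paper. The paper argues more directly: it invokes Lemma~\ref{lem:opt_cond} (resp.\ Lemma~\ref{lem:opt_cond_infty}) to write the optimal maps $T^i$ explicitly in terms of $\nabla(\rho^1+\rho^2)^{m-1}$ (resp.\ $\nabla p$) and $\nabla\Phi_i$, observes that $\nabla\Phi_1=\nabla\Phi_2$ forces $T^1=T^2$ on the overlap $\{\rho^1>0\}\cap\{\rho^2>0\}$, and then deduces (1) by a Lebesgue point contradiction argument and (2) by the Monge--Amp\`ere identity $\det(DT^i)=\rho^i/(\rho^i_0\circ T^i)$ a.e.\ (citing \cite{DePFig}). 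You instead exploit the variational structure upstream of the optimality conditions: since $\cF_m+\cG$ depends only on $\sigma=\rho^1+\rho^2$, the JKO step decouples, the inner partition problem satisfies $I(\sigma)=W_2^2(\sigma,\sigma_0)$ (the lower bound being Lemma~\ref{lem:ineq_sum_1}, the upper bound and uniqueness coming from restriction of the Brenier map and uniqueness of the optimal plan), and this yields the closed-form representation $\rho^i=S_\#\rho^i_0$. The two arguments meet at the same key fact ($T^i$ coincides with the optimal map for $\sigma$ on $\{\rho^i>0\}$) and then use the same Jacobian identity; yours arrives there without using the explicit Euler--Lagrange form of $T^i$, at the cost of an extra ``gluing'' lemma for $I(\sigma)$. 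Your version is more structural and gives strictly more information (the explicit optimal partition), which could be reused elsewhere, while the paper's is shorter given that Lemmas~\ref{lem:opt_cond}--\ref{lem:opt_cond_infty} are already available. Both are valid; just make sure to note that your invertibility and decomposition steps need $\sigma$ absolutely continuous, which follows from finiteness of $\cF_m$ (or the $L^\infty$ constraint when $m=\infty$) at the minimizer rather than from \eqref{hyp:rho} alone.
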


\begin{proof}
Let us use the notation $\nabla\Phi:=\nabla\Phi_1=\nabla\Phi_2.$ Using \eqref{opt:cond}, the optimal transport maps $T^i$ ($i=1,2$) in the transport of $\rho^i$ onto $\rho^i_0$ (see Lemma \ref{lem:opt_cond}-\ref{lem:opt_cond_infty}) can be written ($\rho^i-\ae$) as 
$$T^i=\left\{\ba{ll}
{\text{id}}+\t\left(\frac{m}{m-1}\nabla (\rho^1+\rho^2)^{m-1}+\nabla\Phi\right), & \text{if } m\in(1,+\infty),\\[5pt]
\id+\t(\nabla p +\nabla\Phi), & \text{if } m=\infty.
\ea
\right.$$
In particular, observe that $T^1=T^2$ a.e. in $\{\rho^1>0\}\cap\{\rho^2>0\}.$

We show (1). Suppose that the Borel measurable set $B:=\{\rho^1>0\}\cap\{\rho^2>0\}$ has positive Lebesgue measure. For any $x_0\in B$ such that $x_0$ is a Lebesgue point of $\rho^1$, $\rho^2$ and $T^1, T^2$ and $T^1(x_0)=T^2(x_0)$ is a Lebesgue point for both $\rho^1_0$ and $\rho^2_0,$ one has (since $T^1(x_0)=T^2(x_0)$) that $T^1(x_0)=T^2(x_0)\in\{\rho^1_0>0\}\cap\{\rho^2_0>0\}.$ In particular the positive mass of each  $\rho^i$ ($i=1,2$) on $B$ is transported onto $\{\rho^1_0>0\}\cap\{\rho^2_0>0\}$. On the other hand, since both $\rho^1_0$ and $\rho^2_0$ are absolutely continuous w.r.t. $\sL^d,$ this mass cannot be supported on an $\sL^d$-null set, which is a contradiction to the assumption $\sL^d\left(\{\rho^1_0>0\}\cap\{\rho^2_0>0\}\right)=0$.

\vspace{0.2cm}
We show (2). First observe that one can write two Jacobian equation in a weak sense, i.e. 
\be\label{eq:MA_12}
\det(DT^i)=\frac{\rho^i}{\rho^i_0\circ T^i},\ \ \rho^i-\ae.
\ee
Since the measures $\rho^i$, $i=1,2$ are absolutely continuous w.r.t. the Lebesgue measure, the maps $T^i$ are differentiable $\rho^i-\ae$ and the previous equation holds true  pointwisely $\rho^i-\ae$ (see for instance \cite[Theorem 3.1]{DePFig}). Let us choose $x_0\in B$ such that it is a Lebesgue point of both $\rho^1$ and $\rho^2$ and it is a point of differentiability of both $T^1$ and $T^2$ (in particular $\rho^1(x_0)>0$ and $\rho^2(x_0)>0$). Since the optimal transport maps coincide on the common support of $\rho^1$ and $\rho^2$, one may assume that $T(x_0):=T^1(x_0)=T^2(x_0)$ is a Lebesgue point of both $\rho^1_0$ and $\rho^2_0$. The Jacobian equation \eqref{eq:MA_12} yields that $\rho^2(x_0)/\rho^1(x_0)=\rho^2_0(T(x_0))/\rho^1_0(T(x_0))\ge r$, which concludes the proof.
\end{proof}


\section{Segregated weak solutions in 1D}\label{sec:segregated_1D}

In this section we study the local segregation property of the supports for the time-discrete solutions.  As a consequence we show the existence of {\it segregated weak solutions} of the systems \eqref{eq:PME_m} and \eqref{eq:PME_infty} in one spacial dimension.  

\subsection{Separation of the supports and ordering property in one space dimension} 

\begin{customthm}{Hyp-1D}\label{fmw:1D_ordered}
We set the following geometric framework (see also Figure \ref{fig:framework} below for illustration).
\begin{itemize}
\item[(1)] $d=1$, $\Om$ a bounded open interval, the potentials $\Phi_i$, $i=1,2$ are semi-convex and $C^1(\Om)$;
\item[(2)] The drifts are `ordered', in the sense that $\partial_x\Phi_2(x) \geq \partial_x\Phi_1(x)$ for all $x\in\Om$. This means in particular that $\Phi_2-\Phi_1$ is increasing;
\item[(3)] $\rho^1_0$ and $\rho^2_0$ are two densities such that for a.e. $x\in\{\rho^1_0>0\}$ and $y\in\{\rho^2_0>0\}$ one has that $y<x$. We refer to this last property as \emph{``ordering of the supports''} of the initial densities. This implies in particular that $\sL^1(\{\rho^1_0>0\}\cap\{\rho^2_0>0\})=0.$
\end{itemize}
\end{customthm}
Let us point out that the assumptions from \ref{fmw:1D_ordered} immediately imply with reasoning parallel to Proposition~\ref{higher_D} that $\sL^1(\{\rho^1>0\}\cap\{\rho^2>0\})=0$ for one-step minimizers $(\rho^1, \rho^2)$ given by \eqref{gf:tau}. To see this, suppose  $\{\rho^1>0\}\cap\{\rho^2>0\}=_{a.e}B$ for some Borel measurable set $B$ such that $\sL^1(B)>0.$  As in the proof of Proposition~\ref{higher_D}, The optimal transport map $T^i$ ($i=1,2$) in the transport of $\rho^i$ onto $\rho^i_0$ is given by 
$$T^i=\left\{\ba{ll}
{\text{id}}+\t\left(\frac{m}{m-1}\partial_x (\rho^1+\rho^2)^{m-1}+\partial_x\Phi_i\right), & \text{if } m\in(1,+\infty),\\[5pt]
\id+\t(\partial_x p +\partial_x\Phi_i), & \text{if } m=\infty.
\ea
\right.$$
The above formula and the assumption $\partial_x\Phi_2-\partial_x\Phi_1\ge 0$ in $\Om$ yield that $T^2(x)\ge T^1(x)$ a.e. in $B$, which contradicts the ordering property of the initial data.

\medskip

Still, this separation property is not enough to iterate over time steps unless the ordering property of the initial configuration is preserved for $(\rho^1, \rho^2)$. This is what we prove next. 

\begin{proposition}\label{prop:separation_1D}
Let $m\in(1,+\infty]$ and suppose  the assumptions in \eqref{fmw:1D_ordered} and the hypotheses \eqref{hyp:phi}-\eqref{hyp:rho} are in place. Let us denote by $(\rho^1, \rho^2)$ the one-step time discrete solutions given by \eqref{gf:tau} for $k=0$. Then the \emph{ordering property} from  \eqref{fmw:1D_ordered} holds true for $\{\rho^1>0\}$ and $\{\rho^2>0\}$.
\end{proposition}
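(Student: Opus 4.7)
The paragraph preceding the statement already shows $\sL^1(\{\rho^1>0\}\cap\{\rho^2>0\})=0$, so it is enough to rule out the existence of Lebesgue points $x_*\in\{\rho^1>0\}$ and $y_*\in\{\rho^2>0\}$ with $x_*<y_*$. I argue by contradiction via a ``swap'' competitor in \eqref{gf:tau} that strictly decreases the objective. Exploiting that in one space dimension the optimal transport maps $T^i$ are monotone nondecreasing, hence continuous outside a countable set, I pick $x_*,y_*$ so that $T^1$ is continuous at $x_*$, $T^2$ is continuous at $y_*$, and in addition $a:=T^1(x_*)$ and $b:=T^2(y_*)$ are Lebesgue points of $\{\rho_0^1>0\}$ and $\{\rho_0^2>0\}$ respectively. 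The initial ordering then forces $b<a$.

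For small $r>0$ set $A=(x_*-r,x_*+r)$ and $B=(y_*-r,y_*+r)$, disjoint and contained in $\Om$, on which $\rho^1|_A$ and $\rho^2|_B$ admit uniform positive lower bounds. Let $\sigma\colon A\to B$ be the (measure-preserving) translation $\sigma(x)=x+(y_*-x_*)$, and for small $\delta>0$ put
\[
\tilde\rho^1:=\rho^1-\delta\chi_A+\delta\chi_B,\qquad \tilde\rho^2:=\rho^2+\delta\chi_A-\delta\chi_B.
\]
The disjointness of the supports of $\rho^1,\rho^2$ together with the smallness of $\delta$ makes both $\tilde\rho^i$ nonnegative with the correct masses, and $\tilde\rho^1+\tilde\rho^2=\rho^1+\rho^2$; hence $\cF_m(\tilde\br)=\cF_m(\br)$ (this includes $m=+\infty$, since the constraint $\tilde\rho^1+\tilde\rho^2\le 1$ is automatic). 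Moreover
\[
\cG(\tilde\br)-\cG(\br)=\delta\int_A(\Phi_2-\Phi_1)\dd x-\delta\int_B(\Phi_2-\Phi_1)\dd x\le 0,
\]
since $\Phi_2-\Phi_1$ is nondecreasing and $A$ sits strictly to the left of $B$.

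For the Wasserstein piece, I would build admissible (generally non-optimal) plans for $\tilde\rho^i\to\rho_0^i$ by modifying the original plans $(\id,T^i)_\#\rho^i$: reroute a fraction $\delta$ of the mass originally carried from $A$ to $T^1(A)$ so that it departs instead from $\sigma(A)=B$ for the first plan, and symmetrically, reroute a fraction $\delta$ of the mass originally carried from $B$ to $T^2(B)$ so that it departs from $\sigma^{-1}(B)=A$ for the second. Combining the pointwise identity
\[
(y-a)^2-(x-a)^2+(x-b)^2-(y-b)^2=2(y-x)(b-a)
\]
with continuity of $T^i$ at $x_*,y_*$ (which, by Lebesgue differentiation, replaces $T^1(x)$ by $a$ and $T^2(\sigma(x))$ by $b$ in the averaged estimates up to an $o(1)$ as $r\downarrow 0$), one obtains
\[
\tfrac{1}{2\tau}\bigl[\bW_2^2(\tilde\br,\br_0)-\bW_2^2(\br,\br_0)\bigr]\;\le\;\tfrac{\delta|A|}{\tau}(y_*-x_*)(b-a)+o(\delta r).
\]
Since $y_*-x_*>0$ and $b-a<0$, the leading term is strictly negative; taking $r$ small and then $\delta$ small makes the overall perturbation strictly decrease the JKO objective $\cF_m+\cG+\tfrac{1}{2\tau}\bW_2^2(\cdot,\br_0)$, contradicting the minimality of $(\rho^1,\rho^2)$. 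The main technical obstacle is the $o(\delta r)$ Wasserstein remainder, which requires care because the derivative of $(\rho^1+\rho^2)^{m-1}$ is only $L^\infty$; all other pieces (mass conservation, sign of the drift, invariance of $\cF_m$, handling of $m=+\infty$ via Lemma~\ref{lem:opt_cond_infty}) are immediate.
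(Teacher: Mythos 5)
Your swap competitor is exactly the paper's construction, and the overall argument is on the right track. There are, however, two technical points worth flagging, one of which is a genuine gap.

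First, the claim that $\rho^1|_A$ and $\rho^2|_B$ admit uniform positive lower bounds on the \emph{entire} intervals $A=(x_*-r,x_*+r)$, $B=(y_*-r,y_*+r)$ does not follow from $x_*,y_*$ being Lebesgue points; a Lebesgue point of $\rho^i$ (even with $\rho^i(x_*)>0$) yields no pointwise lower bound on a neighborhood — the density can still drop to zero on a thin subset of $A$. This matters because the swap $\tilde\rho^1=\rho^1-\delta\chi_A+\delta\chi_B$ would then fail to be nonnegative. The paper's proof addresses this by not swapping on full balls but rather on a measurable subset $E^1\subset B_r(x_*)$ chosen so that both $\rho^1\ge\delta$ on $E^1$ \emph{and} $\rho^2\ge\delta$ on the translate $E^2:=E^1+\theta$ (this is the paper's ``Claim,'' with $E^1:=\tilde E^1\cap(\tilde E^2-\theta)$ and $\tilde E^i$ the sub-level-cut of $\rho^i$ near $x_*,y_*$); the Lebesgue-point property of the \emph{functions} $\rho^i$ (not just the sets $\{\rho^i>0\}$) guarantees $E^1$ has positive measure. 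Without this refinement, your competitor is not admissible.

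Second, the $o(\delta r)$ remainder you worry about is self-inflicted and can be avoided entirely. You replace $T^1(x)$ and $T^2(\sigma(x))$ by the constants $a=T^1(x_*)$, $b=T^2(y_*)$, but the algebraic identity you quote,
\[
(y-a)^2-(x-a)^2+(x-b)^2-(y-b)^2=2(y-x)(b-a),
\]
holds for \emph{arbitrary} $a,b$; applying it pointwise with $a=T^1(x)$ and $b=T^2(\sigma(x))$ gives the exact change in transport cost $2\delta(y_*-x_*)\bigl(T^2(\sigma(x))-T^1(x)\bigr)$, with no remainder. The sign is then strictly negative \emph{pointwise} (for $\rho^i$-a.e.\ $x$), because $T^1(x)\in\{\rho^1_0>0\}$, $T^2(\sigma(x))\in\{\rho^2_0>0\}$, and the initial ordering forces $T^2(\sigma(x))<T^1(x)$. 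No continuity of $T^i$, no Lebesgue differentiation, and no smallness of $r$ are needed at this step — which is exactly the paper's computation, ending with
\[
E_{W_2}\le\frac{\delta\theta}{\tau}\int_{E^1}\bigl(T^2(x+\theta)-T^1(x)\bigr)\dd x<0.
\]
So once the admissibility gap above is fixed, the argument closes cleanly.
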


\medskip

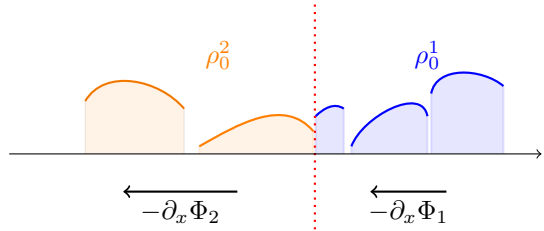
\begin{figure}[h]
\begin{tikzpicture}

\draw [thick, orange]  (9,0.7) to [out=60, in=130] (10.3,0.6);
\draw[fill=orange, opacity=0.1] (9,0) to (9,0.7) to [out=60, in=130] (10.3,0.6) -- (10.3,0);
\draw [thick, orange]  (10.5,0.1) to [out=30, in=130] (12,0.3);
\draw [thick, orange, fill=orange, opacity=0.1] (10.5,0) -- (10.5,0.1) to [out=30, in=130] (12,0.3) -- (12,0);
\node [above, orange] at (10.75,1) {$\rho^2_0$};
\draw[<-, thick] (9.5,-0.5) -- (11,-0.5);
\node [below] at (10.25,-0.5) {$-\partial_x\Phi_2$};

\draw [thick, blue] (12.03,0.5) to [out=45, in=145] (12.4,0.6);
\draw [thick, blue, fill=blue, opacity=0.1] (12.03,0) -- (12.03,0.5) to [out=45, in=145] (12.4,0.6) -- (12.4,0);
\draw [thick, blue] (12.5,0.1) to [out=70, in=100] (13.5,0.5);
\draw [thick, blue, fill=blue, opacity=0.1] (12.5,0) -- (12.5,0.1) to [out=70, in=100] (13.5,0.5) -- (13.5,0);
\draw [thick, blue] (13.55,0.8) to [out=80, in=140] (14.5,0.9);
\draw [thick, blue, fill=blue, opacity=0.1] (13.55,0) -- (13.55,0.8) to [out=80, in=140] (14.5,0.9) -- (14.5,0);
\node [above, blue] at (13.5,1) {$\rho^1_0$};
\draw[<-, thick] (12.75,-0.5) -- (13.75,-0.5);
\node [below] at (13.25,-0.5) {$-\partial_x\Phi_1$};


\draw [thick, dotted, red] (12.02,-1) -- (12.02,2);

\draw [->] (8,0) -- (15,0);

\end{tikzpicture}
\caption{Ordering of the supports of the initial data} \label{fig:framework}
\end{figure}

\begin{proof}

Suppose the contrary, i.e. there exist $B_1\subseteq \{\rho^1>0\}$ and $B^2\subseteq \{\rho^2>0\}$ with $\sL^1(B^1)>0$ and $\sL^1(B_2)>0$ such that for a.e. $x\in B^1$ and $y\in B^2$ $x<y$ (see Figure \ref{fig:1D_contrary} for illustration). 

\emph{Claim:} there exist $E^i\subseteq B^i$, $i=1,2$ Borel measurable sets, $\theta>0$ and $\d>0$ such that $\sL^1(E^1)=\sL^1(E^2)>0$, $E^2=E^1+\theta$ and $\rho^i\ge \d$ a.e. on $E^i$, $i=1,2$ (see Figure \ref{fig:1D_contrary} for illustration).

\emph{Proof of the claim.} Let us take $x_0\in B^1, y_0\in B^2$  Lebesgue points. This means in particular that $\rho^1(x_0)>0,$ $\rho^2(y_0)>0$ and  
\be\label{eq:Lebesgue}
\lim_{r\da 0}\fint_{B_r(x_0)}\left| \rho^1(x) -\rho^1(x_0)\right|\dd x = 0,\ \ \lim_{r\da 0}\fint_{B_r(y_0)}\left| \rho^2(x) -\rho^2(y_0)\right|\dd x = 0.
\ee
Now let us take $r>0$ small (we fix it later) and let $\d:=\min\left\{\rho^1(x_0)/2,\rho^2(y_0)/2\right\}.$ Let us consider moreover the measurable sets $\tilde E^1\subseteq B^1\cap B_r(x_0)$ and $\tilde E^2\subseteq B^2\cap B_r(y_0)$ defined as $\tilde E^i:=\left\{\rho^i\ge \d\right\},\ i=1,2.$ By construction, for $r>0$ small enough one has that $\sL^1(B_r(x_0)\setminus \tilde E^1)/ \sL^1(B_r(x_0))\le 1/3$ and $\sL^1(B_r(y_0)\setminus\tilde E^2)/ \sL^1(B_r(y_0))\le 1/3.$ Indeed, one has
\begin{align*}
\fint_{B_r(x_0)}\left| \rho^1(x) -\rho^1(x_0)\right|\dd x&\ge \frac{1}{\sL^1(B_r(x_0))}\int_{B_r(x_0)\setminus \tilde E^1}\left| \rho^1(x) -\rho^1(x_0)\right|\dd x\\
&\ge\frac{\rho^1(x_0)}{2}\frac{\sL^1(B_r(x_0)\setminus \tilde E^1)}{\sL^1(B_r(x_0))},
\end{align*}
and by \eqref{eq:Lebesgue} the l.h.s. tends to 0 as $r\da 0,$ so for $r>0$ small enough $\sL^1(B_r(x_0)\setminus \tilde E^1)/\sL^1(B_r(x_0))\le 1/3$. Similarly for $\rho^2$ and $\tilde E^2.$ Fix such an $r>0.$

Furthermore, set $\theta := y_0-x_0$ and define $E^1:= \tilde E^1\cap (\tilde E^2-\theta)$ and $E^2:=E^1+\theta.$ Thus,
$$\frac{\sL^1(E^2)}{\sL^1(B_r(y_0))}=\frac{\sL^1(E^1)}{\sL^1(B_r(x_0))}\ge  1 - \frac{\sL^1(B_r(x_0)\setminus \tilde E^1)}{\sL^1(B_r(x_0))}-\frac{\sL^1(B_r(y_0)\setminus\tilde E^2)}{\sL^1(B_r(y_0))}=\frac13.$$
This finishes the proof of the claim, since $r>0$ is a fixed small number.

Now we construct a new competitor $(\tilde\rho^1,\tilde\rho^2)$ in \eqref{gf:tau} which has less energy (we refer to Figure \ref{fig:1D_contrary} for the illustration) than $(\rho^1,\rho^2)$, yielding the contradiction. Define $\tilde\rho^1$ and $\tilde\rho^2$ as
$$
\tilde\rho^1=\left\{
\ba{ll}
\rho^1, & \text{in}\ \Om\setminus(E^1\cup E^2),\\
\rho^1-\d, & \text{in}\ E^1,\\
\d, & \text{in}\ E^2,
\ea
\right.\ \ 
\text{and}\ \ 
\tilde\rho^2=\left\{
\ba{ll}
\rho^2, & \text{in}\ \Om\setminus(E^1\cup E^2),\\
\d, & \text{in}\ E^1,\\
\rho^2-\d, & \text{in}\ E^2.
\ea
\right.
$$
We construct corresponding transport maps (not necessarily optimal ones), $\tilde T^1$ between $\tilde\rho^1$ and $\rho^1_0$ and $\tilde T^2$ between $\tilde\rho^2$ and $\rho^2_0$ as
$$
\tilde T^1=\left\{
\ba{ll}
T^1, & \text{in}\ \Om\setminus E^2,\\
T^1(\cdot-\theta), & \text{in}\ E^2,
\ea
\right.
\ \ \ \text{and}\ \ \ 
\tilde T^2=\left\{
\ba{ll}
T^2, & \text{in}\ \Om\setminus E^1,\\
T^2(\cdot+\theta), & \text{in}\ E^1.
\ea
\right.
$$
By construction $\tilde T^i_\#\tilde\rho^i=\rho^i_0,$ $i=1,2$. Let us use the notation $E^1_0:=T^1(E^1)$ and $E^2_0:=T^2(E^2)$, these are Borel measurable sets and subsets of $\{\rho^1_0>0\}$ and $\{\rho^2_0>0\}$ respectively.

\begin{figure}[h]
\begin{tikzpicture}
\draw [thick, orange] (1,0) to [out=90, in=110] (2.5,0.5);
\draw [thick, orange, dashed] (2.5,0.5) -- (2.5,0);
\draw [fill=orange, opacity=0.1] (1,0) to [out=90, in=110] (2.5,0.5) to (2.5,0);
\node [above, orange] at (1.75,1) {$\rho^2$};

\draw [thick, blue] (2.7,0.7) to [out=50, in=100] (4,0);
\draw [thick, blue, dashed] (2.7,0.7) -- (2.7,0);
\draw [fill=blue, opacity=0.1] (2.7,0.7) to [out=50, in=100] (4,0) to (2.7,0);
\node [above, blue] at (3.35,1) {$\rho^1$};
\path [fill=lightgray] (3.3,0) -- (3.3,0.33)  -- (3.7,0.33) -- (3.7,0); 
\draw [thick] (3.3,0.33)  -- (3.7, 0.33); 
\draw [dashed] (3.3,0) -- (3.3,0.33) (3.7,0.33) -- (3.7,0); 

\draw [line width=5pt, blue] (3.31,0) -- (3.69,0);
\draw [fill] (3.4, 0) circle  [radius=0.04];
\node [below, blue] at (3.45,0) {$x_0$};
\draw [->,dashed] (2.8, -0.5) -- (3.3, -0.1);
\node [below, blue] at (2.7, -0.4) {$E^1$};


\draw [thick, orange] (4.2,0.2) to [out=60, in=180] (4.7,1) to [out=0, in=220] (5.6, 0.55);
\draw [thick, orange, dashed] (5.6, 0.55) -- (5.6, 0); 
\draw [fill=orange, opacity=0.1] (4.2,0) to (4.2,0.2) to [out=60, in=180] (4.7,1) to [out=0, in=220] (5.6, 0.55) to (5.6,0);
\node [above, orange] at (4.9,1) {$\rho^2$};
\path [fill=lightgray] (4.7,0) -- (4.7,0.33)  -- (5.1,0.33) -- (5.1,0); 
\draw [thick] (4.7,0.33)  -- (5.1, 0.33); 
\draw [dashed] (4.7,0) -- (4.7,0.33) (5.1,0.33) -- (5.1,0); 

\draw [line width=5pt, orange] (4.71,0) -- (5.09,0);
\draw [fill] (4.8, 0) circle  [radius=0.04];
\node [below, orange] at (4.8,0) {$y_0$};
\draw [->,dashed] (5.6, -0.5) -- (5.1, -0.1);
\node [below, orange] at (5.7, -0.4) {$E^2$};

\draw [<->, thick] (3.4,-0.5) -- (4.8,-0.5);
\node [below] at (4.2,-0.5) {$\theta$};

\draw [->] (0,0) -- (7,0);
\draw [dashed] (0.8,0.33) -- (6,0.33);
\draw [fill] (1, 0.33) circle  [radius=0.04];
\node [left] at (1,0.33) {$\delta$};

\draw [thick, orange, dashed] (9.5,1.2) to [out=-10, in=180] (10.5,1.1);
\draw [thick, orange] (10.5,1.1) to [out=-10, in=130] (12,0.3);
\draw [thick, orange, dashed] (12,0.3) -- (12,0);
\draw [fill=orange, opacity=0.1] (9.5,0) to (9.5,1.2) to [out=-10, in=180] (10.5,1.1) to [out=-10, in=130] (12,0.3) to (12,0);
\node [above, orange] at (10.75,1) {$\rho^2_0$};
\draw [line width=5pt, orange] (11.4,0) -- (11.8,0);
\node [below, orange] at (11.6,0) {$E^2_0$};

\draw [thick, blue] (12.5,0.1) to [out=70, in=200] (13.5,1);
\draw [thick, blue, dashed] (13.5,1) to [out=30, in=190] (14.5,1.5);
\draw [fill=blue, opacity=0.1] (12.5,0) to (12.5,0.1) to [out=70, in=200] (13.5,1) to [out=30, in=190] (14.5,1.5) to (14.5,0);
\node [above, blue] at (13.5,1) {$\rho^1_0$};
\draw [line width=5pt, blue] (12.7,0) -- (13,0);
\node [below, blue] at (12.85,0) {$E^1_0$};

\draw [->] (8,0) -- (15,0);
\end{tikzpicture}
\caption{Ordering property for $\{\rho^1_0>0\}$ and $\{\rho^2_0>0\}$ (on the right). This is is violated by $\{\rho^1>0\}$ and $\{\rho^2>0\}$ (on the left)} \label{fig:1D_contrary}
\end{figure}
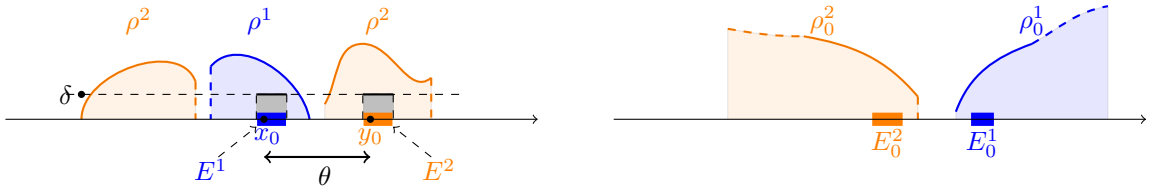 

Notice that by construction $\tilde \rho^1+\tilde\rho^2=\rho^1+\rho^2$ in $\Om$, hence 
\be\label{en:internal_change}
\cF_m(\tilde\rho^1,\tilde\rho^2)=\cF_m(\rho^1,\rho^2).
\ee
Now let us see how the other two energy terms in \eqref{gf:tau} change by considering $(\tilde\rho^1,\tilde\rho^2)$ as competitors. Let us use the notation $h(x):=\partial_x\Phi_2(x)-\partial_x\Phi_1(x)$.
First, 
\begin{align*}
E_\cG&:=\cG(\tilde\rho^1,\tilde\rho^2)-\cG(\rho^1,\rho^2)=\int_\Om\Phi_1\tilde\rho^1\dd x+\int_\Om\Phi_2\tilde\rho^2\dd x-\int_\Om\Phi_1\rho^1\dd x-\int_\Om\Phi_2\rho^2\dd x\\
&=\d\left(\int_{E^2}\Phi_1(x)\dd x-\int_{E^1}\Phi_1(x)\dd x\right)+\d \left(\int_{E^1}\Phi_2(x)\dd x-\int_{E^2}\Phi_2(x)\dd x\right)\\
&=\int_{E^1}\d \left[(\Phi_1(x+\theta)-\Phi_2(x+\theta))-(\Phi_1(x)-\Phi_2(x))\right]\dd x\\
&=\int_{E^1}\d\theta\left[\partial_x\Phi_1(\xi_{x,\theta})-\partial_x\Phi_2(\xi_{x,\theta})\right]\dd x
\end{align*}
where in the last equality we used the mean value theorem and $\xi_{x,\theta}$ is some point in $(x,x+\theta)$. 
We compute now the change in the $W_2$ terms. Recall the structure of the transport maps $\tilde T^i$, $i=1,2$ and mind that they might be not optimal. Thus one has
\begin{align*}
E_{W_2}&:=\frac{1}{2\t}W_2^2(\tilde\rho^1,\rho^1_0)+\frac{1}{2\t}W_2^2(\tilde\rho^2,\rho^2_0)-\frac{1}{2\t}W_2^2(\rho^1,\rho^1_0)-\frac{1}{2\t}W_2^2(\rho^2,\rho^2_0) \\
&\le \frac{1}{2\t}\int_{E^2}|x-T^1(x-\theta)|^2\d \dd x+\frac{1}{2\t}\int_{E^1}|x-T^2(x+\theta)|^2\d \dd x\\ 
& - \frac{1}{2\t}\int_{E^1}|x-T^1(x)|^2\d \dd x-\frac{1}{2\t}\int_{E^2}|x-T^2(x)|^2\d \dd x\\
&= \frac{1}{2\t}\int_{E^1}\left(|x+\theta-T^1(x)|^2-|x-T^1(x)|^2\right)\d \dd x\\
&+\frac{1}{2\t}\int_{E^2}\left(|x-\theta-T^2(x)|^2-|x-T^2(x)|^2\right)\d\dd x\\
&=\frac{\d\theta}{\t}\int_{E^1}(T^2(x+\theta)-T^1(x))\dd x
\end{align*}
where $\eta^i=\d\cdot\sL^1\mres E^i$ and $\eta^i_0=T^i_\#\eta^i,$ $i=1,2.$  

Now, it is easy to see that $E_\cG+E_{W_2}<0$.
Indeed, by the assumptions (2) from \eqref{fmw:1D_ordered} one has that $\partial_x\Phi_1-\partial_x\Phi_2$ nonpositive, thus 
\begin{equation}\label{eq:error}
E_\cG+E_{W_2}\le \d\theta\int_{E^1}\left\{\left[\partial_x\Phi_1(\xi_{x,\theta})-\partial_x\Phi_2(\xi_{x,\theta})\right]+\frac{1}{\t}[T^2(x+\theta)-T^1(x)]\right\}\dd x
\end{equation}
is negative since by the assumption (3) from \eqref{fmw:1D_ordered} $T^2(x+\theta)-T^1(x)<0$.

Thus one concludes that $E_\cG+E_{W_2}<0$, which together with \eqref{en:internal_change} imply that $(\tilde\rho^1,\tilde\rho^2)$ is a better competitor than $(\rho^1,\rho^2)$. This is clearly a contradiction to the uniqueness of the minimizer in \eqref{gf:tau}. Thus the ordering property for $\{\rho^1>0\}$ and $\{\rho^2>0\}$ follows.
\end{proof}

\subsection{Discussion on possibly mixed initial data}

Extending the above proposition to more general cases seems to be challenging, due to possible presence of the {\it mixing zone} $\{\rho_1>0\}\cap\{\rho_2>0\}$. The main issue, for instance to localize our argument, would be to ensure the finite propagation of mixing zone. The only available result in this direction arises in the case of the stiff pressure limit, $m=\infty$, $\Phi_i = c_i x$, and with full saturation, that is when we have the constraint $\rho_1+\rho_2 = 1$. In this case  Otto (\cite{Ott1}) showed in one dimensional setting that there is a unique description of the mixing zone that propagates with finite speed generated by the entropy solution of a conservation law. While we are not sure whether the same uniqueness results hold for our undersaturated case, we believe that the mixing zone should travel with finite speed at least in one dimension.

\subsection{Existence of a solution for \eqref{eq:PME_m}  supposing  \eqref{fmw:1D_ordered}}\label{sec:existence-1D}

\begin{theorem}\label{thm:sep_limit}
Let us suppose that $m\in(1,+\infty]$ and the setting of \eqref{fmw:1D_ordered} takes place. Let us consider $(\rho^1,\rho^2)$ to be any subsequential limit (uniformly in time w.r.t. $W_2$) of the piecewise constant interpolation curves $(\tilde\rho^{1,\t_n},\tilde\rho^{2,\t_n})$ when $\t_n\da 0$, with the initial densities $(\rho^1_0,\rho^2_0)$. Then  $(\rho^1,\rho^2)$ satisfies
$$\sL^1\left(\{\rho^1_t>0\}\cap\{\rho^2_t>0\}\right)=0, \ \forall t\in[0,T]$$
and the sets $\{\rho^1_t>0\}$ and $\{\rho^2_t>0\}$ are \emph{ordered} in the sense of  \eqref{fmw:1D_ordered} for all $t\in[0,T].$
\end{theorem}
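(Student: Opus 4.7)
The plan is to propagate the ordering property through the iterative JKO scheme and then transfer it to the continuum limit, using that in one dimension the ordering is encoded by a single scalar separator.

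First I would show by induction on $k$ that each discrete pair $(\rho^{1,\t}_k, \rho^{2,\t}_k)$ satisfies the ordering of \ref{fmw:1D_ordered}(3). The base case is the hypothesis on the initial data. For the inductive step, Lemma \ref{lem:opt_cond} (or \ref{lem:opt_cond_infty} when $m=\infty$) together with Theorem \ref{thm:reg_1step} guarantee that $(\rho^{1,\t}_k, \rho^{2,\t}_k)$ is absolutely continuous and satisfies \eqref{hyp:rho}, so that together with the inductive ordering hypothesis it meets all the assumptions of Proposition \ref{prop:separation_1D}. Applying that proposition with $(\rho^{1,\t}_k, \rho^{2,\t}_k)$ playing the role of the ``initial'' data yields the ordering of $(\rho^{1,\t}_{k+1}, \rho^{2,\t}_{k+1})$. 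Hence the piecewise constant interpolations $(\tilde\rho^{1,\t}_t, \tilde\rho^{2,\t}_t)$ have ordered supports at every $t \in [0,T]$.

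Next I would pass this property to the continuum limit by tracking a separating point. For each $\t$ and $t$, set $a^\t(t) := \sup \spt \tilde\rho^{2,\t}_t \in \overline{\Om}$, so that $\spt \tilde\rho^{2,\t}_t \subset (-\infty, a^\t(t)]$ and $\spt \tilde\rho^{1,\t}_t \subset [a^\t(t), \infty)$. Fix a countable dense set $D \subset [0,T]$; by a diagonal extraction I pass to a further subsequence $(\t_n)$ along which $a^{\t_n}(t) \to a(t) \in \overline{\Om}$ for every $t \in D$. Proposition \ref{prop:conv_unif-measure} gives $\tilde\rho^{i,\t_n}_t \weaklys \rho^i_t$ for every $t \in [0,T]$, hence for each continuity point $z$ of $z \mapsto \rho^2_t((-\infty, z])$ one has $\tilde\rho^{2,\t_n}_t((-\infty, z]) \to \rho^2_t((-\infty, z])$. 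For such a $z$ with $z > a(t)$, eventually $z > a^{\t_n}(t)$, so $\tilde\rho^{2,\t_n}_t((-\infty, z]) = M_2$, giving $\rho^2_t((-\infty, z]) = M_2$ and hence $\spt \rho^2_t \subset (-\infty, a(t)]$. A symmetric argument yields $\spt \rho^1_t \subset [a(t), \infty)$. For $t \in [0,T] \setminus D$ I pick $t_k \in D$ with $t_k \to t$: the $W_2$-continuity of $t \mapsto \rho^i_t$ combined with the compactness of $(a(t_k))_k$ in $\overline{\Om}$ produces a subsequential limit $a(t)$ that separates the supports of $\rho^i_t$ by the same CDF argument. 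The segregation $\sL^1(\{\rho^1_t>0\} \cap \{\rho^2_t>0\})=0$ then follows at once because $(-\infty, a(t)] \cap [a(t), \infty) = \{a(t)\}$ is $\sL^1$-null.

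The main difficulty lies in this continuum passage: the discrete ordering is a pointwise-in-$t$ statement about supports, and supports are in general not lower-semicontinuous under weak convergence. The one-dimensional structure is essential, since it reduces the separation to the tracking of a single scalar $a^\t(t)$ lying in the compact set $\overline{\Om}$; in higher dimension such a reduction is not available, and controlling the geometry of the separating interface at the limit would be considerably more delicate.
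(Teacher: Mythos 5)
Your proposal is correct and follows essentially the same strategy as the paper's proof: propagate the one-step ordering of Proposition~\ref{prop:separation_1D} through the discrete iterations, record a one-dimensional separator as a function of time, and transfer the separation to the limit densities using the uniform-in-time $W_2$ convergence. The differences are cosmetic. The paper tracks two separators, $I^{1,\tau}(t)=\inf\Leb(\{\tilde\rho^{1,\tau}_t>0\})$ and $I^{2,\tau}(t)=\sup\Leb(\{\tilde\rho^{2,\tau}_t>0\})$, and establishes that any subsequential limits $I^1(t),I^2(t)$ satisfy $I^2(t)\le I^1(t)$ and bound the supports of $\rho^1_t,\rho^2_t$ from the correct sides; the limiting step is carried out by a direct $W_2$ cost lower bound (if some mass of $\rho^2_t$ sat a distance $r$ to the right of $I^2(t)$, the discrete measures, being supported left of $I^{2,\tau_n}(t)\to I^2(t)$, would be at $W_2$ distance at least $(r/2)\sqrt\delta$ from $\rho^2_t$, contradicting uniform convergence). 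You instead track a single separator $a^\tau(t)=\sup\spt\tilde\rho^{2,\tau}_t$ (which coincides with $I^{2,\tau}(t)$) and pass to the limit via the CDF at continuity points, doing a diagonal extraction over a countable dense set of times and extending by $W_2$-continuity; the paper simply takes a $t$-dependent subsequence at each $t$, which suffices because the statement is pointwise in $t$. Both routes close the argument; yours requires the extra extension step off the dense set $D$, while the paper's avoids it at the cost of keeping subsequences implicit. You might also make explicit, as you do in passing, that Theorem~\ref{thm:reg_1step} guarantees the hypotheses~\eqref{hyp:rho} propagate along the scheme so that Proposition~\ref{prop:separation_1D} can indeed be iterated.
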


\begin{proof}
First, let us recall that the \emph{ordering} of $\{\rho^1_0>0\}$ and $\{\rho^2_0>0\}$ in \eqref{fmw:1D_ordered} is such that $\{\rho^2_0>0\}$ is to the left of $\{\rho^1_0>0\}.$

Second, let us underline that by Proposition \ref{prop:conv_unif-measure} (1) $\rho^i$ is obtained as the uniform limit in time w.r.t. $W_2$ (as $\t\da 0$) of the piecewise constant interpolation curves $\tilde \rho^{i,\t}$ ($i=1,2$). 
For a fixed time step $\t>0$, considering the above mentioned interpolations,  we introduce the following functions $I^{1,\t},I^{2,\t}:[0,T]\to\ov\Om$ defined as
$$I^{1,\t}(t):=\inf\left\{x: x\in \Leb\left(\{\tilde \rho^{1,\t}_t>0\}\right) \right\}\ \ 
\text{and}\ \  
I^{2,\t}(t):=\sup\left\{x: x\in \Leb\left(\{\tilde \rho^{2,\t}_t>0\}\right) \right\}.$$
These functions are well-defined, since $\Om$ is bounded and in particular Proposition \ref{prop:separation_1D} implies that $I^{2,\t}(t)\le I^{1,\t}(t)$ for all $t\in[0,T]$ and for any $\t>0$. Also, by the boundedness of $\Om$, these functions are uniformly bounded in $t$ and $\t$.

Let us take a sequence $\left(\t_n\right)_{n\ge 0}$, s.t. $\t_n\da 0$ as $n\to+\infty$ and $\sup_{t\in[0,T]}W_2(\tilde\rho^{i,\t}_t,\rho^{i}_t)\to 0$ as $n\to+\infty$, ($i=1,2$). $\left(I^{i,\t_n}(t)\right)_{n\ge 0}$ is a bounded sequence for each $t\in[0,T]$, so up to passing to a subsequence (that we do not relabel), it has a poitwise limit as $n\to+\infty$ that we denote by $I^i(t)$ for $t\in[0,T]$ and $i=1,2$. Now we show  the following.

{\it Claim:}  
\begin{itemize}
\item[(1)] $\rho^2_t(y)=0$ for a.e. $y>I^2(t)$ \hbox{ and \quad (2) }  $\rho^1_t(x)=0$ for a.e. $x<I^1(t)$.\\
\end{itemize}

{\it Proof of the claim.} Let us suppose that the claim is false, i.e. the first statement fails to be true (the proof of (2) is parallel). Then there exits $r>0$ and $\d>0$ small such that 
$$\int_{I^2(t)+r}^{I^2(t)+2r}\rho^2_t(x)\dd x>\d>0.$$ 
But, for $n\in\N$ large enough such that $\ds\left| I^{2,\t_n}(t)-I^2(t) \right|<r/2$ one has that
$$W_2^2(\tilde\rho^{2,\t_n}_t,\rho^2_t)\ge (r/2)^2\int_{I^2(t)+r}^{I^2(t)+2r}\rho^2_t(x)\dd x=(r/2)^2\d,$$
which yields a contradiction to the fact that $W_2(\tilde\rho^{2,\t}_t,\rho^{2}_t)\to 0$ as $n\to+\infty$. A similar argument can be performed to show (2), thus the claim follows.

Now, since $I^{2,\t_n}(t)\le I^{1,\t_n}(t)$ for all $n\in\N$ and $t\in[0,T],$ after passing to subsequences if necessary, one has that $I^2(t)\le I^1(t)$ for any limit points $I^1(t),I^2(t)$ and for all $t\in[0,T].$ This together with the Claim imply that $\sL^1\left(\{\rho^1_t>0\}\cap\{\rho^2_t>0\}\right)=0, \ \forall t\in[0,T]$
and that the sets $\{\rho^1_t>0\}$ and $\{\rho^2_t>0\}$ are ordered in the sense of \eqref{fmw:1D_ordered} for all $t\in[0,T].$ The result follows.
\end{proof}

\begin{remark}
When $m\in(1,+\infty),$ the above result allows to obtain the strong convergence result of the density sequences $(\tilde\rho^{i,\t_n})_{n\ge 0}$, $i=1,2$ separately. When $m=+\infty$, together with Proposition \ref{prop:separation_1D} this result is crucial to fulfill the hypotheses in Remark \ref{rmk:conv_infty}, which will lead to the precise weak form of the \eqref{eq:PME_infty} system.  
\end{remark}

\begin{theorem}\label{thm:separation_limit_1D}
Let us suppose that $m\in(1,+\infty)$ and the setting of  \eqref{fmw:1D_ordered} takes place. Consider the piecewise constant interpolations $\tilde\rho^{i,\t_n}$ ($i=1,2$) for some $(\t_n)_{n\ge 0}$ such that $\t_n\da 0$ as $n\to+\infty$. Then up to passing to a subsequence with $(\t_n)_{n\ge 0},$ $\left(\tilde\rho^{i,\t_n}\right)_{n\ge 0}$ ($i=1,2$) converges strongly in $L^{2m-1}([0,T]\times\Om)$, in particular pointwise a.e. in $[0,T]\times\Om.$
\end{theorem}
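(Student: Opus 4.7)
The plan is to combine the strong $L^{2m-1}$ compactness of the sum $\sigma^{\tau_n} := \tilde\rho^{1,\tau_n} + \tilde\rho^{2,\tau_n}$ from Proposition~\ref{prop:strong_comp} with the exact segregation of the two densities at each discrete time step (Proposition~\ref{prop:separation_1D}, iterated over $k = 0, 1, \ldots, N-1$) and at the continuum limit (Theorem~\ref{thm:sep_limit}), and then invoke a Radon--Riesz / uniform convexity argument to upgrade the convergence from the sum to each density separately.

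First I would observe that since $0 \le \tilde\rho^{i,\tau_n} \le \sigma^{\tau_n}$ and $\sigma^{\tau_n}$ is uniformly bounded in $L^{2m-1}([0,T]\times\Omega)$, each sequence $\tilde\rho^{i,\tau_n}$ is itself uniformly bounded in that reflexive space. By Banach--Alaoglu, I extract a subsequence along which $\tilde\rho^{i,\tau_n} \weakly \mu^i$ weakly in $L^{2m-1}$ for $i = 1, 2$. The uniform-in-time weak-$\star$ convergence $\tilde\rho^{i,\tau_n}_t \weaklys \rho^i_t$ from Proposition~\ref{prop:conv_unif-measure}, combined with Lebesgue's dominated convergence in $t$, identifies $\mu^i = \rho^i$.

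The key structural input comes next: the ordering property iterated in Proposition~\ref{prop:separation_1D} forces $\tilde\rho^{1,\tau_n}_t \cdot \tilde\rho^{2,\tau_n}_t = 0$ a.e.\ in $\Omega$ for every $t$, hence
$$ (\sigma^{\tau_n})^{2m-1} = (\tilde\rho^{1,\tau_n})^{2m-1} + (\tilde\rho^{2,\tau_n})^{2m-1} \quad \text{a.e.\ on } [0,T]\times\Omega, $$
and Theorem~\ref{thm:sep_limit} delivers the analogous identity for the limits $\rho^1, \rho^2$. Integrating both identities and invoking the strong $L^{2m-1}$ convergence of $\sigma^{\tau_n}$ provided by Proposition~\ref{prop:strong_comp} gives
$$ \|\tilde\rho^{1,\tau_n}\|_{L^{2m-1}}^{2m-1} + \|\tilde\rho^{2,\tau_n}\|_{L^{2m-1}}^{2m-1} \longrightarrow \|\rho^1\|_{L^{2m-1}}^{2m-1} + \|\rho^2\|_{L^{2m-1}}^{2m-1}. $$
Combined with the lower semicontinuity bounds $\liminf \|\tilde\rho^{i,\tau_n}\|_{L^{2m-1}} \ge \|\rho^i\|_{L^{2m-1}}$ coming from the weak convergence, a short $\limsup$/$\liminf$ argument applied to this two-term identity forces $\|\tilde\rho^{i,\tau_n}\|_{L^{2m-1}} \to \|\rho^i\|_{L^{2m-1}}$ for each $i = 1, 2$.

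Since $L^{2m-1}([0,T]\times\Omega)$ is uniformly convex (as $2m-1 > 1$), weak convergence together with convergence of norms upgrades to strong convergence of $\tilde\rho^{i,\tau_n}$ to $\rho^i$ in $L^{2m-1}$; one final subsequence extraction then yields pointwise a.e.\ convergence on $[0,T]\times\Omega$. The most delicate point is not any individual step above but rather the decoupling mechanism itself: it relies crucially on the \emph{exact} disjointness of supports at every discrete time (without even a surface contribution on the interface), which is precisely what the one-dimensional ordering assumption \eqref{fmw:1D_ordered} secures through Proposition~\ref{prop:separation_1D}. Outside this stable configuration, the $L^{2m-1}$ norm of the sum does not split cleanly, and the present strategy breaks down.
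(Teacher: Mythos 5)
Your proof is correct, and while it relies on the same structural input as the paper (the segregation of supports at both the discrete level, via iterating Proposition~\ref{prop:separation_1D}, and the limit level, via Theorem~\ref{thm:sep_limit}), the functional-analytic mechanism you use to upgrade from convergence of the sum to convergence of each component is genuinely different. The paper first establishes strong $L^1$ convergence of each $\tilde\rho^{i,\tau_n}$ directly, by splitting $\|\tilde\rho^{i,\tau_n}-\rho^i\|_{L^1}$ over $\{\rho^i>0\}$ and its complement and invoking segregation to reduce the first piece to the error of the sum; it then upgrades to $L^{2m-1}$ using the domination $0\le\tilde\rho^{i,\tau_n}\le\sigma^{\tau_n}$ and the strong $L^{2m-1}$ convergence of $\sigma^{\tau_n}$ (Vitali-type argument). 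You instead exploit the exact algebraic identity $(\sigma^{\tau_n})^{2m-1}=(\tilde\rho^{1,\tau_n})^{2m-1}+(\tilde\rho^{2,\tau_n})^{2m-1}$ coming from disjoint supports, combine it with weak lower semicontinuity of the norm in a $\limsup$/$\liminf$ sandwich to obtain convergence of the $L^{2m-1}$-norms, and then close with Radon--Riesz (uniform convexity of $L^{2m-1}$ since $2m-1>1$). Your route is arguably cleaner in that it only invokes segregation within each level (discrete densities disjoint from each other, limit densities disjoint from each other), whereas the paper's third equality is stated using a cross-condition relating the discrete densities to the supports of the limit densities, which is not immediate and is more safely handled as an inequality together with the weak convergence $\tilde\rho^{j,\tau_n}\rightharpoonup\rho^j$ applied to the cross term. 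Both approaches are valid, and your Radon--Riesz step is a standard and robust alternative to the paper's Vitali upgrade.
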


\begin{proof} Let us show first that $(\tilde\rho^{i,\t_n})_{n\ge 0}$ (up to passing to a subsequence) converges strongly to $\rho^i$ ($i=1,2$) in $L^1([0,T]\times\Om)$. We pass to subsequences if necessary (that we do not relabel) to ensure that $(\tilde\rho^{1,\t_n}+\tilde\rho^{2,\t_n})_{n\ge 0}$ converges strongly to $\rho^1+\rho^2$ in $L^{2m-1}([0,T]\times\Om)$ and $(\tilde\rho^{i,\t_n})_{n\ge 0}$ converges to $\rho^i$ ($i=1,2$)  weakly in $L^{2m-1}([0,T]\times\Om)$ as $n\to+\infty$. We compute
\begin{align*}
\|\tilde\rho^{i,\t_n}-\rho^i\|_{L^1([0,T]\times\Om)}&=\int_{[0,T]\times\Om}|\tilde\rho^{i,\t_n}-\rho^i|\dd t\otimes\dd x\\
&=\int_{\{\rho^i>0\}}|\tilde\rho^{i,\t_n}-\rho^i|\dd t\otimes\dd x +\int_{([0,T]\times\Om)\setminus\{\rho^i>0\}}\tilde\rho^{i,\t_n}\dd t\otimes \dd x\\
&=\int_{\{\rho^i>0\}}|\tilde\rho^{1,\t_n}+\tilde\rho^{2,\t_n}-(\rho^1+\rho^2)|\dd t\otimes \dd x+\int_{([0,T]\times\Om)\setminus\{\rho^i>0\}}\tilde\rho^{i,\t_n}\dd t\otimes \dd x\\
&\le \int_{[0,T]\times\Om}|\tilde\rho^{1,\t_n}+\tilde\rho^{2,\t_n}-(\rho^1+\rho^2)|\dd t\otimes \dd x\\
&+\int_0^T\int_{([0,T]\times\Om)\setminus\{\rho^i>0\}}\tilde\rho^{i,\t_n}\dd t\otimes \dd x\\
&\to 0, \ \ \text{as}\ n\to+\infty,
\end{align*}
where in the third equality we used the facts (see Theorem \ref{thm:sep_limit}) that $\rho^i=0$ a.e. in $\{\rho^{i+1}>0\}$ and $\tilde\rho^{i,\t_n}=0$ a.e. in $\{\rho^{i+1}>0\}$, with the convention $i+1=1,$ when $i=2$. Moreover, both terms in the last sum converge to 0. Indeed, the convergence of the first term is a consequence of the strong convergence of $(\tilde\rho^{1,\t_n}+\tilde\rho^{2,\t_n})_{n\ge 0}$ to $\rho^1+\rho^2$ in $L^{2m-1}([0,T]\times\Om)$ as $n\to+\infty$. The convergence to 0 of the last term is a consequence of the weak convergence of $\tilde\rho^{i,\t_n}$ to $\rho^i$ in $L^{2m-1}([0,T]\times\Om).$ 

This together with Theorem \ref{thm:L2H1} and Proposition \ref{prop:strong_comp} imply that (up to passing to a subsequence) $\left(\rho^{i,\t_n}\right)_{n\ge 0}$ converges strongly in $L^{2m-1}([0,T]\times\Om)$.
\end{proof}

We state now the results on the existence of weak solutions of the PDE systems \eqref{eq:PME_m} and \eqref{eq:PME_infty}.

\begin{theorem}\label{thm:existence_PME}
Let us assume that $m\in(1,+\infty)$, the hypotheses \eqref{hyp:rho} and \eqref{hyp:phi} are fulfilled and the setting in  \eqref{fmw:1D_ordered} takes place. Then the system \eqref{eq:PME_m} has a weak solution $(\rho^1,\rho^2)$ in the sense of \eqref{PME_m_weak} such that $\rho^i\in L^{2m-1}([0,T]\times\Om)\cap AC^2([0,T];(\sP^{M_i}(\Om),W_2)), i=1,2$ and $(\rho^1+\rho^2)^{m-1/2}\in L^2([0,T]; H^1(\Om))$. In addition, $\rho^i\in L^q([0,T]\times\Om)$ for all $1\le q\le m$ and $\E^i:=-\frac{m}{m-1}\partial_x(\rho^1+\rho^2)^{m-1}\rho^i-\partial_x\Phi_i\rho^i$ belongs to $L^r([0,T]\times\Om;\R^d)$ for some $1\le r< 2$ with uniform bounds  in $m$. Lastly, if $m\to+\infty,$ $q$ can be arbitrary large and $r$ can be chosen arbitrary close to 2.
\end{theorem}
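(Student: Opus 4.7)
The plan is to realize $(\rho^1, \rho^2)$ as a subsequential limit of the piecewise constant interpolations $(\tilde\rho^{1,\t_n}, \tilde\rho^{2,\t_n})$ obtained from the \eqref{gf:tau} scheme, and to upgrade the ``very weak'' limit of Theorem \ref{thm:precise} into a genuine weak solution by exploiting the one-dimensional segregation furnished by Proposition \ref{prop:separation_1D} and Theorem \ref{thm:sep_limit}. The essential new ingredient compared to the general-dimensional analysis is that in this 1D stable setting each density (not merely their sum) converges pointwise a.e., which is exactly the hypothesis needed in Remark \ref{rmk:conv}(2) to identify the separated momentum at the limit.

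First I would fix a vanishing sequence $\t_n\da 0$. Along a (not relabeled) subsequence, Proposition \ref{prop:conv_unif-measure} yields limits $\rho^i\in AC^2([0,T];(\sP^{M_i}(\Om),W_2))$ and $\E^i\in\sM^d([0,T]\times\Om)$ with $\rho^{i,\t_n},\tilde\rho^{i,\t_n}\to\rho^i$ uniformly in $W_2$ and $\E^{i,\t_n},\tE^{i,\t_n}\weaklys\E^i$; passing to the limit in the weak form \eqref{eq:cont_weak} of the continuity equation satisfied by $(\rho^{i,\t_n},\E^{i,\t_n})$ gives the distributional continuity equation for $(\rho^i,\E^i)$ with initial datum $\rho^i_0$ (preserved by the uniform $W_2$-convergence). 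To identify $\E^i$, I would invoke Theorem \ref{thm:separation_limit_1D} along a further subsequence to secure pointwise a.e. and strong $L^{2m-1}([0,T]\times\Om)$-convergence of each $\tilde\rho^{i,\t_n}$ separately; Remark \ref{rmk:conv}(2) then applies and delivers
\[
\E^i=-\tfrac{m}{m-1}\,\partial_x(\rho^1+\rho^2)^{m-1}\,\rho^i-\partial_x\Phi_i\,\rho^i,
\]
which is precisely the momentum required by \eqref{PME_m_weak}.

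The stated regularity follows by lower semicontinuity from the $\t$-uniform estimates already in place: the $L^{2m-1}$-membership of each $\rho^i$ from Proposition \ref{prop:strong_comp} combined with Theorem \ref{thm:separation_limit_1D}; the $L^2H^1$-bound on $(\rho^1+\rho^2)^{m-1/2}$ from Theorem \ref{thm:L2H1}; and $\rho^i\in L^q([0,T]\times\Om)$ for $1\le q\le m$ from \eqref{estim:LqLm} together with H\"older on the bounded set $[0,T]\times\Om$. For $\E^i$ I would use the factorization (already exploited in the proof of Remark \ref{rmk:conv}(2))
\[
\partial_x(\rho^1+\rho^2)^{m-1}\,\rho^i=\tfrac{2(m-1)}{2m-1}\,\partial_x(\rho^1+\rho^2)^{m-1/2}\cdot\frac{\rho^i}{(\rho^1+\rho^2)^{1/2}},
\]
noting that the first factor is bounded in $L^2$ by Theorem \ref{thm:L2H1} while the second is dominated by $(\rho^i)^{1/2}\in L^{2(2m-1)}$; H\"older then yields $\E^i\in L^r$ with $r=(2m-1)/m<2$, as required.

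Finally, the uniform-in-$m$ bound and the asymptotics as $m\to+\infty$ come from tracking the $m$-dependence of the constants $C_1(m),\ldots,C_5(q,m)$. Under \eqref{hyp:rho} we have $\|\rho^1_0+\rho^2_0\|_{L^m}^m\le M_1+M_2$ (using $\|\rho^1_0+\rho^2_0\|_{L^\infty}\le 1$ when $m$ is large), so the prefactor $\tfrac{1}{m-1}$ in $C_2(m)$ stays bounded (in fact vanishes) as $m\to\infty$; this keeps $C_3(m),C_4(m)$ uniformly bounded and, combined with $\|\rho^i\|_{L^{2m-1}}\le C_4(m)^{2/(2m-1)}\to 1$, yields an $m$-uniform $L^{(2m-1)/m}$-bound on $\E^i$. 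Since $(2m-1)/m\nearrow 2$ and $2m-1\to+\infty$, the claims that $q$ can be taken arbitrarily large and $r$ arbitrarily close to $2$ follow. The single delicate point throughout is the identification of $\E^i$ with the separated product form, which fails in general (see Remark \ref{rmk:conv}(1)) and is made possible here only by the 1D segregation established in Theorem \ref{thm:separation_limit_1D}.
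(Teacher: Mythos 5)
Your proof is correct and follows the paper's route closely: Proposition \ref{prop:conv_unif-measure} for compactness of the interpolations, Theorem \ref{thm:separation_limit_1D} to supply the pointwise a.e.\ convergence hypothesis, Remark \ref{rmk:conv}(2) to identify the limit momentum $\E^i$, and the $L^2H^1$-bound from Theorem \ref{thm:L2H1} combined with the factorization $\partial_x(\rho^1+\rho^2)^{m-1}\rho^i$ into the gradient of $(\rho^1+\rho^2)^{m-1/2}$ times $\rho^i/(\rho^1+\rho^2)^{1/2}$. The only genuine departure is in the H\"older step for $\E^i$: you bound $\rho^i/(\rho^1+\rho^2)^{1/2}\le(\rho^i)^{1/2}$ in $L^{2(2m-1)}$ (via $(\rho^i)^{m-1/2}\le(\rho^1+\rho^2)^{m-1/2}\in L^2$), giving the specific exponent $r=(2m-1)/m$, whereas the paper keeps the range free through the weaker but more robustly uniform estimate \eqref{estim:Lq_unif_rho} for $q\le m$, yielding $\E^i\in L^r$ for any $r\le 2m/(m+1)$. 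Both exponents approach $2$ as $m\to\infty$; your version gives a slightly larger $r$ at fixed $m$ but requires checking separately that $\|\rho^i\|_{L^{2m-1}}\le C_4(m)^{2/(2m-1)}$ stays bounded as $m\to\infty$, which you correctly observe. This is a cosmetic difference only, and the argument is sound.
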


\begin{proof}
By Theorem \ref{thm:precise} one has that the limit densities $\rho^1$ and $\rho^2$ belong to 
$$L^{2m-1}([0,T]\times\Om)\cap AC^2([0,T];(\sP^{M_i}(\Om),W_2)).$$ 
The same theorem establishes the convergence of $(\tilde\rho^{i,\t},\tE^{i,\t})$ and the precise form of the limit. By the fact that $(\tilde\rho^{i,\t},\tE^{i,\t})$ and $(\rho^{i,\t},\E^{i,\t})$ converge weakly as measures to the same limit $(\rho^i,\E^i)$ and by the fact that this latter pair solves the continuity equation \eqref{eq:cont_lim} in the weak sense \eqref{eq:cont_weak}, so does the precise limit of $(\tilde\rho^{i,\t},\tE^{i,\t})$ developed in Remark \ref{rmk:conv}(2) (notice that by Theorem \ref{thm:separation_limit_1D} the assumptions in Remark \ref{rmk:conv}(2) are fulfilled). This means in particular that the limit equation reads (for $i=1,2$) as
$$\partial_t\rho^i-\partial_x\left(\frac{m}{m-1}\partial_x(\rho^1+\rho^2)^{m-1}\rho^i+\partial_x\Phi_i\rho^i\right)=0,$$
that has to be understood in the weak sense \eqref{PME_m_weak} with no-flux boundary condition.  

Finally, let us obtain the uniform (w.r.t $m$) bounds on $\rho^i$ and $\E^i$. First, by Theorem \ref{thm:L2H1} \eqref{estim:LqLm} one has that the limit curves are bounded in  $L^p([0,T];L^m(\Om))$ for all $p\ge 1$ with uniform bounds. Take $p=m$ and any $1\le q\le m$. Then H\"older's inequality yields
\be\label{estim:Lq_unif_rho}
\|\rho^i\|_{L^q([0,T]\times\Om)}\le (T\sL^1(\Om))^{\frac{m-q}{qm}}\|\rho^i\|_{L^m([0,T]\times\Om)}\le T^{\frac{1}{q}}\sL^1(\Om)^{\frac{m-q}{qm}}C_1(m).
\ee

Second, let us write 
$$\E^i=\frac{m}{m-1/2}\partial_x(\rho^1+\rho^2)^{m-1/2}\frac{\rho^i}{(\rho^1+\rho^2)^{1/2}}+\partial_x\Phi_i\rho^i.$$
Notice that by \eqref{estim:L2grad} (Theorem \ref{thm:L2H1}) the $L^2$ bound for $\partial_x(\rho^1+\rho^2)^{m-1/2}$ remains the same after passing to the limit with the time step $\t$. Also, by the previous bound on $\rho^i$, $\frac{\rho^i}{(\rho^1+\rho^2)^{1/2}}$ is bounded uniformly in $L^{2q}([0,T]\times\Om)$ (since $\frac{\rho^i}{(\rho^1+\rho^2)^{1/2}}\le (\rho^i)^{1/2}$ a.e.). These observations, together with the fact that $\partial_x\Phi_i$ is uniformly bounded let us conclude by H\"older's inequality that
\be\label{estim:Lr_unif_E}
\|\E^i\|_{L^r}\le\frac{m}{m-1/2}\|\partial_x(\rho^1+\rho^2)^{m-1/2}\|_{L^2}\|\rho^i\|_{L^{r/(2-r)}}^{1/2}+\|\partial_x\Phi_i\|_{L^\infty}\|\rho^i\|_{L^r},
\ee
provided $1\le r<2$ and $\max\left\{\frac{r}{2-r},r\right\}\le q.$

Thus the thesis of the theorem follows.
\end{proof}

\begin{lemma}\label{lem:m_limit}
Let $m\in(1,+\infty)$ and let us consider $(\rho^{1}, \rho^{2})$ the solution of \eqref{eq:PME_m} supposing \eqref{fmw:1D_ordered} with given initial data $(\rho^1_0,\rho^2_0)$. We assume -- similarly to the hypotheses \eqref{hyp:rho} in the $m=+\infty$ case -- that the measure of $\Om$ is large enough, i.e. 
\be\label{eq:Om_size}
\sL^1(\Om)> (M_1+M_2),
\ee
where $M_i$ denotes the total mass of $\rho^i_0.$ Then -- uniformly in $m$ -- we have the following regularity estimates
\begin{itemize}
\item[(1)] $(\rho^{1}+\rho^{2})^m\in L^1([0,T]; C^{0,\alpha}(\Om))$ for some $0<\a<1/2$ and in particular it is uniformly bounded in $L^r([0,T]\times\Om)$ for some $1<r<2$;
\item[(2)] $(\rho^{1}+\rho^{2})^{m-1/2}\in L^1([0,T]; C^{0,1/2}(\Om))$ and in particular it is uniformly bounded in $L^2([0,T]\times\Om)$.
\end{itemize}
\end{lemma}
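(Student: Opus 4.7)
The strategy hinges on the uniform-in-$m$ $L^2([0,T];H^1(\Omega))$ bound for $u := (\rho^1+\rho^2)^{m-1/2}$ furnished by Theorem~\ref{thm:L2H1}. As a preliminary step I would check that the constants $C_1(m), C_2(m), C_3(m), C_4(m)$ in that theorem stay bounded as $m\to\infty$. Under the volume condition \eqref{eq:Om_size} together with the natural $L^\infty$ initial bound $\rho_0^1+\rho_0^2\le 1$ (implicit in the compatibility with the $m=\infty$ setup), one has $\|\rho_0^1+\rho_0^2\|_{L^m}^m\le M_1+M_2$, so $\tfrac{2}{m-1}\|\rho_0^1+\rho_0^2\|_{L^m}^m\to 0$ and each $C_i(m)$ is uniformly controlled.

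For (2), the plan is to combine the pointwise-in-$t$ one-dimensional Morrey embedding $H^1(\Omega)\hookrightarrow C^{0,1/2}(\overline\Omega)$ with Cauchy--Schwarz in time:
\[
\int_0^T \|u(t,\cdot)\|_{C^{0,1/2}(\overline\Omega)}\,dt \;\le\; T^{1/2}\,\|u\|_{L^2([0,T]; H^1(\Omega))} \;\le\; C.
\]
The $L^2([0,T]\times\Omega)$ bound is then a direct consequence of $H^1\hookrightarrow L^2$.

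For (1), I would use the identity $(\rho^1+\rho^2)^m = u^{\beta}$ with $\beta:=2m/(2m-1)\in(1,2]$. Since in 1D one also has $H^1(\Omega)\hookrightarrow L^\infty(\Omega)$ with $\|u(t)\|_{L^\infty}\le C\|u(t)\|_{H^1}$, the mean-value inequality applied to $s\mapsto s^\beta$ on $[0,\|u(t)\|_{L^\infty}]$ (whose Lipschitz constant is $\beta\|u(t)\|_{L^\infty}^{\beta-1}$) combined with composition with $u(t,\cdot)\in C^{0,1/2}$ gives
\[
[u(t)^\beta]_{C^{0,1/2}} \;\le\; \beta\,\|u(t)\|_{L^\infty}^{\beta-1}\,[u(t,\cdot)]_{C^{0,1/2}} \;\le\; C\,\|u(t)\|_{H^1}^\beta,
\]
with $C$ independent of $m$ since $\beta-1\in(0,1]$. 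Including the analogous sup-norm bound on $u^\beta$ and integrating in time via Hölder's inequality (legitimate because $\beta\le 2$) yields
\[
\int_0^T \|u(t)^\beta\|_{C^{0,1/2}(\overline\Omega)}\,dt \;\le\; C\,T^{1-\beta/2}\,\|u\|_{L^2([0,T];H^1)}^{\beta} \;\le\; C,
\]
uniformly in $m$; this proves even the endpoint $C^{0,1/2}$ version, and a fortiori the claim for any $\alpha<1/2$. For the $L^r([0,T]\times\Omega)$ part, rewriting $\int\!\int(\rho^1+\rho^2)^{mr}=\int\!\int u^{2mr/(2m-1)}$ and noting that $2mr/(2m-1)\le 2$ whenever $r\le 2-1/m$, Hölder together with the uniform $L^2$ bound on $u$ delivers the desired bound for any fixed $r\in(1,2)$ once $m\ge 1/(2-r)$.

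The main technical obstacle is the uniformity of the constants $C_i(m)$ in $m$, which rests on the $L^\infty$ control of $\rho_0^1+\rho_0^2$ natural in the incompressible regime; once this is secured, the rest of the argument is a direct application of one-dimensional Sobolev embeddings together with the chain rule.
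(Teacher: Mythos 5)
Your proposal contains a genuine gap: the uniformity in $m$ of the $L^2([0,T];H^1(\Omega))$ bound on $u := (\rho^1+\rho^2)^{m-1/2}$ does \emph{not} follow from Theorem~\ref{thm:L2H1}. While you correctly observe that $\|\rho^1_0+\rho^2_0\|_{L^m}^m\le M_1+M_2$ keeps $C_2(m)$ and hence the gradient constant $C_3(m)$ uniformly bounded, the $L^2$-norm constant $C_4(m)$ contains the term $T\,C_1(m)^{2m-1}\sL^d(\Omega)^{1/m-1}$, and since $C_1(m)^m = (2m-2)(M_1\|\Phi_1\|_\infty+M_2\|\Phi_2\|_\infty)+\|\rho^1_0+\rho^2_0\|_{L^m}^m$ grows linearly in $m$ whenever the potentials are nontrivial, one has $C_1(m)^{2m-1}\sim m^2$ and therefore $C_4(m)\to+\infty$. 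Consequently you cannot invoke $H^1\hookrightarrow C^{0,1/2}$ or $H^1\hookrightarrow L^\infty$ for $u(t,\cdot)$ with $m$-independent constants, and both your item (1) and item (2) arguments collapse at the step where you bound $\|u(t)\|_{L^\infty}$ or $\|u(t)\|_{H^1}$ uniformly.

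The missing idea is precisely what the volume condition \eqref{eq:Om_size} is for, and it is the core of the paper's proof. The paper only uses the uniform \emph{gradient} estimate (the $C_3(m)$ bound on $\|\partial_x(\rho^1+\rho^2)^{m-1/2}\|_{L^2}$, and for item (1) a uniform $L^r$ bound on $\partial_x(\rho^1+\rho^2)^m$ obtained from the momentum identity $\E^1+\E^2 = -\partial_x(\rho^1+\rho^2)^m - \partial_x\Phi_1\rho^1-\partial_x\Phi_2\rho^2$). To convert a gradient bound into control of the function itself one must pin down the value somewhere; this is done by showing (Claim 1) that for every $t$ there is a ball $B_r(x_0)$ on which $\rho^1_t+\rho^2_t\le 1-\e$ a.e.\ (a consequence of $\sL^1(\Om)>M_1+M_2$ and mass conservation), and (Claim 2) that the time-averaged oscillation of $(\rho^1+\rho^2)^m$ (respectively $(\rho^1+\rho^2)^{m-1/2}$) is controlled by the $L^r$ (respectively $L^2$) gradient bound via a 1D estimate. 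Together these control the spatial average and hence, via Poincar\'e--Wirtinger, the full norm. Your argument never uses the volume hypothesis \eqref{eq:Om_size}, which should have been a warning sign: the lemma is false without it, since otherwise one can fully saturate $\rho^1+\rho^2\equiv 1$ and $(\rho^1+\rho^2)^m$ need not vanish anywhere, making the gradient bound insufficient to control $(\rho^1+\rho^2)^m$ in any $L^p$ uniformly in $m$.
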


\begin{proof}
We show (1). Using the notations from Theorem \ref{thm:existence_PME}, one has that 
$$\E^1+\E^2=-\partial_x(\rho^1+\rho^2)^m-\partial_x\Phi_1\rho^1-\partial_x\Phi_2\rho^2.$$
By the estimations from Theorem \ref{thm:existence_PME} we know that the quantities $\E^1+\E^2$ and $\partial_x\Phi_1\rho^1+\partial_x\Phi_2\rho^2$ are bounded uniformly in $L^r([0,T]\times\Om;\R^d)$ for some $1<r<2$.

This implies first that $\partial_x(\rho^1+\rho^2)^m$ is uniformly bounded in $L^r([0,T]\times\Om;\R^d)$. Furthermore, the Poincar\'e-Wirtinger inequality yields that 
\be\label{eq:poincare}
\left\|(\rho^1+\rho^2)^m-\frac{1}{T\sL^1(\Om)}\int_0^T\int_\Om(\rho^1+\rho^2)^m\dd x\dd t\right\|_{L^r([0,T]\times\Om)}\le\left\|\partial_x(\rho^1+\rho^2)^m\right\|_{L^r([0,T]\times\Om)}.
\ee
So the l.h.s. is uniformly bounded. Let us show that the average of $(\rho^1+\rho^2)^m$ is uniformly bounded. 

Let us fix $0<\e<1-(M_1+M_2)/\sL^1(\Om)$.

{\it Claim 1. For every $t\in[0,T]$ there exists $r>0$ and $x_0\in\Om$ such that $\rho^1_t+\rho^2_t\le 1-\e$ a.e. in $B_r(x_0).$}

Fix  $t\in[0,T]$. Let us suppose that the claim is not true. Then in every ball $B_r(x_0)$, $\rho^1_t+\rho^2_t> 1-\e$ a.e. Since $\Om$ is a bounded interval, this in particular means that $\rho^1_t+\rho^2_t>1-\e$ a.e. in $\Om$. Furthermore,
$$\int_\Om (\rho^1_t+\rho^2_t)\dd x>(1-\e)\sL^1(\Om)> M_1+M_2,$$
and this is clearly a contradiction (by the choice of $\e$) to fact that $\int_\Om (\rho^1_t+\rho^2_t)\dd x=M_1+M_2$, thus the claim follows.

{\it Claim 2. $(\rho^{1}+\rho^{2})^m\in L^1([0,T]; C^{0,\alpha}(\Om))$ for some $0<\a<1/2$. In particular, for a.e. $t\in[0,T]$, $(\rho^1_t+\rho_t^2)^m$ has bounded oscillation uniformly in $m$.}

Notice that for $f:[0,T]\times\Om\to\R$ measurable such that $\partial_x f\in L^r([0,T]\times\Om)$ for some $r>1$ and $a,b\in\Om,\ a<b$ defining 
$$
{\rm osc}_{[a,b]} f_t:= \sup_{x\in [a,b]} f_t(x) - \inf_{x\in[a,b]} f_t(x),
$$
one has the estimate
\begin{align*}
\int_0^T ({\rm osc }_{[a,b]} f_t)\dd t & \leq \int_0^T \int_a^b | \partial_xf_t|\dd x\dd t \leq \left(\int_0^T \int_{a}^b |\partial_x f_t|^r \dd x\dd t\right)^{\frac1r} (T|a-b|)^{\frac{1}{r'}}\\
&\le \left(\int_0^T \int_{\Om} |\partial_x f_t|^r\dd x\dd t\right)^{\frac1r} (T|a-b|)^{\frac{1}{r'}},
\end{align*}
where $1/r+1/r'=1$. Since the integrand on the l.h.s. of the previous inequality is non-negative, this implies that 
$${\rm osc }_{[a,b]} f_t\le C|a-b|^\frac{1}{r'}$$
for a.e. $t\in[0,T],$ hence in particular $f_t$ has bounded oscillation, with a constant that depends only on $\|\partial_x f\|_{L^r}$ and $T$. Applying this reasoning to $(\rho^1+\rho^2)^m$, one obtains the statement of the claim.

Now Claim 1 and Claim 2 imply that $(\rho^1_t+\rho^2_t)^m$ is uniformly bounded for a.e. $t\in T$. This means furthermore that the average $\frac{1}{T\sL^1(\Om)}\int_0^T\int_\Om(\rho^1+\rho^2)^m\dd x\dd t $ is uniformly bounded, which together with \eqref{eq:poincare} implies (1).

\smallskip

The proof of (2) follows the same lines. The bound $\|\partial_x(\rho^{1,m}+\rho^{2,m})^{m-1/2}\|_{L^2([0,T]\times\Om)}\le C_3(m)$ in \eqref{estim:L2grad} from Theorem \ref{thm:L2H1} remains uniform, since $C_3(m)$ remains bounded uniformly when $m\to+\infty$. This bound is enough to perform the same analysis as in (1), thus we can conclude the same way.

\end{proof}

\begin{theorem}\label{thm:existence_PME_inf}
Let us assume that $m=+\infty$, the hypotheses \eqref{hyp:rho} and \eqref{hyp:phi} are fulfilled and the setting in \eqref{fmw:1D_ordered} takes place. Then the system \eqref{eq:PME_infty} 
has a weak solution $(\rho^1,\rho^2,p)$ in the sense of \eqref{PME_m_weak} such that $\rho^i\in L^{\infty}([0,T]\times\Om)\cap AC^2([0,T];(\sP^{M_i}(\Om),W_2)), i=1,2$ and $p\in L^2([0,T];H^1(\Om))$. One has moreover $\rho^1+\rho^2\le 1$ a.e. in $[0,T]\times\Om$,  $p\ge 0$, 
$p(1-\rho^i)=0$ a.e. in $\{\rho^i>0\}$, $i=1,2$. \end{theorem}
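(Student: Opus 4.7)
The plan is to follow the blueprint of Theorem~\ref{thm:existence_PME}, replacing the compactness tools for finite $m$ with the pressure-based machinery developed for $m=+\infty$. Starting from $(\rho^1_0,\rho^2_0)$, I would run the scheme \eqref{gf:tau} to produce the discrete densities $(\rho^{1,\t}_k,\rho^{2,\t}_k)_{k=0}^N$ and the associated pressures $p^\t_k$ from Lemma~\ref{lem:opt_cond_infty}, and assemble the piecewise constant interpolations $\tilde\rho^{i,\t}$, $\tilde p^\t$ and the momenta $\tE^{i,\t}$ from \eqref{interp:const}--\eqref{def:press_interp}. Picking any sequence $\t_n\da 0$, Proposition~\ref{prop:conv_unif-measure} yields (after extraction) $\tilde\rho^{i,\t_n}\to \rho^i$ uniformly in $[0,T]$ with respect to $W_2$ and $\tE^{i,\t_n}\weaklys \E^i$ in $\sM^d([0,T]\times\Om)$, while Lemma~\ref{lem:press_bound} provides a further extraction with $\tilde p^{\t_n}\weakly p$ in $L^2([0,T];H^1(\Om))$. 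The pointwise constraint $\tilde\rho^{1,\t_n}+\tilde\rho^{2,\t_n}\le 1$ is preserved in the weak-$\star$ limit, so $\rho^1+\rho^2\le 1$ a.e.\ and $\rho^i\in L^\infty([0,T]\times\Om)$; the sign $p\ge 0$ passes as well, and Theorem~\ref{thm:precise_infty} supplies $p(1-(\rho^1+\rho^2))=0$ a.e.

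The key step is to upgrade $\tE^{i,\t_n}\weaklys\E^i$ to the precise form dictated by Definition~\ref{def:weak_and_very_weak}, for which I need to verify the hypotheses of Remark~\ref{rmk:conv_infty}. At the discrete level this requires $\sL^1(\{\tilde\rho^{1,\t}_t>0\}\cap\{\tilde\rho^{2,\t}_t>0\})=0$ for every $t$, which follows by iterating Proposition~\ref{prop:separation_1D}: since the ordering property \ref{fmw:1D_ordered}(3) of $(\rho^{1,\t}_k,\rho^{2,\t}_k)$ is preserved by each JKO step, the hypotheses of Proposition~\ref{prop:separation_1D} are reinstated at every node, so segregation propagates to all $k$ and, by the definition of the interpolant, to all $t\in[0,T]$. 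Theorem~\ref{thm:sep_limit} then promotes the property to the limit, giving $\sL^1(\{\rho^1_t>0\}\cap\{\rho^2_t>0\})=0$ for every $t$. Remark~\ref{rmk:conv_infty} applies and delivers $\tE^{i,\t_n}\weaklys -\nabla p-\nabla\Phi_i\rho^i$ together with $\nabla p=\rho^i\nabla p$ a.e., identifying $\E^i$ as the momentum required by the weak formulation.

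With these identifications in hand, passing to the limit in the weak continuity equation \eqref{eq:cont_weak} satisfied by $(\tilde\rho^{i,\t_n},\tE^{i,\t_n})$ yields \eqref{PME_m_weak} for $(\rho^i,p)$; the absolutely continuous regularity $\rho^i\in AC^2([0,T];(\sP^{M_i}(\Om),W_2))$ comes from the uniform Benamou--Brenier action bound via Lemma~\ref{lem:estimates} and the lower semicontinuity argument used at the end of Section~\ref{sec:PDE_m}. The remaining saturation identity $p(1-\rho^i)=0$ a.e.\ on $\{\rho^i>0\}$ is then immediate: on $\{\rho^i>0\}$ the limit segregation forces $\rho^{3-i}=0$ a.e., so $\rho^1+\rho^2=\rho^i$ and the already-established $p(1-(\rho^1+\rho^2))=0$ reduces to $p(1-\rho^i)=0$.

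The main delicate point of the plan is the inductive segregation at the discrete level: Proposition~\ref{prop:separation_1D} is formulated for a single JKO step, and one must argue that the output $(\rho^{1,\t}_1,\rho^{2,\t}_1)$ again satisfies the ordering assumption \ref{fmw:1D_ordered}(3), not merely the weaker disjointness condition, so that the proposition can be reapplied. This is exactly what the ordering conclusion of Proposition~\ref{prop:separation_1D} provides, and it is what unlocks Remark~\ref{rmk:conv_infty} and the clean momentum identification; the rest of the proof is a bookkeeping assembly of the estimates from Section~\ref{sec:PDE_m}.
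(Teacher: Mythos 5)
Your proposal is correct and follows essentially the same route as the paper: run the scheme, invoke Proposition~\ref{prop:conv_unif-measure} and Lemma~\ref{lem:press_bound} for compactness, use Theorem~\ref{thm:sep_limit} (and its reliance on iterating Proposition~\ref{prop:separation_1D}) to verify the hypotheses of Remark~\ref{rmk:conv_infty}, and combine with Theorem~\ref{thm:precise_infty} to identify the momenta and the saturation relation. You have correctly isolated the inductive segregation as the crucial step, which the paper handles implicitly through Theorem~\ref{thm:sep_limit}. One small imprecision: the weak continuity equation \eqref{eq:cont_weak} is satisfied by the \emph{continuous} (McCann) interpolations $(\rho^{i,\t},\E^{i,\t})$, not by the piecewise constant pair $(\tilde\rho^{i,\t},\tE^{i,\t})$; one concludes by observing that both converge to the same limit $(\rho^i,\E^i)$, which therefore solves \eqref{eq:cont_weak}, while the piecewise constant interpolants are used solely to identify the precise form of $\E^i$.
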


\begin{proof}
Let us take a positive vanishing sequence of time steps $(\t_n)_{n\ge0}$ and consider the piecewise constant and continuous interpolations of density curves $(\tilde\rho^{i,\t_n})_{n\ge 0}$, $(\rho^{i,\t_n})_{n\ge 0}$ and momenta $(\tE^{i,\t_n})_{n\ge 0}$, $(\E^{i,\t_n})_{n\ge 0}$. By Proposition \ref{prop:conv_unif-measure} (up to passing to subsequences) these objects converge (to $\rho^i$ and $\E^i$ respectively) in the appropriate weak senses and one has a limit system as in \eqref{eq:cont_lim}-\eqref{eq:cont_lim_2}. To identify a precise form of the system, we use the fact that the momentum sequences $(\tE^{i,\t_n})_{n\ge 0}$ and $(\E^{i,\t_n})_{n\ge 0}$ and the curve sequences $(\tilde\rho^{i,\t_n})_{n\ge 0}$ and $(\rho^{i,\t_n})_{n\ge 0}$ converge to the same limit. 

Now observe that the setting in \eqref{fmw:1D_ordered} implies that Theorem \ref{thm:sep_limit} can be applied, so the assumptions of Remark \ref{rmk:conv_infty} are fulfilled. This implies that the limit momenta have the form 
$$\E^i=-\partial_x p-\partial_x\Phi_i\rho^i=-\partial_x p\rho^i-\partial_x\Phi_i\rho^i, \ \ i=1,2.$$
Here $p\in L^2([0,T];H^1(\Om))$ is the weak limit of $(p^{\t_n})_{n\ge 0}$ obtained in Theorem \ref{thm:precise_infty}, so in particular $p\ge 0$ and $p(1-(\rho^1+\rho^2))=0$ a.e. in $[0,T]\times\Om.$ These imply that the limit system has the form
$$\partial_t\rho^i-\partial_x\left(\partial_x p\rho^i+\partial_x\Phi_i\rho^i\right)=0,\ \ i=1,2,$$
which has to be understood in the weak sense with no-flux boundary conditions. 

At last, since Theorem \ref{thm:sep_limit} implies in particular that $\sL^1(\{\rho^1_t>0\}\cap \{\rho^2_t>0\})=0$ for all $t\in[0,T]$, the relation $p(1-(\rho^1+\rho^2))=0$ a.e. in $[0,T]\times\Om$ reads as $p(1-\rho^i)=0$ a.e. in $\{\rho^i>0\}$, $i=1,2$.

\end{proof}

It is not hard to verify that, for a fixed $\tau>0$, the functionals in \eqref{gf:tau} $\Gamma$-convergence 
as $m\to\infty$ to the functional where $\cF_m$ is replaced by $\cF_\infty$. Thus, it is natural to pose the question about the convergence of the corresponding gradient flow solutions in the spirit of Sandier and Serfaty (see \cite{SanSer}). Unfortunately, one cannot use these kinds of results directly, and obtain the convergence of the continuum solutions of \eqref{eq:PME_m} to the solutions of \eqref{eq:PME_infty}, mainly due to the lack of uniqueness. Hence, it is necessary to proceed by studying the convergence of the continuum solutions at the PDE level. This will be addressed in the next section.

\subsection{Passing to the limit as $m\to+\infty$}\label{sec:limit_with_m}
We will show that solutions of \eqref{eq:PME_m} converge, along a subsequence as $m\to+\infty$, to a solution of \eqref{eq:PME_infty}. 

We suppose that the initial data satisfy
$$\|\rho^1_0+\rho^2_0\|_{L^\infty}\le 1.$$ 

Let us recall (see Definition \ref{def:weak_and_very_weak}) that a triple of nonnegative functions $(\rho^{1,\infty},\rho^{2,\infty},p^\infty)$, such that 
$\rho^{i,\infty}\in AC^2([0,T];\sP^{M_i}(\Om))$, $\|\rho^{1,\infty}+\rho^{2,\infty}\|_{L^{\infty}([0,T]\times \Om )} \leq 1$,  and $p^\infty\in L^2([0,T];H^1(\Om)),$ is a weak solution of \eqref{eq:PME_infty} if for any $\phi\in C^1([0,T]\times \Om)$ and $0<s<t\leq T$ we have
$$
-\int_s^t\int_\Om\rho^{i,\infty}\partial_t\phi\dd x\dd \t-\int_s^t\int_\Om (\partial_x p+\partial_x\Phi_i)\rho^{i,\infty}\cdot\partial_x\phi\dd x\dd \t=\int_\Om\rho^{i,\infty}(s,x)\phi(s,x)\dd x-\int_\Om\rho^{i,\infty}(t,x)\phi(t,x)\dd x,
$$
and $p^\infty(1-\rho^{1,\infty}-\rho^{2,\infty})=0$ a.e. in $[0,T]\times\Om$.

\begin{theorem}\label{thm:limit_m}
Let $(\rho^{1,m},\rho^{2,m})$ be a weak solution to \eqref{eq:PME_m} in the setting of \eqref{fmw:1D_ordered} with initial data satisfying $\|\rho^1_0+\rho^2_0\|_{L^\infty}\le 1$, where now we have noted $m$ as a parameter. We assume moreover that the geometric condition \eqref{eq:Om_size} holds true for the domain $\Om$.

Then, there exist $\rho^{i,\infty}\in L^\infty([0,T]\times\Om)\cap AC^2([0,T];(\sP^{M_i}(\Om),W_2))$ and $p^\infty\in L^r([0,T]; W^{1,r}(\Om))$ for all $r\in(1,2)$, such that along a subsequence when $m\to+\infty$,  $\rho^{i,m}\weakly \rho^{i,\infty}$ weakly in $L^q([0,T]\times\Om)$ for any $q\ge 1$, $(\rho^{1,m} + \rho^{2,m})^{m}\weakly p^\infty$ in $L^r([0,T]; W^{1,r}(\Om))$ and $\E^{i,m}\weakly\partial_x p^\infty\rho^{i,\infty}+\partial_x\Phi_i\rho^{i,\infty}$ weakly in $L^r([0,T]\times \Om)$.
 
Moreover $(\rho^{1,\infty},\rho^{2,\infty},p^\infty)$ is a weak solution of \eqref{eq:PME_infty}.
\end{theorem}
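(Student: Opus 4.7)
The strategy is to extract subsequential weak and strong limits using the uniform-in-$m$ estimates from Theorem~\ref{thm:existence_PME} and Lemma~\ref{lem:m_limit}, pass to the limit in the weak formulation \eqref{PME_m_weak}, and separately establish the incompressibility constraint $\rho^{1,\infty}+\rho^{2,\infty}\le 1$ and the complementarity $p^\infty(1-\rho^{1,\infty}-\rho^{2,\infty})=0$. First, under $\|\rho^1_0+\rho^2_0\|_{L^\infty}\le 1$, the constant $C_1(m)$ from \eqref{const:Lmbound} stays bounded as $m\to\infty$, which together with H\"older's inequality gives uniform bounds on $\rho^{i,m}$ in $L^q([0,T]\times\Om)$ for every fixed $q\ge 1$. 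Lemma~\ref{lem:m_limit} provides uniform bounds on $p^m:=\frac{m}{m-1}(\rho^{1,m}+\rho^{2,m})^{m-1}$ in $L^r([0,T];W^{1,r}(\Om))$ for some $1<r<2$, and \eqref{estim:Lr_unif_E} gives uniform bounds on $\E^{i,m}$ in $L^r$. Together with the $AC^2$-$W_2$ equicontinuity in time, one extracts along a subsequence: $\rho^{i,m}\weakly \rho^{i,\infty}$ weakly in every $L^q$ and uniformly in time for $W_2$; $p^m\weakly p^\infty$ in $L^r(W^{1,r})$; $\E^{i,m}\weakly \E^i$ in $L^r$. Nonnegativity of $p^\infty$ and the initial condition are preserved.

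The $L^\infty$ constraint follows from the lower semicontinuity of the $L^q$ norm: for each fixed $q$, $\|\rho^{1,\infty}+\rho^{2,\infty}\|_{L^q}\le\liminf_m\|\rho^{1,m}+\rho^{2,m}\|_{L^q}\le (T\sL^1(\Om))^{1/q}C_1(m)\to(T\sL^1(\Om))^{1/q}$ as $m\to\infty$, so sending $q\to\infty$ yields $\rho^{1,\infty}+\rho^{2,\infty}\le 1$ a.e. To pass to the limit in the nonlinear flux $\partial_x p^m\cdot\rho^{i,m}$ one needs strong convergence of the densities. The spatial $L^2(H^1)$ regularity of $(\rho^{1,m}+\rho^{2,m})^{m-1/2}$ and the $C^{0,\alpha}$ regularity of $p^m$ from Lemma~\ref{lem:m_limit} supply $m$-independent spatial compactness, which combined with the $W_2$-$AC^2$ time equicontinuity and a refined Aubin--Lions argument (Theorem~\ref{thm:aubin_refined}) in the spirit of Proposition~\ref{prop:strong_comp} yields strong convergence of $\rho^{1,m}+\rho^{2,m}$ in $L^1([0,T]\times\Om)$ and pointwise a.e.\ along a further subsequence. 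The segregation result Theorem~\ref{thm:sep_limit} then upgrades this to strong convergence of each $\rho^{i,m}$. With this in hand, each term of \eqref{PME_m_weak} passes to the limit: the time term linearly, the drift $\partial_x\Phi_i\rho^{i,m}\partial_x\phi$ by strong convergence, and the pressure-transport term as a weak-strong pairing ($\partial_x p^m\weakly\partial_x p^\infty$ in $L^r$ against $\rho^{i,m}\partial_x\phi$ strong in $L^{r'}$, valid by the uniform $L^q$ bounds for $q\ge r'$).

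For the complementarity $p^\infty(1-\rho^{1,\infty}-\rho^{2,\infty})=0$, fix $\delta>0$ and set $E_\delta:=\{\rho^{1,\infty}+\rho^{2,\infty}<1-\delta\}$. The pointwise convergence implies that at a.e.\ point of $E_\delta$ one has $\rho^{1,m}+\rho^{2,m}<1-\delta/2$ for all $m$ large enough, hence $p^m\le\frac{m}{m-1}(1-\delta/2)^{m-1}\to 0$ pointwise on a full-measure subset of $E_\delta$. Combined with the uniform $L^r$ bound ($r>1$) and Vitali's theorem, $p^m\to 0$ in $L^1(E_\delta)$, and by uniqueness of weak limits $p^\infty=0$ a.e.\ on $E_\delta$; sending $\delta\da 0$ completes the identification. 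The main obstacle I anticipate is the strong compactness step: the natural functional $\rho\mapsto\|\rho^{m-1/2}\|_{H^1}$ employed in Proposition~\ref{prop:strong_comp} depends on $m$ through the exponent, so one must argue via the $m$-uniform $C^{0,\alpha}$ regularity of $p^m$ from Lemma~\ref{lem:m_limit} (or via a $W_2$-transport estimate) to extract genuinely $m$-independent spatial precompactness before Aubin--Lions can be applied.
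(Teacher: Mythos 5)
Your proof hinges on establishing strong (indeed pointwise a.e.) convergence of $\rho^{i,m}$ as $m\to+\infty$, which is precisely what the paper cannot obtain and deliberately works around. The uniform estimates available in this regime control $(\rho^{1,m}+\rho^{2,m})^{m-1/2}$ in $L^2([0,T];H^1(\Om))$ and $p_m:=(\rho^{1,m}+\rho^{2,m})^m$ in $L^1([0,T];C^{0,\alpha}(\Om))$; these are spatial estimates for nonlinear transforms whose exponents blow up with $m$, and compactness of such transforms does not translate into compactness of the density itself. For instance, $p_m$ can converge uniformly to zero while $\rho^{1,m}+\rho^{2,m}$ oscillates arbitrarily below $1$, so the $m$-uniform H\"older regularity of $p_m$ that you invoke as a fallback cannot produce a.e.\ convergence of $\rho^{1,m}+\rho^{2,m}$. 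Accordingly, the Aubin--Lions step you sketch (Proposition~\ref{prop:strong_comp} applied across $m$ rather than across $\tau$) does not go through, and with it collapse both the weak-strong pairing in the pressure-transport term and the pointwise argument you give for the complementarity condition. Note the paper's own proof explicitly states that \emph{both} factors in $-\frac{m}{m-1}\partial_x(\rho^{1,m}+\rho^{2,m})^{m-1}\rho^{i,m}$ converge only weakly.

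The paper instead exploits the one-dimensional segregated geometry. It decomposes $[s,t]\times\Om$ into $\cJ^-\cup\mathcal{I}\cup\cJ^+$ around the set $\mathcal{I}$ of interface limit points. On $\cJ^-$ only $\rho^{2,m}$ is present, so the problematic product collapses to a perfect derivative, $\frac{m}{m-1}\partial_x(\rho^{2,m})^{m-1}\rho^{2,m}=\partial_x(\rho^{2,m})^m=\partial_x p_m$, after which passing to the weak limit is an integration by parts against a fixed test function. On $\cJ^+$ both the momentum and the density vanish. The interface contribution $\int_{\mathcal{I}}\E^{2,m}\cdot\partial_x\phi$ is shown to be arbitrarily small via the $A_n$/$B_n$ width decomposition and a uniform $W_2$-Cauchy estimate. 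The complementarity is then obtained not from pointwise convergence of the densities but from Claim 2 (smallness of $\int p_m(1-\rho^{2,m})$ and $\int\tilde p_m(1-\rho^{2,m})$) combined with the strong-weak pairing of $\tilde p_m=(\rho^{1,m}+\rho^{2,m})^{m-1/2}$ (which \emph{does} have $m$-uniform $L^2(H^1)$ compactness) against the weakly convergent $1-\rho^{2,m}$, following the argument of \cite{MauRouSan1}. These two ideas -- the algebraic collapse of $\nabla p\cdot\rho$ to $\nabla(\rho^m)$ on the segregated sides, and pairing strong compactness of the pressure-type quantity with weak compactness of the density -- are the missing ingredients in your proposal. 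Your alternative derivation of $\|\rho^{i,\infty}\|_{L^\infty}\le 1$ via lower semicontinuity of the $L^q$ norms and $q\to\infty$ is fine and is a legitimate variant of the paper's Chebyshev argument, but the remainder of the argument needs to be rebuilt around weak convergence of the densities.
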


\begin{proof}
Let us recall the weak formulation of the system \eqref{eq:PME_m}. 
\be\label{eq:weak_passing_limit}
-\int_s^t\int_\Om\rho^{i,m}\partial_t\phi\dd x\dd \t-\int_s^t\int_\Om\E^{i,m}\cdot\partial_x\phi\dd x\dd \t=\int_\Om\rho^{i,m}_s(x)\phi(s,x)\dd x-\int_\Om\rho^{i,m}_t(x)\phi(t,x)\dd x,
\ee
for all $0\le s<t\le T$ and $\phi\in C^1([0,T]\times\Om),$ where $\E^{i,m}:=-\left(\frac{m}{m-1}\partial_x(\rho^{1,m}+\rho^{2,m})^{m-1}+\partial_x\Phi_i\right)\rho^{i,m}.$ 

First, by the assumption $\|\rho^1_0+\rho^2_0\|_{L^\infty}\le 1$, the bounds for $\rho^{i,m}$ in $AC^2([0,T];(\sP^{M_i}(\Om),W_2))$ (see Lemma \ref{lem:estimates} and \eqref{const:metric_der}) are uniform in $m$, so clearly up to passing to a subsequence with $m$, $(\rho^{i,m})_{m>1}$ converges weakly-$\star$ to some $\rho^{i,\infty}\in AC^2([0,T];(\sP^{M_i}(\Om),W_2))$. In particular this convergence is uniform in time w.r.t. $W_2$. By the uniform estimation \eqref{estim:Lq_unif_rho}, it follows that along a subsequence $(\rho^{i,m})_{m>1}$ converges weakly  to $\rho^{i,\infty}$ in $L^q([0,T]\times\Om)$ for all $q\ge 1$. In particular, these weak convergences allow us to obtain that in \eqref{eq:weak_passing_limit} the first term on the l.h.s., and  both terms on the r.h.s. pass to the limit.

\smallskip

Second, Lemma \ref{lem:m_limit} ensures the uniform boundedness of $(\rho^{1,m}+\rho^{2,m})^m$ in $L^r([0,T]; W^{1,r}(\Om))$ for some $r\in(1,2)$ (where $r$ can be chosen arbitrarily close to 2 for $m$ large enough) hence there exists $p^\infty\in L^r([0,T]; W^{1,r}(\Om))$ such that up to passing to a subsequence in $m$, $(\rho^{1,m} + \rho^{2,m})^{m}\weakly p^\infty$ weakly in $L^r([0,T]; W^{1,r}(\Om))$. In particular, one  has also that $\partial_x(\rho^{1,m} + \rho^{2,m})^{m}\weakly \partial_x p^\infty$ weakly in $L^r([0,T]\times\Om)$.

\smallskip

Notice that the convergence  $\E^{i,m}\weakly\partial_x p^\infty\rho^{i,\infty}+\partial_x\Phi_i\rho^{i,\infty}$ weakly in $L^r([0,T]\times \Om)$ is much more delicate, since both terms in the product $-\frac{m}{m-1}\partial_x(\rho^{1,m}+\rho^{2,m})^{m-1}\rho^{i,m}$ (in the definition of $\E^{i,m})$ converge only weakly.  

\smallskip

We shall provide the convergence $\E^{i,m}\weakly\partial_x p^\infty\rho^{2,\infty}+\partial_x\Phi_i\rho^{i,\infty}$ only for $i=2$, the other case is analogous. Observe that by the uniform estimation (in $m$) on $\E^{2,m}$ in $L^r([0,T]\times\Om)$ (for some $1<r<2$) (see \eqref{estim:Lr_unif_E}), there exists $\E^{2,\infty}\in L^r([0,T]\times\Om)$ such that up to passing to a subsequence, $\E^{2,m}\weakly \E^{2,\infty}$ weakly in $L^r([0,T]\times\Om)$ as $m\to+\infty$. Now let us identify the limit $\E^{2,\infty}.$ Let us fix a subsequence (that for simplicity of notation we denote by $m$), such that $(\rho^{2,m})_{m>1}$ converges weakly to $\rho^{2,\infty}$ and $(\E^{2,m})_{m>1}$ converges weakly to $\E^{2,\infty}$ as $m\to+\infty$ in the previously described spaces. 

Let us fix $0\le s<t\le T$. For all $\t\in[s,t]$, we define $I^{2,m}(\t)$ be the ``right-most point'' of the support of $\rho^{2,m}_\t$, i.e. 
$$I^{2,m}(\t):=\sup\left\{x: x\in \Leb\big(\{\rho^{2,m}_\t>0\}\big) \right\}$$
and let us define the set
$$
\mathcal{I} = \left\{(\t,x)\in[s,t]\times\Om: \exists\ (\t_n,x_n)_{n\ge 0}, \hbox{ s.t. } x_n= I^{2,m_n}(\t_n), \hbox{ and } (\t_n,x_n)\to(\t,x) \hbox{ as } n\to\infty\right\},
$$
where $(m_n)_{n\ge 0}$ is a subsequence of the previously chosen subsequence. Then $\mathcal{I}$ is a closed subset of $[s,t]\times\Om$.
Let $\mathcal{I}(\tau):= \mathcal{I} \cap\left(\{\tau\}\times\Om\right)$. Note that in particular $\mathcal{I}(\t)$ is the collection of all subsequential limits of $I^{2,m}(\t).$ 

Observe that if for some $\t\in(s,t)$, $y\in\Om$ lies to the left of $\mathcal{I(\t)}$, i.e. if $y<x$ for any $x\in\mathcal{I}(\tau)$, then $(\t,y)$ lies in the complement of $\{\rho^{1,m}>0\}$ for sufficiently large $m$, and thus defining 
$$\mathcal{J}^-:= \{(\t,y)\in(s,t)\times\Om: y<x \hbox{ for any } x\in \mathcal{I}(\tau)\},$$
one has that when restricted to $\cJ^-$, 
$$\E^{2,m}=-\partial_x (\rho^{2,m})^m-\partial_x\Phi\rho^{2,m},$$ 
in the sense of distributions for sufficiently large $m$. Similarly, defining 
$$\mathcal{J}^+:= \{(\t,y)\in(s,t)\times\Om: y>x \hbox{ for any } x\in \mathcal{I}(\tau)\},$$ 
one has that $\rho^{2,m}=0$ a.e. on $\cJ^+,$ hence when restricted to $\cJ^+$, $\E^{2,m}=0$ a.e. for sufficiently large $m.$ Clearly, $\cJ^-$, $\mathcal{I},\cJ^+$ are Lebesgue measurable and one can write $(s,t)\times\Om=\cJ^-\cup\mathcal{I}\cup\cJ^+$, $\sL^1\mres[0,T]\otimes\sL^{1}\mres\Om-$a.e. Thus, we write furthermore
\begin{align*}
\int_s^t\int_\Om\E^{2,m}\cdot\partial_x\phi\dd x\dd \t&=\int_{\cJ^-}\E^{2,m}\cdot\partial_x\phi\dd \t\otimes\dd x+\int_{\mathcal{I}}\E^{2,m}\cdot\partial_x\phi\dd \t\otimes\dd x+\int_{\cJ^+}\E^{2,m}\cdot\partial_x\phi\dd \t\otimes \dd x\\
&=\int_{\cJ^-}\E^{2,m}\cdot\partial_x\phi\dd \t\otimes\dd x+\int_{\mathcal{I}}\E^{2,m}\cdot\partial_x\phi\dd \t\otimes\dd x.
\end{align*}
Moreover, the very same decomposition remains valid for the weak limit $\E^{2,\infty}$ as well.

\smallskip

{\it Claim 1. $\int_{\mathcal{I}}\E^{2,\infty}\cdot\partial_x\phi\dd \t\otimes\dd x$ can be made arbitrarily small for any  smooth test function $\phi$.} 

If $(\sL^1\otimes\sL^1)(\mathcal{I})=0$, then this is obvious. So one can suppose that this set has positive measure. To show the claim, let us define the width of $\mathcal{I}(\tau)$, i.e.
$$
W(\tau):= \max\{|x-y|: x,y\in\mathcal{I}(\tau)\}=x^2_\t-x^1_\t,
$$
where $x^2_\t:=\max\{x: x\in\mathcal{I}(\t)\}$ and $x^1_\t:=\min\{x: x\in\mathcal{I}(\t)\}$ and these are well-defined since $\mathcal{I}(\t)$ is compact. Let $T_n:= \left\{\t\in(s,t): W(\t) \geq \frac{2}{n}\right\}.$
 Then $\mathcal{I} \subset A_n \cup B_n$, where 
$$
A_n:=\bigcup_{\t\in T_n} \left(\{\t\}\times (x^1_\t+1/n,x^2_\t) \right)\;\; \mbox{and}\;\; B_n = \mathcal{I} \setminus A_n.
$$
Note that $A_n$ and $B_n$ are Lebesgue measurable and the measure of $B_n$ in $[0,T]\times\Om$ is at most $2T/n$, which goes to zero as $n\to\infty$. 

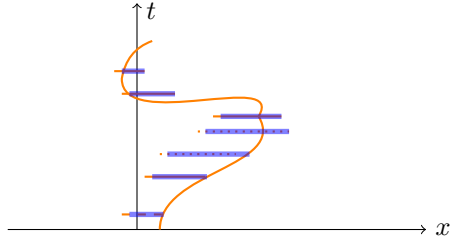
\begin{figure}[h]
\begin{tikzpicture}
\draw [->] (9.2,-1) -- (9.2,2);
\draw [->] (7.5,-1) -- (13,-1);
\node [right] at (9.2,1.9) {$t$};
\node [right] at (13,-1) {$x$};

\draw [thick, orange] (9.5,-1) to [out=90, in=300] (10.8,0.5) to [out=60, in=270] (9, 1) to [out=80, in= 200] (9.4, 1.5);
\draw[thick, orange, dashed] (9,-0.8) to (9.6,-0.8);
\draw[line width=2pt, blue, opacity=0.5] (9.1,-0.8) -- (9.55,-0.8);

\draw[thick, orange] (9.3,-0.3) to (10.1,-0.3);
\draw[line width=2pt, blue, opacity=0.5] (9.4,-0.3) -- (10.12,-0.3);

\draw[thick, orange, dotted] (9.5,0) to (10.5,0);
\draw[line width=2pt, blue, opacity=0.5] (9.6,0) -- (10.68,0);

\draw[thick, orange, dotted] (10,0.3) to (11.2,0.3);
\draw[line width=2pt, blue, opacity=0.5] (10.1,0.3) -- (11.2,0.3);

\draw[thick, orange] (10.2,0.5) to (11.1,0.5);
\draw[line width=2pt, blue, opacity=0.5] (10.3,0.5) -- (11.1,0.5);

\draw[thick, orange] (9,0.8) to (9.7,0.8);
\draw[line width=2pt, blue, opacity=0.5] (9.1,0.8) -- (9.7,0.8);

\draw[thick, orange] (8.9,1.1) to (9.3,1.1);
\draw[line width=2pt, blue, opacity=0.5] (9,1.1) -- (9.3,1.1);

\end{tikzpicture}
\caption{The sets {\color{orange}$\mathcal{I}$} (with orange) and {\color{blue}$A_n$}} (with blue) \label{fig:I_An}
\end{figure}

We know that $W_2(\rho^{i,m}_\t,\rho^{2,\infty}_\t)\to 0$ as $m\to+\infty$, uniformly in $\t$. This implies in particular that the sequence $(\rho^{2,m}_\t)_{m}$ are Cauchy  w.r.t. $W_2$ uniformly in $\t$, which means that for any $n\in\mathbb{N},$ there exists $N(n)>0$ such that for all $\t\in[0,T]$
\be\label{eq:Cauchy}
W_2^2(\rho^{2,m_k}_\t,\rho^{2,m_l}_\t)\le\frac{1}{n^3}, \ \forall k,l>N(n),
\ee
where $m_k$ and $m_l$ denote elements of the sequence denoted by $m$. Now, let us define $A_n(\t):=A_n\cap \{\t\}\times\Om.$

Let us show that for all $\t\in(s,t)$ on $A_n(\t)$ all elements of the sequence $(\rho^{2,m}_\t)_m$ have small mass. Indeed, on the one hand, for any point $x\in A_n(\t)\cap\mathcal{I}(\t)$ (if the intersection is $\emptyset$, then there is nothing to show), there is a subsequence $\tilde m$ of $m$ such that $I^{2,\tilde m}(\t)\to x$ as $\tilde m\to+\infty.$ On the other hand, since $x^1_\t\in \mathcal{I}(\t),$ there exists another subsequence $\ov m$ of $m$ such that $I^{2,\ov m}(\t)\to x^1_\t$ as $\ov m\to+\infty.$ The inequality \eqref{eq:Cauchy} implies that
\begin{align*}
\frac{1}{n^2}\int_{A_n(\t)}\rho^{2,\tilde m}_t\dd x \le W_2^2(\rho^{2,\tilde m}_\t,\rho^{2,\ov m}_\t) \le\frac{1}{n^3} \end{align*}
for $n$ large enough and $\tilde m$ and $\ov m$ larger than $N(n)$. The first inequality holds because of the fact that $x-x^1_\t\ge 1/n$. Thus for all $\t\in(s,t)$
$$
\int_{A_n(\t)}\rho^{2,m}_\t \dd x \leq 1/n,\ \ \hbox{ for any } m \mbox{ large enough.}
$$

Considering any smooth test function $\phi$ supported in $A_n\cup\cJ^+$, the weak formulation  \eqref{eq:weak_passing_limit} together with the fact that $\E^{2,m}$ and $\rho^{2,m}$ vanish on $\cJ^+$ yield
\begin{align*}
\left|\int_{A_n} \E^{2,m}\cdot \partial_x\phi\dd t\otimes\dd x\right|& \leq  \sup_{A_n} |\partial_t\phi|\frac{T}{n}+\sup_{A_n}|\phi|\frac{2}{n},
\end{align*}
for sufficiently large $m$. 
This together with the fact that the measure of $B_n$ is at most $2T/n$ and $\mathcal{I}\subset A_n\cup B_n$ implies that  $\int_{\mathcal{I}} E^{2,m}\cdot\partial_x\phi\dd t\otimes\dd x$ is arbitrary small provided $m$ is large enough, which implies in particular that $\int_{\mathcal{I}} E^{2,\infty}\cdot\partial_x\phi\dd t\otimes\dd x$ is as small as we would like.

\smallskip 

The above claim shows that one needs to describe the weak limit $\E^{2,\infty}$ only on the set $\cJ^-.$ When restricted to $\cJ^-$, one can write 
$$\E^{2,m}=-\partial_x (\rho^{2,m})^m-\partial_x\Phi_2\rho^{2,m}=:-\partial_x p_m-\partial_x\Phi_2\rho^{2,m},$$
and the second term on the r.h.s. of the previous formula passes to the limit due to the weak convergence of $(\rho^{2,m})_{m>1}$. Let us consider a smooth test function $\phi$ compactly supported in $\cJ^-$. Then one has
\begin{align*}
\lim_{m\to+\infty}-\int_{\cJ^-}\partial_x p_m\cdot\partial_x\phi\dd \t\otimes\dd x&=\lim_{m\to+\infty}\int_{\cJ^-} p_m\partial^2_{xx}\phi\dd \t\otimes\dd x=\lim_{m\to+\infty}\int_{\cJ^-} (\rho^{2,m})^m\partial^2_{xx}\phi\dd \t\otimes\dd x\\
&=\lim_{m\to+\infty}\int_s^t\int_\Om(\rho^{1,m}+\rho^{2,m})^m\partial^2_{xx}\phi\dd x\dd \t=\int_s^t\int_\Om p^{\infty}\partial^2_{xx}\phi\dd x\dd \t\\
&=-\int_s^t\int_\Om \partial_x p^{\infty}\partial_{x}\phi\dd x\dd \t=-\int_{\cJ^-}\partial_x p^\infty\cdot\partial_x\phi\dd \t\otimes\dd x
\end{align*}
Hence when restricted to $\cJ^-,$ $\E^{2,\infty}= -\partial_x p^\infty-\partial_x\Phi_2\rho^{2,\infty}.$  
Note that $\rho^{2,\infty}=0$ as well in $\mathcal{I}\cup \cJ^+$, so we can write
$$
\E^{2,\infty} = -\partial_x p^{\infty}\chi_{\{\rho^{2,\infty}>0\}} -\partial_x\Phi_2\rho^{2,\infty}.
$$
Below we will show that $p^{\infty}$ vanishes in $\rho^{2,\infty}<1$. This allows us to write 
$$
\E^{2,\infty} = -\partial_x p^{\infty}\rho^{2,\infty} -\partial_x\Phi_2\rho^{2,\infty}.
$$
Similar reasoning yields the concrete form of $\E^{1,\infty}$ as well. 

\medskip

Let us show that $\|\rho^{i,\infty}\|_{L^\infty}\le 1$. By Lemma \ref{lem:m_limit} we know that  for a.e. $t\in[0,T]$,  $\ds\int_{\Om} (\rho^{i,m}_t)^m\dd x \leq C$, where the constant $C$ is independent of $m$. Thus for any $\d>0$, on the set where $\rho^{i,m}_t\ge 1+\d$ a.e. we have by Chebyshev's inequality that
\be\label{eq:chebyshev}
(1+\d)^m \sL^1(\{\rho^{i,m}_t \ge 1+\d\})\le \int_{\{\rho^{i,m}_t \ge 1+\d\}} (\rho^{i,m}_t)^m\dd x\le \int_{\Om} (\rho^{i,m}_t)^m\dd x \leq C.
\ee
This implies 
$$
\sL^1(\{\rho^{i,m}_t \ge 1+\d\}) \leq C/(1+\d)^m \to 0 \hbox{ as } m\to\infty,
$$
and thus by the arbitrariness of $\d>0$ we conclude $\rho^{i,\infty} \leq 1$ a.e in $[0,T]\times\Om$.  

\medskip

At last, it remains to show that $p^\infty$ is supported in the region $\{\rho^{1,\infty}+\rho^{2,\infty}=1\}$. Notice that since Theorem \ref{thm:sep_limit} yields that $\sL^1(\{\rho^{1,\infty}_t>0\}\cap\{\rho^{2,\infty}_t>0\})=0$ for all $t\in[0,T]$, it is enough to show that $p^\infty(1-\rho^{i,\infty})=0$ a.e. in $\{\rho^{i,\infty}>0\} $ $i=1,2$. We show this property only in the case of $i=2$, the other case is analogous. Let us use the notations 
\begin{align*}
& p_m:=(\rho^{1,m}+\rho^{2,m})^m=_{\ae}(\rho^{2,m})^m,\ \ \ \mbox{in}\ \{\rho^{2,\infty}>0\}\subseteq_{\ae}\cJ^-\\
& \tilde p_m:=(\rho^{1,m}+\rho^{2,m})^{m-1/2}=_{\ae}(\rho^{2,m})^{m-1/2}=(p_m)^{1-\frac{1}{2m}},\ \ \ \mbox{in}\ \{\rho^{2,\infty}>0\}\subseteq_{\ae}\cJ^-
\end{align*}

{\it Claim 2. When $m\to+\infty$, $\ds\int_0^T\int_\Om p_m(1-\rho^{2,m})\dd x\dd t$ and $\ds\int_0^T\int_\Om \tilde p_m(1-\rho^{2,m})\dd x\dd t$ are arbitrary small.}

Let $\d>0$ be small. To show the claim, first observe that $[0,T]\ni t\mapsto \int_\Om\chi_{\{\rho^{2,m}_t\ge 1+\d\}}\dd x$ is a measurable function and by \eqref{eq:chebyshev}, $[0,T]\ni t\mapsto \int_\Om\chi_{\{\rho^{2,m}_t\ge 1+\d\}}\dd x$ is integrable. Similar properties are valid for other characteristic functions of the (sub)level sets. Thus, 
\begin{align*}
\left|\int_0^T\int_\Om p_m(1-\rho^{2,m})\dd x\dd t\right|&\le \int_0^T\int_{\{\rho^{2,m}_t<1-\d\}} p_m(1-\rho^{2,m})\dd x\dd t+\int_0^T\int_{\{1-\d\le \rho^{2,m}_t \le 1+\d\}} p_m|1-\rho^{2,m}|\dd x\dd t\\
&+\int_0^T\int_{\{\rho^{2,m}_t>1+\d\}} p_m(\rho^{2,m}-1)\dd x\dd t\\
&\le (1-\d)^m T\sL^1(\Om)+\d C\\
&+\|p_m\|_{L^{q_1}([0,T]\times\Om)}\|\rho^{2,m}\|_{L^{q_2}([0,T]\times\Om)}\left(\int_0^T\int_{\{\rho^{2,m}_t>1+\d\}}1\dd x\dd t\right)^{1/q_3}\\
& \le (1-\d)^m T\sL^1(\Om)+\d C+\left(\frac{TC}{(1+\d)^m}\right)^{1/q_3},
\end{align*}
where $\frac{1}{q_1}+\frac{1}{q_2}+\frac{1}{q_3}=1$, $q_1<r,\ q_2\le m$ and $C$ is independent of $m$. Here we used also the uniform estimations from Lemma \ref{lem:m_limit}(1) and Theorem \ref{thm:existence_PME}. Now the last sum is as small as desired by choosing $\d>0$ small and $m$ large enough, which shows the first part of the claim. A similar reasoning and Lemma \ref{lem:m_limit}(2) yield the second part of the claim.

Last, one can use the same arguments as in \cite[Lemma 3.4, Step 3. in the proof of Theorem 2.1.]{MauRouSan1} to conclude that $p^\infty(1-\rho^{2,\infty})=0$ a.e. in $\cJ^-$. Indeed, we know that $\tilde p_m$ is uniformly bounded in $L^2([0,T];H^1(\Om))$ and by Lemma \ref{lem:m_limit}(2) $\rho^{2,m}_t$ is uniformly bounded for a.e. $t\in[0,T]$. These together with {\it Claim 2} imply in particular that mentioned results from \cite{MauRouSan1} can be applied to obtain that 
$$\tilde p^\infty(1-\rho^{2,\infty})=0,\ \ \mbox{a.e. in } \cJ^-,$$
where $\tilde p^\infty$ is the weak limit of $(\tilde p_m)_{m>1}$ in $L^2([0,T];H^1(\Om))$ as $m\to+\infty$.
It remains to show only that $p^\infty=\tilde p^\infty$ a.e. in $\cJ^-.$ This is straight forward. Indeed, notice  that $\tilde p_m=(p_m)^{1-\frac{1}{2m}}$ a.e. in $\cJ^-$. Lemma \ref{lem:m_limit} implies that for a.e. $t\in[0,T]$ both $p_m(t,\cdot)$ and $\tilde p_m(t,\cdot)$ are (uniformly) H\"older continuous and converge uniformly (up to passing to a subsequence). This means that $p^\infty_t$ and $\tilde p^\infty_t$ are the uniform limits for a.e. $t\in[0,T]$ and $p^\infty_t=\tilde p^\infty_t$ for a.e. $t\in[0,T]$. The result follows.

\end{proof}

\medskip

\subsection{Characterization of the pressure in the case of $m=+\infty$}

Here we establish the optimal regularity of the pressure, which is Lipschitz continuity for a.e. time. The pressure can be discontinuous in time even in the single density case, when two components of the congested density zone merge into one (see the discussion in \cite{Kim} for instance.)

\begin{proposition}
Let $(\rho^{1,\infty},\rho^{2,\infty},p^\infty)$ be a solution of the system \eqref{eq:PME_infty} in the setting of \eqref{fmw:1D_ordered}. Then $p^{\infty}(t,\cdot)$ is uniformly Lipschitz continuous for a.e. $t\in[0,T]$. Moreover, $p^{\infty}(t,\cdot)$ is as smooth as $\Phi_i$ in the interior of the sets $\{\rho^{i,\infty}=1\}$, $i=1,2$ for a.e. $t\in[0,T]$.
\end{proposition}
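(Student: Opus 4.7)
The plan is to exploit the one-dimensional ODE structure that $p^\infty_t$ inherits on the saturated sets $K^i_t:=\{\rho^{i,\infty}_t=1\}$, together with the free-boundary and no-flux conditions imposed by the complementarity constraint $p^\infty(1-\rho^{i,\infty})=0$ and the segregation/ordering properties already established.

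Fix any $t\in[0,T]$ at which $p^\infty_t\in H^1(\Om)$; in one dimension this forces $p^\infty_t$ to be continuous. By Theorem~\ref{thm:existence_PME_inf} one has $p^\infty_t\ge 0$ and $\supp(p^\infty_t)\subseteq K^1_t\cup K^2_t$, while Theorem~\ref{thm:sep_limit} combined with \eqref{fmw:1D_ordered} gives that $K^2_t$ lies entirely to the left of $K^1_t$ with disjoint interiors. On the space-time interior of $\{\rho^{i,\infty}=1\}$, plugging $\partial_t\rho^{i,\infty}=0$ into the weak formulation $\partial_t\rho^{i,\infty}=\partial_x((\partial_x p^\infty+\partial_x\Phi_i)\rho^{i,\infty})$ yields the distributional identity $\partial^2_{xx}p^\infty=-\partial^2_{xx}\Phi_i$. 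Equivalently, $q^i_t:=p^\infty_t+\Phi_i$ is affine on each connected component of $\mathrm{int}(K^i_t)$. This already produces the regularity claim: on the interior of $\{\rho^{i,\infty}_t=1\}$, $p^\infty_t$ differs from $-\Phi_i$ by an affine function and hence inherits its smoothness.

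For the Lipschitz bound I decompose the open set $\{p^\infty_t>0\}$ into connected components $J$ and handle them in two cases. If $J\subset\mathrm{int}(K^i_t)$ for a single $i$, the slope $\alpha$ of the affine function $q^i_t$ is pinned down by the two endpoint conditions: either $p^\infty_t=0$ at a free boundary (giving $\alpha=(\Phi_i(b)-\Phi_i(a))/(b-a)$, bounded by $\|\partial_x\Phi_i\|_{L^\infty}$ via the mean value theorem), or the no-flux condition at $\partial\Om$ forces the velocity $-\partial_x p^\infty-\partial_x\Phi_i$ to vanish and hence $\alpha=0$. In either subcase $|\partial_x p^\infty_t|=|\alpha-\partial_x\Phi_i|\le 2\|\partial_x\Phi_i\|_{L^\infty}$. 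If instead $J$ straddles an interface point $c$ with $J\cap K^2_t$ on the left and $J\cap K^1_t$ on the right, then $q^i_t$ is affine on each piece with slope $\alpha^i$, and on each piece the convective velocity equals $-\alpha^i$. A Rankine--Hugoniot type identity at the moving interface (obtained by testing the weak formulations of the two continuity equations against cutoff functions localized near $c$) forces $\alpha^1=\alpha^2=:\alpha$; combined with $p^\infty_t(a)=p^\infty_t(b)=0$ and continuity of $p^\infty_t$ at $c$ this gives
\[
\alpha\,(b-a)=\Phi_2(c)-\Phi_2(a)+\Phi_1(b)-\Phi_1(c),
\]
which is a convex combination of difference quotients of $\Phi_1,\Phi_2$, so $|\alpha|\le \max_i\|\partial_x\Phi_i\|_{L^\infty}$. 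Collecting the estimates from both cases yields $\|\partial_x p^\infty_t\|_{L^\infty(\Om)}\le C\max_i\|\partial_x\Phi_i\|_{L^\infty}$ with a constant uniform in $t$.

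The hardest step is the rigorous justification of the matching $\alpha^1=\alpha^2$ at a two-species interface, since a priori one only knows $p^\infty\in L^2([0,T];H^1(\Om))$ and the interface curve $t\mapsto c(t)$ need not be smooth. Two viable routes are: (i) prove the analogous bound first at the JKO level, by exploiting the optimality conditions in Lemma~\ref{lem:opt_cond_infty} (the Kantorovich potentials of the two patches must agree up to an additive constant across the free interface, which is the discrete counterpart of $\alpha^1=\alpha^2$), and then pass to the limit $\t\da 0$; or (ii) regularize the solution in time, derive the Rankine--Hugoniot jump relation on the regularized interface, and pass to the limit in the weak formulation. Once the matching is in hand, the rest of the argument reduces to the elementary ODE computations sketched above.
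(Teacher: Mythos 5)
Your treatment of the smoothness claim and of the single-species case of the Lipschitz bound is correct and matches the paper: test the weak equation in the interior of $\{\rho^{i,\infty}=1\}$ to get $-\partial^2_{xx}p^\infty = \partial^2_{xx}\Phi_i$, so $q^i_t := p^\infty_t+\Phi_i$ is affine there, and the endpoint/no-flux conditions pin the slope $\alpha$ to within $\|\partial_x\Phi_i\|_{L^\infty}$. The regularity statement follows exactly as you write.

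The genuine gap is in your Case~2, and you correctly identify it yourself: you need the flux matching $\alpha^1=\alpha^2$ at a component of $\{p^\infty_t>0\}$ that straddles the $\rho^{2,\infty}$--$\rho^{1,\infty}$ interface, and neither of your two proposed routes (JKO-level argument, or time regularization plus Rankine--Hugoniot) is carried out, nor is it clear either would be short. The paper avoids this difficulty entirely by testing the \emph{sum} of the two continuity equations against test functions supported in the space-time interior of $\{\rho^{1,\infty}+\rho^{2,\infty}=1\}$. Since $\partial_t(\rho^{1,\infty}+\rho^{2,\infty})=0$ there, one obtains the single distributional identity
\[
-\partial^2_{xx}p^\infty(t,\cdot)=\partial_x\bigl(\partial_x\Phi_1\,\rho^{1,\infty}+\partial_x\Phi_2\,\rho^{2,\infty}\bigr)
\quad\text{in }\mathrm{int}\{\rho^{1,\infty}_t+\rho^{2,\infty}_t=1\},
\]
i.e.\ $\partial_x\bigl(\partial_x p^\infty+\partial_x\Phi_1\rho^{1,\infty}+\partial_x\Phi_2\rho^{2,\infty}\bigr)=0$. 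The bracketed quantity is the total flux; on the $\rho^{2,\infty}=1$ piece it equals your $\alpha^2$ and on the $\rho^{1,\infty}=1$ piece your $\alpha^1$, so it being locally constant \emph{is} the matching $\alpha^1=\alpha^2$, obtained distributionally across the interface with no smoothness assumption on $t\mapsto c(t)$ and no Rankine--Hugoniot analysis. With that single equation in hand, your elementary ODE computations close the argument; I would recommend replacing the Rankine--Hugoniot paragraph with this observation.
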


Let us remark first that $p^{\infty}$ may have positive boundary data on $\partial\{\rho^{1,\infty}=1\}\cap \partial\{\rho^{2,\infty}=1\}$. Also, notice that the set $\{\rho^{i,\infty}=1\}$ may have empty interior.

\begin{proof}
%
%
%
%
%

Considering the sum of the two weak equations tested against smooth test functions supported in the interior of $\{\rho^{1,\infty}_t+\rho^{2,\infty}_t=1\}$ one obtains that

\begin{equation}\label{semi_convexity}
-\partial^2_{xx} p^{\infty}(t,\cdot) = \partial_x(\partial_x\Phi_1\rho^{1,\infty}+ \partial_x\Phi_2\rho^{2,\infty}),  \hbox{ in the interior of }\{\rho^{1,\infty}_t+\rho^{2,\infty}_t=1\},
\end{equation}
for a.e. $t\in[0,T]$ with homogeneous Dirichlet boundary data (this is because of the fact that $p^{\infty}(t,\cdot)$ is H\"{o}lder continuous a.e. $t\in[0,T]$). Since $\sL^1\left(\{\rho^{1,\infty}_t>0\}\cap\{\rho^{2,\infty}_t>0\}\right)=0$ and the sets $\{\rho^{1,\infty}_t>0\}$ and $\{\rho^{2,\infty}_t>0\}$ are ordered in the sense of  \eqref{fmw:1D_ordered}, one gets

\begin{equation}\label{1}
-\partial^2_{xx} p^{\infty}(t,\cdot) = \partial_{xx}^2\Phi_i, \ \hbox{  in the  interior of } \{\rho^{i,\infty}_t=1\}.
\end{equation}
Therefore $p^\infty(t,\cdot)$ is as smooth as $\Phi_i$ in the interior of the sets  $\{\rho^{i,\infty}_t=1\}$, $i=1,2$, for a.e. $t\in[0,T]$. Let us remark also that since  $p^\infty(t,\cdot)$ is H\"older continuous, the set $\{p^\infty(t,\cdot)>0\}\subseteq\{\rho^{1,\infty}_t+\rho^{2,\infty}_t=1\}$ is open. Thus $p^\infty(t,\cdot)$ is as smooth as $\Phi_i$ in $\{p^\infty(t,\cdot)>0\}\cap {\rm{int}}\{\rho^{i,\infty}_t=1\}.$

\smallskip

Let us show that $p^{\infty}(t,\cdot)$ is Lipschitz continuous for a.e. $t\in[0,T]$. Notice that by the previous arguments, $p^\infty(t,\cdot)$ fails to be differentiable in at most countably many points of $\Om$, and except these points it is smooth. By the fact that $p^\infty(t,\cdot)$ is a Sobolev function, we know that it is absolutely continuous for a.e $t\in[0,T]$. Moreover 
$$|\partial_x p^\infty(t,\cdot)|\le C,\ \ \ae\ \iin\ \Om,$$
where $C>0$ is a positive constant that depends only on $\max\{\|\partial_x\Phi_i\|_{L^\infty}:i=1,2\}.$
This together with the absolute continuity imply that $p^\infty(t,\cdot)$ Lipschitz continuous for a.e. $t\in[0,T].$


\end{proof}

\subsection{Patch solutions}

\begin{proposition}\label{prop:patch}
Let $(\rho^{1,\infty},\rho^{2,\infty},p^\infty)$ be a solution of the system \eqref{eq:PME_infty} in the setting of \eqref{fmw:1D_ordered}. Let us suppose that $\rho^{1,\infty}_0$ and $\rho^{2,\infty}_0$ are \emph{patches}, i.e. $\rho^{i,\infty}_0=\chi_{A^i_0}$ for open intervals $A^i_0$ in $\Om$, $i=1,2$. We suppose moreover that the drifts $-\partial_x\Phi_i$, $i=1,2$ are compressive, meaning that $\partial^2_{xx}\Phi_i \ge 0$ on $\Om$. Then $\rho^{i,\infty}_t$, $i=1,2$ is a patch for all $t\in[0,T]$, i.e. there exists $\{A^i_0(t)\}_{t\in[0,T]}$: a family of open intervals such that $\rho^{i,\infty}_t=\chi_{A^i_0(t)}.$
\end{proposition}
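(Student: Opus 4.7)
The plan is to establish the patch property at the level of the JKO minimizers and then pass to the continuum limit. Since Proposition~\ref{prop:separation_1D} and Theorem~\ref{thm:sep_limit} already guarantee that the two densities stay ordered and segregated throughout the evolution, each $\rho^i$ has conserved mass $M_i$ and admits a well-defined quantile $X^i\colon[0,M_i]\to\ov\Om$, monotone non-decreasing and characterized by $X^i_{\#}(\sL^1\mres[0,M_i])=\rho^i$. The constraint $\rho^i\le 1$ translates into $\partial_s X^i\ge 1$ a.e., and the patch condition $\rho^i=\chi_{(a^i,a^i+M_i)}$ is equivalent to $X^i(s)=a^i+s$. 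I shall argue by induction on $k$ that the discrete-time solution satisfies $\rho^{i,\t}_k=\chi_{(a^i_k,a^i_k+M_i)}$ for every $k\in\{0,\dots,N\}$ and every $\t>0$.

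For the inductive step, assume $X^i_k(s)=a^i_k+s$ and let $(\rho^1,\rho^2):=(\rho^{1,\t}_{k+1},\rho^{2,\t}_{k+1})$ denote the next JKO minimizer. Using the standard identification of the $1$D Wasserstein distance with the $L^2$ distance between quantiles (see e.g.~\cite{OTAM}), the problem \eqref{gf:tau} reduces to minimizing
\[
\sum_{i=1}^{2}\int_0^{M_i}\Big[\Phi_i(X^i(s))+\frac{(X^i(s)-a^i_k-s)^2}{2\t}\Big]\dd s,
\]
over monotone absolutely continuous $X^i\colon[0,M_i]\to\ov\Om$ subject to $\partial_s X^i\ge 1$ and the no-overlap constraint $X^2(M_2)\le X^1(0)$. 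The key step is a KKT-type argument: if $\partial_s X^i>1$ on an open set $U\subset(0,M_i)$, then variations $\eta\in C_c^\infty(U)$ are admissible, and the first variation yields the pointwise Euler--Lagrange equation $\Phi_i'(X^i(s))+(X^i(s)-a^i_k-s)/\t=0$ a.e.\ on $U$. Differentiating once more in $s$ and using $\partial^2_{xx}\Phi_i\ge 0$ gives
\[
\partial_s X^i(s)=\frac{1}{1+\t\,\Phi_i''(X^i(s))}\le 1\quad\text{on }U,
\]
contradicting $\partial_s X^i>1$ on $U$. Hence $\partial_s X^i=1$ a.e., which gives $X^i(s)=a^i_{k+1}+s$, i.e.\ $\rho^{i,\t}_{k+1}=\chi_{(a^i_{k+1},a^i_{k+1}+M_i)}$; the ordering $a^2_{k+1}+M_2\le a^1_{k+1}$ then follows from Proposition~\ref{prop:separation_1D}.

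To pass to the continuum limit, I would use the explicit formula $W_2^2(\chi_{(a,a+M)},\chi_{(b,b+M)})=M(a-b)^2$ together with the uniform $AC^2$ bound from Lemma~\ref{lem:estimates}: the function $t\mapsto a^i(\t,t)$ defined from the interpolation between $a^i_k$ and $a^i_{k+1}$ is then uniformly $1/2$-H\"older in $t$ with a constant independent of $\t$. By the Arzel\`a--Ascoli theorem, along a subsequence $a^i(\t_n,\cdot)\to a^i(\cdot)$ uniformly on $[0,T]$ for some continuous $a^i$, and comparison with the $W_2$-uniform convergence established in Proposition~\ref{prop:conv_unif-measure} identifies $\rho^{i,\infty}_t=\chi_{(a^i(t),a^i(t)+M_i)}$ for every $t\in[0,T]$.

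The main obstacle I foresee is the handling of the coupling constraint $X^2(M_2)\le X^1(0)$ when it becomes active (i.e.\ the two patches meet). In that case the KKT analysis acquires an extra boundary multiplier at the common endpoint, corresponding to a nontrivial contact pressure through the interface; verifying that this only modifies the relation between $a^1_{k+1}$ and $a^2_{k+1}$ (enforcing $a^2_{k+1}+M_2=a^1_{k+1}$) but leaves the pointwise identity $\partial_s X^i=1$ in the interior of each subinterval untouched is the delicate point. Once justified, the inductive patch-preservation and the subsequent limit procedure go through as described.
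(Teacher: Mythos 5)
Your strategy is genuinely different from the paper's. The paper works entirely at the continuum level: it constructs a $C^2$ extension $\tilde p$ of $p^\infty$ to one side of the interface, solves a transport equation with the mollified velocity $v^\e=(\partial_x\tilde p+\partial_x\Phi_i)\star\eta_\e$, uses the compressiveness of the drift to deduce $\partial_x v^\e\ge 0$, and combines this with the weak formulation and mass conservation to show that the transported set captures essentially all the mass while its Lebesgue measure can only decrease. You instead argue at the JKO level through the $1$D quantile map, showing patch-preservation step by step and then passing to the limit. Both routes exploit the same mechanism (compressive drift forces the quantile slope down to the obstacle $\partial_s X^i\ge 1$), and your formulation via quantiles makes this mechanism particularly transparent; the paper's route avoids the inductive structure and works directly with the continuum limit.

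However, there are two gaps. First, your contradiction argument derives the Euler--Lagrange equation only on an \emph{open} set $U$ where $\partial_s X^i>1$; but the negation of ``$\partial_s X^i=1$ a.e.'' only produces a measurable set of positive measure, which need not contain any open interval, so the admissibility of $C_c^\infty(U)$ variations cannot be invoked directly. This can be repaired without the EL equation at all: writing $Y^i:=X^i-s$ (so $Y^i$ is non-decreasing) and $\bar Y^i:=\frac1{M_i}\int_0^{M_i}Y^i(\sigma)\dd\sigma$, one compares $X^i$ with the admissible competitor $\bar X^i(s):=\bar Y^i+s$. Convexity of the quadratic term, together with the Chebyshev sum inequality applied to the two non-decreasing functions $s\mapsto\Phi_i'(\bar Y^i+s)$ and $s\mapsto Y^i(s)-\bar Y^i$ (here is exactly where $\partial^2_{xx}\Phi_i\ge 0$ enters), gives $J[\bar X^i]\le J[X^i]$ with equality iff $Y^i\equiv\bar Y^i$; uniqueness of the JKO minimizer then forces the patch. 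Second, the active case of the contact constraint $X^2(M_2)\le X^1(0)$, which you flag yourself, does need to be worked out: the extra multiplier there is precisely the discrete pressure at the common interface, and while your claim that it only affects the endpoint relation and not the interior slope is plausible (the competitor argument above localizes to each species separately), it is not verified. Until those two points are settled, the argument does not yet constitute a complete proof; the paper's continuum route, by dealing with the pressure globally through the extension $\tilde p$, avoids both issues.
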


\begin{remark}
While we believe our argument can be extended to measurable sets instead of open intervals, we do not pursue this generalization for simplicity.
\end{remark}

\begin{proof}



\medskip


Let us recall that the sets $\{\rho^{2,\infty}_t>0\}$ and $\{\rho^{1,\infty}_t>0\}$ are ordered for all $t\in[0,T]$ in the sense of \eqref{fmw:1D_ordered}, with $\rho^{2,\infty}$ supported to the left of $\rho^{1,\infty}$. We show the proposition only for $\rho^{2,\infty}$, the case of $\rho^{1,\infty}$ is analogous. 

We define an extension $\tilde{p}$ of $p^\infty$ to the right of the support of $\rho^{2,\infty}$. For a.e. $t$ where $p^\infty$ satisfies \eqref{1}, let $x_0(t):=\sup\Leb \left(\{\rho^{2,\infty}_t>0\}\right)$. If $p^\infty(t,x_0(t))=0$ then we let $\tilde{p}(t,\cdot)=p^\infty(t,\cdot)$. If $p(t,x_0(t))>0$, this means $x_0(t)$ lies in the interior of $\{\rho^{1,\infty}_t+\rho^{2,\infty}_t=1\}$. In this case let us define $\tilde{p}$ to be a $C^2$ extension of $p^\infty$ to the right of $x_0(t)$ such that $-\partial^2_{xx}\tilde{p} = \partial^2_{xx}\Phi_2$. 

\medskip

Let us consider the test function $\phi$, as the solution of the transport equation 
$$\partial_t\phi_t - v^\e \partial_x\phi = 0$$ 
with initial condition $\phi(0,\cdot) = \chi_{A^2_0}$, where we define
$v^\e:= v\star \eta_\e$ with $v=(\partial_x \tilde{p}+ \partial_x\Phi_i)$ and $\eta_\e$ is a standard mollifier (the mollification being performed only w.r.t. the space variable). Then $ \|v^\e(t,\cdot)-v(t,\cdot)\|_{L^q}\le C\e$ in the set $\{\rho^{2,\infty}_t>0\}$ for any $q>0$ and a.e. $t\in[0,T]$, where $C$ depends on the Lipschitz constant of $p^\infty$ and $\Phi_2$.  Then, from the weak expression we have
$$
\left|\int_\Om (\rho^{2,\infty}\phi)(t,\cdot)\dd x -\int_\Om (\rho^{2,\infty}\phi)(0,\cdot)\dd x\right| = \left|\int^t_0\int_\Om(v_\e-v) \rho^{2,\infty}\partial_x\phi \dd x \dd\tau\right | \le C\e.
$$
Since $\phi$ solves a transport equation with spatially smooth velocity which is integrable in time, this equation is well-posed and $\phi$ can be represented using the method of characteristics. In particular $\phi(t,\cdot) = \chi_{A^2_t}$ for some measurable set $A^2_t$ for each time $t>0$.  Hence the l.h.s. of the above equation is
\begin{equation}\label{111}
\int_\Om (\rho^{2,\infty}\phi)(t,\cdot)\dd x -\int_\Om (\rho^{2,\infty}\phi)(0,\cdot)\dd x = \int_{A^2_t} \rho^{2,\infty}_t\dd x -\int_{A^2_0} 1\dd x.
\end{equation}

Now we claim that 
\begin{equation}\label{compressive}
\partial_x v^\e \geq 0.
\end{equation}
 This is true because $\tilde{p}$ satisfies $-\partial^2_{xx} \tilde{p} =\partial^2_{xx}\Phi_2$  (or $\tilde p=p^\infty$) to the right of $x_0(t)$ as well as in any interior point of  $\{\rho^{2,\infty}_t=1\}$, and to the left of $x_0(t)$ we have $\tilde{p}= p^\infty= \max\{p^\infty,0\}$, which makes $p^\infty$ convex at any boundary point of the set $\{\rho^{2,\infty}=1\}$ to the left of $x_0(t)$. Thus we conclude that 
$$
\partial_x v = \partial^2_{xx} p^\infty + \partial^2_{xx}\Phi_2 \geq 0 \hbox{ on } (-\infty,x_0(t))\cap\Om,
$$
in the distributional sense, which yields $\partial_x v^\e \geq 0$.
 
 \medskip
 
 Due to \eqref{compressive} we have $\sL^1(A^2_t) \leq \sL^1(A^2_0)$. This, the fact that $\rho^{2,\infty}_t \leq 1$ a.e., and \eqref{111} yield that 
 $$
\int_{A^2_t} (1-\rho^{2,\infty}_t)\dd x\le  \int_{A^2_t} (1-\rho^{2,\infty}_t)\dd x + \sL^1(A^2_0) -\sL^2(A^2_t) \le C\e.
$$


\medskip

We still need to let $\e\da 0$ to achieve the desired result. If $A^2_0$ is an interval then $A^2_t$ is an interval (that depends on $\e>0$) with uniformly bounded velocity with respect to $\e$, hence along a subsequence the endpoints (as a function of $t$ they are equicontinuous) uniformly converge to limiting endpoints $(a(t), b(t))_{\{t>0\}}$ as $\e\da 0$. Let $A_0^2(t) := (a(t), b(t))$. Here $\rho^{2,\infty}_t$ should be identically one (because of the previous inequality). But this means that along this subsequence, $v$ was incompressible except in a small set in $A^2_t$, so that makes $\sL^1(A^2_t)$ very close to $\sL^1(A^2_0)$ and $|b(t)-a(t)| = \sL^1(A^2_0)$. Since $\rho^{2,\infty}$ preserves mass over time, this means that 
$$
\rho^{2,\infty}_t = \chi_{A_0^2(t)}.
$$
\end{proof}

\medskip

It remains to describe the evolution of the patches $\{\rho^{i,\infty}=1\}$. As we see in \cite{KimPoz} in \cite{MPQ}, the evolution laws are different depending on whether there are regions of the densities with values between zero and one.  In the above Proposition, we have patch solutions supported on an interval, and the  continuity of the densities over time  in $W_2$-distance yields that each patch $\{\rho^{i,\infty}=1\}$ evolves continuously in time. Therefore it follows that the space-time interior of those sets taken at time $t$ equals spatial interior at time $t$. Thus from \eqref{1} we have $p^{\infty}(t,\cdot)\in C^2$ at every time in the interior of $\{\rho^{1,\infty}_t+\rho^{2,\infty}_t=1\}$.

\medskip

\begin{remark}

With the aforementioned regularity of $p^{\infty}$ and $\{\rho^{i,\infty}=1\}$ at hand, one can verify with test functions in the weak formulation that the following holds: the velocity law on one-phase boundary points  is given by
\begin{equation}\label{2}
V=  \nu^i_x (-\partial^i_x p^{\infty}  -\partial_x\Phi_i)\quad {\rm{on}}\ \  \partial\{\rho^{i,\infty}=1\},
\end{equation}
where $\nu_x$ is the outward normal of the set $\{\rho^{i,\infty}=1\}$, $V$ is the normal velocity of the interface and  $\partial^i_x$ denotes the $x$-derivative taken from the interior of the set $\{\rho^{i,\infty}=1\}$.  This yields the flux matching across different densities,
\begin{equation}\label{3}
\partial^1_x p^{\infty}_x +\partial_x\Phi_1 = \partial^2_x p^{\infty}_x +\partial_x\Phi_2 \quad {\rm{on}}\ \ \partial\{\rho^{1,\infty}=1\} \cap \partial\{\rho^{2,\infty}=1\}.
\end{equation}
The equations \eqref{1}, \eqref{2} and \eqref{3} corresponds to a generalized two-phase Hele-Shaw flow evolving by the pressure variable $p^{\infty}$ where different drift potentials are present for each phase $\rho^{i,\infty}$.

\end{remark}

\appendix

\section{Optimal transport toolbox}\label{sec:appendix_ot}

\begin{lemma}\label{cor:opt_cond}
Let $f:[0,+\infty)\to\R$ be a $C^1$ convex function that is superlinear at $+\infty.$ Let $M>0.$ We consider $\cF:\sP^M(\Om)\to\R\cup{+\infty}$ defined as 
$$\cF(\rho)=\left\{
\ba{ll}
\ds\int_\Om f(\rho(x))\dd x, & {\rm{if}}\ \rho\ll\sL^d,\\
+\infty, & {\rm{otherwise}}.
\ea
\right.
$$
Let $\nu\in\sP^{M}(\Om)$ be given. Then there exists a solution $\varrho\in\sP^{\ac,M}(\Om)$ of the minimization problem 
$$\min_{\rho\in\sP^M(\Om)}\left\{\cF(\rho)+\frac12 W_2^2(\rho,\nu)\right\}.$$
If in addition $\nu\ll\sL^d$ or if $f$ is strictly convex, then $\varrho$ is unique.

Moreover, $\exists C\in\R$ such that for a suitable Kantorovich potential $\vphi$ in the optimal transport of $\varrho$ onto $\nu$ one has the following first order necessary optimality condition fulfilled 
\be\label{cond:opt}
\left\{
\ba{ll}
f'(\varrho)+\vphi=C, & \varrho-{\rm{a.e.}},\\
f'(\varrho)+\vphi\ge C, & {\rm{on}}\ \{\varrho=0\}.
\ea
\right.
\ee
If $f'(0)$ is finite, then one can express the above condition as $f'(\varrho)=\max\{C-\varphi, f'(0)\}.$
\end{lemma}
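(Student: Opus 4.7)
The plan is to proceed in three stages: existence via the direct method, uniqueness via strict convexity, and the first-order optimality through a convex-combination perturbation.

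For existence, set $\Psi(\rho) := \cF(\rho) + \tfrac12 W_2^2(\rho, \nu)$. Since $f$ is convex and superlinear at $+\infty$, $\cF$ is weakly-$\star$ l.s.c.\ on $\sP^M(\Om)$ and bounded below; the Wasserstein term is weakly-$\star$ l.s.c.\ as well. Since $\sP^M(\Om)$ is weakly-$\star$ compact ($\Om$ being bounded), a minimizing sequence admits a subsequential limit $\varrho$ which achieves the infimum by l.s.c., and superlinearity of $f$ forces $\varrho \ll \sL^d$. For uniqueness, when $\nu \ll \sL^d$ the functional $\rho \mapsto \tfrac12 W_2^2(\rho, \nu)$ is strictly convex along linear interpolations (a classical OT result), hence so is $\Psi$; if instead $f$ is strictly convex, so is $\cF$, and again $\Psi$ is. Either way the minimizer $\varrho$ is unique.

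For the first-order condition, I would test against $\varrho_\epsilon := (1-\epsilon)\varrho + \epsilon\mu \in \sP^{\ac, M}(\Om)$ for arbitrary competitors $\mu \in \sP^{\ac, M}(\Om)$ with $\cF(\mu) < \infty$. By convexity of $f$, the pointwise difference quotient $\epsilon^{-1}[f(\varrho_\epsilon) - f(\varrho)]$ is monotone non-increasing as $\epsilon \downarrow 0$, converges pointwise to $f'(\varrho)(\mu - \varrho)$, and is dominated above by the $L^1$ function $f(\mu) - f(\varrho)$; monotone convergence then yields
$$\lim_{\epsilon \downarrow 0} \frac{\cF(\varrho_\epsilon) - \cF(\varrho)}{\epsilon} = \int_\Om f'(\varrho)(\mu - \varrho)\, \dd x.$$
The classical first-variation formula for the Wasserstein cost (see e.g.\ \cite{OTAM}) gives the companion identity
$$\lim_{\epsilon \downarrow 0} \frac{1}{2\epsilon}\left[W_2^2(\varrho_\epsilon, \nu) - W_2^2(\varrho, \nu)\right] = \int_\Om \vphi\,(\mu - \varrho)\, \dd x,$$
for a suitable Kantorovich potential $\vphi$ in the transport of $\varrho$ onto $\nu$. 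Minimality of $\varrho$ therefore yields
$$\int_\Om (f'(\varrho) + \vphi)(\mu - \varrho)\, \dd x \geq 0 \quad \text{for every admissible } \mu.$$

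Setting $C := M^{-1}\int_\Om (f'(\varrho) + \vphi)\varrho\, \dd x$, the variational inequality reads $\int (f'(\varrho) + \vphi)\,\dd\mu \geq MC$. Testing against $\mu = (M/\sL^d(A))\chi_A$ for any measurable $A \subseteq \{f'(\varrho) + \vphi < C\}$ of positive finite Lebesgue measure would give $\int_A (f'(\varrho) + \vphi)\,\dd x < C \sL^d(A)$, a contradiction; hence $f'(\varrho) + \vphi \geq C$ a.e.\ on $\Om$. If strict inequality held on a positive-measure subset of $\{\varrho > 0\}$, the definition of $C$ would again be contradicted; hence $f'(\varrho) + \vphi = C$ a.e.\ on $\{\varrho > 0\}$, which is \eqref{cond:opt}. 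When $f'(0)$ is finite, monotonicity of $f'$ yields $f'(\varrho) \geq f'(0)$ everywhere; on $\{\varrho > 0\}$ we have $f'(\varrho) = C - \vphi \geq f'(0)$, while on $\{\varrho = 0\}$ we have $f'(0) \geq C - \vphi$, so $f'(\varrho) = \max\{C - \vphi, f'(0)\}$ a.e. The main obstacle I expect is justifying the first variation of $\tfrac12 W_2^2(\cdot, \nu)$ along the non-geodesic linear interpolation $\varrho_\epsilon$: this rests on the semiconcavity of $\rho \mapsto W_2^2(\rho, \nu)$ and continuity of Kantorovich potentials, both standard but nontrivial facts from the optimal transport toolbox.
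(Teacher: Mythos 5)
The paper gives no proof of this lemma: the statement is closed with a citation to \cite{ButSan} and to \cite[Chapter~7]{OTAM}. Your proposal is in effect a self-contained reconstruction of the argument found in those references and follows the same standard scheme: direct method for existence, strict convexity for uniqueness, and a convex-perturbation variational argument for the first-order condition. The derivation of \eqref{cond:opt} from the variational inequality --- the choice of $C$, testing against scaled indicators, and the $\max$-reformulation when $f'(0)$ is finite --- is carried out correctly.

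Two technical points deserve care. Your closing remark invokes ``semiconcavity of $\rho\mapsto W_2^2(\rho,\nu)$'' to justify differentiating along the chord $\varrho_\epsilon=(1-\epsilon)\varrho+\epsilon\mu$; the operative property is in fact \emph{convexity} of $W_2^2(\cdot,\nu)$ along such linear interpolations, which is immediate from Kantorovich duality (the squared cost is a supremum of affine functionals of the measure), and guarantees that the one-sided derivative at $\epsilon=0^+$ exists. Semiconcavity pertains to geodesic interpolations in Wasserstein space, not to the chords used here. Second, the limit
\[
\lim_{\epsilon\downarrow 0}\frac{1}{2\epsilon}\bigl[W_2^2(\varrho_\epsilon,\nu)-W_2^2(\varrho,\nu)\bigr]=\int_\Om\vphi\,(\mu-\varrho)\,\dd x
\]
is not available verbatim for an arbitrary Kantorovich potential $\vphi$ of $(\varrho,\nu)$. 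Duality directly gives only the lower bound $\ge\int_\Om\vphi\,\dd(\mu-\varrho)$ for any fixed $\vphi$; the matching upper bound is obtained by testing with potentials $\vphi_\epsilon$ optimal for $(\varrho_\epsilon,\nu)$ and passing to the limit via the uniform Lipschitz bound (from boundedness of $\Om$) and Ascoli--Arzel\`a, which produces the ``suitable'' potential appearing in the statement. One must also check that this limit potential can be chosen independent of the competitor $\mu$ before introducing the constant $C$; this is where uniqueness of Kantorovich potentials (up to additive constants on $\{\varrho>0\}$) enters. These are precisely the technicalities addressed in the sources cited in the paper; with them in place your argument is sound.
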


\begin{proof} The proof of the previous results can be found in \cite{ButSan} or \cite[Chapter 7]{OTAM}.
%
\end{proof}

It turns out that $(\sP^M(\Om),W_2)$ is a geodesic space and constant speed geodesics (and absolutely continuous curves in general) can be characterized by special solutions of continuity equations. Since this characterization is true for any $M>0,$ we simply set $M=1$ in the theorem below.

\begin{theorem}[see \cite{AmbGigSav, OTAM}]
\begin{enumerate}
\item Let $\Om\subset\R^d$ compact and $(\mu_t)_{t\in[0,T]}$ be an absolutely continuous curve in $(\sP(\Om),W_2).$ Then for a.e. $t\in[0,T]$ there exists a vector field $\v_t\in L^2_{\mu_t}(\Om;\R^d)$ s.t.
\begin{itemize}
\item the continuity equation $\partial_t\mu_t+\nabla\cdot (\v_t\mu_t)=0$ is satisfied in the weak sense;
\item for a.e. $t\in[0,T],$ one has $\|v_t\|_{L^2_{\mu_t}}\le |\mu'|_{W_2}(t),$ where 
$$|\mu'|_{W_2}(t):=\lim_{h\to 0}\frac{W_2(\mu_{t+h},\mu_t)}{|h|}$$ denotes the metric derivative of the curve $[0,T]\ni t\mapsto \mu_t$ w.r.t. $W_2,$ provided the limit exists.
\end{itemize}
\item Conversely, if $(\mu_t)_{t\in[0,T]}$ is a family of measures in $\sP(\Om)$ and for each $t$ one has a vector field $\v_t\in L^2_{\mu_t}(\Om;\R^d)$ s.t. $\int_0^T\|\v_t\|_{L^2_{\mu_t}}\dd t<+\infty$ and $\partial_t\mu_t+\nabla\cdot(\v_t\mu_t)=0$ in the weak sense, then $[0,T]\ni t\mapsto \mu_t$ is an absolutely continuous curve in $(\sP(\Om),W_2)$, with $|\mu'|_{W_2}(t)\le \|v_t\|_{L^2_{\mu_t}}$ for a.e. $t\in[0,T]$ and $W_2(\mu_{t_1},\mu_{t_2})\le \int_{t_1}^{t_2} |\mu'|_{W_2}(t)\dd t.$ If moreover $|\mu'|\in L^2(0,T)$, then we say that $\mu$ belongs to the space $AC^2([0,T];(\sP(\Om), W_2)).$
\item For curves $(\mu_t)_{t\in[0,1]}$ that are geodesics in $(\sP(\Om),W_2)$ one has the equality
$$W_2(\mu_0,\mu_1)=\int_0^1 |\mu'|_{W_2}(t)\dd t=\int_0^1 \|v_t\|_{L^2_{\mu_t}}\dd t.$$
\item For $\mu_0,\mu_1\in\sP^{\ac}(\Om)$, a constant speed geodesic connecting them is a curve $(\mu_t)_{t\in[0,1]}$ such that $W_2(\mu_s,\mu_t)=|t-s|W_2(\mu_0,\mu_1)$ for any $t,s\in[0,1].$ One can compute this constant speed geodesic using McCann's interpolation, i.e. $\mu_t:=\left(T_t\right)_\#\mu_0,$ for all $t\in[0,1]$, where $T_t:=(1-t)\id +tT$ with $T_\#\mu_0=\mu_1$ the optimal transport map between $\mu_0$ and $\mu_1$. Moreover, the velocity field in the continuity equation is given by $\v_t:=(T-\id)\circ (T_t)^{-1}.$
\end{enumerate}
\end{theorem}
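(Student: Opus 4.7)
The plan is to treat the four assertions in turn. For (1) --- the deeper direction --- I would construct the velocity field by combining duality between test functions and Kantorovich cost with Riesz representation. Test against any $\phi\in C^1_c(\R^d)$: the Kantorovich--Rubinstein inequality gives
\[
\left|\int_\Om\phi\,\dd\mu_t-\int_\Om\phi\,\dd\mu_s\right|\le\|\nabla\phi\|_\infty W_1(\mu_t,\mu_s)\le\|\nabla\phi\|_\infty W_2(\mu_t,\mu_s),
\]
so $t\mapsto\int\phi\,\dd\mu_t$ is absolutely continuous with derivative bounded in modulus by $\|\nabla\phi\|_\infty|\mu'|_{W_2}(t)$. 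Outside a null set of times, the resulting linear functional $\phi\mapsto\frac{\dd}{\dd t}\int\phi\,\dd\mu_t$ is bounded on gradients in $L^2_{\mu_t}(\Om;\R^d)$ with norm at most $|\mu'|_{W_2}(t)$, so by Riesz representation applied to the closure of gradients (the \emph{tangent space} at $\mu_t$) there is a unique minimal element $\v_t$ with $\frac{\dd}{\dd t}\int\phi\,\dd\mu_t=\int\nabla\phi\cdot\v_t\,\dd\mu_t$ and $\|\v_t\|_{L^2_{\mu_t}}\le|\mu'|_{W_2}(t)$. This identity is precisely the weak form of $\partial_t\mu_t+\nabla\cdot(\v_t\mu_t)=0$; measurability in $t$ follows by reducing to a countable dense family of test functions and a Borel selection argument.

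For (2), assuming the continuity equation with $\int_0^T\|\v_t\|_{L^2_{\mu_t}}\,\dd t<\infty$, I would argue via regularization and characteristics. Mollify $(\mu,\v\mu)$ in space by a kernel $\eta_\e$ to obtain smooth $(\mu_t^\e,\v_t^\e)$ still satisfying the continuity equation, solve $\dot X_\tau^\e(x)=\v_\tau^\e(X_\tau^\e(x))$ with $X_s^\e=\id$, and observe that $(X_t^\e)_\#\mu_s^\e=\mu_t^\e$. Using $X_t^\e$ as a transport competitor from $\mu_s^\e$ to $\mu_t^\e$ and applying Cauchy--Schwarz yields
\[
W_2^2(\mu_s^\e,\mu_t^\e)\le\int_\Om|X_t^\e(x)-x|^2\,\dd\mu_s^\e(x)\le(t-s)\int_s^t\int_\Om|\v_\tau^\e|^2\,\dd\mu_\tau^\e\,\dd\tau.
\]
Sending $\e\da 0$ and using the lower semicontinuity of the Benamou--Brenier action gives absolute continuity of $t\mapsto\mu_t$, the bound $|\mu'|_{W_2}(t)\le\|\v_t\|_{L^2_{\mu_t}}$ a.e., and hence $W_2(\mu_{t_1},\mu_{t_2})\le\int_{t_1}^{t_2}|\mu'|_{W_2}(t)\,\dd t$ from the metric-space definition of metric derivative.

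For (3), on a geodesic one has $W_2(\mu_s,\mu_t)=|t-s|W_2(\mu_0,\mu_1)$, so $|\mu'|_{W_2}(t)\equiv W_2(\mu_0,\mu_1)$; chaining the inequalities $W_2(\mu_0,\mu_1)\le\int_0^1|\mu'|_{W_2}(t)\,\dd t\le\int_0^1\|\v_t\|_{L^2_{\mu_t}}\,\dd t$ with the reverse bound from (1) forces equality at each stage. For (4), with $\mu_0\ll\sL^d$ Brenier's theorem supplies a unique optimal map $T$ with $T_\#\mu_0=\mu_1$. Setting $T_t:=(1-t)\id+tT$ and $\mu_t:=(T_t)_\#\mu_0$, the inequality
\[
W_2^2(\mu_s,\mu_t)\le\int_\Om|T_t(x)-T_s(x)|^2\,\dd\mu_0=(t-s)^2W_2^2(\mu_0,\mu_1)
\]
combined with the reverse triangle inequality shows $(\mu_t)_{t\in[0,1]}$ is a constant-speed geodesic. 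Since $T$ is cyclically monotone, $T_t$ is injective $\mu_0$-a.e., so differentiating $t\mapsto T_t(x)$ and pushing forward gives $\v_t=(T-\id)\circ(T_t)^{-1}$; a direct verification shows $(\mu_t,\v_t)$ solves the continuity equation.

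The main obstacle will be the rigorous execution of part (1): producing $\v_t$ as a jointly measurable family with $\int_0^T\|\v_t\|_{L^2_{\mu_t}}^2\,\dd t<\infty$, rather than just a.e. existence of an abstract Riesz representative. This requires a careful selection argument on the separable dense class of test functions together with measurability of the projection onto the (time-varying) tangent space; the $L^2$ bound along the curve is then recovered using the lower semicontinuity of the action and Fatou's lemma. This is the content of the Ambrosio--Gigli--Savar\'e representation theorem, and is where the bulk of the analytical work is concentrated.
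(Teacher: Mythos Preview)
The paper does not prove this theorem; it is stated in the appendix purely as a quotation of known results from \cite{AmbGigSav} and \cite{OTAM}, with no argument supplied. There is therefore nothing in the paper's own text to compare your proposal against --- your sketch already goes well beyond what the paper provides.

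That said, part (1) of your outline has a genuine gap as written. The Kantorovich--Rubinstein step yields a bound by $\|\nabla\phi\|_\infty$, not by $\|\nabla\phi\|_{L^2_{\mu_t}}$, so it does not by itself give the $L^2_{\mu_t}$-operator bound on the functional $\phi\mapsto\frac{\dd}{\dd t}\int\phi\,\dd\mu_t$ that you need for Riesz representation with the stated norm inequality $\|\v_t\|_{L^2_{\mu_t}}\le|\mu'|_{W_2}(t)$. The standard fix (as in \cite{AmbGigSav}, Theorem~8.3.1) is to replace Kantorovich--Rubinstein by a direct estimate using an optimal plan $\gamma_{s,t}\in\Pi(\mu_s,\mu_t)$ together with a first-order Taylor expansion,
\[
\int\phi\,\dd\mu_t-\int\phi\,\dd\mu_s=\int\nabla\phi(x)\cdot(y-x)\,\dd\gamma_{s,t}(x,y)+O\bigl(W_2^2(\mu_s,\mu_t)\bigr),
\]
after which Cauchy--Schwarz against $\gamma_{s,t}$ produces the sharper bound $\|\nabla\phi\|_{L^2_{\mu_s}}W_2(\mu_s,\mu_t)+o(W_2)$; dividing by $|t-s|$ and letting $s\to t$ then gives the correct $L^2_{\mu_t}$-operator norm. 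Your treatments of parts (2)--(4) follow standard lines and are essentially correct.
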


Let us introduce the {\it Benamou-Brenier functional} $\cB_2:\sM([0,T]\times \Om)\times \sM^d([0,T]\times\Om)\to\R\cup\{+\infty\}$ defined as 
$$
\cB_2(\mu, \E):=\left\{
\begin{array}{ll}
\ds\int_0^T\int_\Om |\v_t|^2\dd\mu_t(x)\dd t, & \text{if}\ \E=\E_t\otimes\dd t, \mu=\mu_t\otimes\dd t \ \text{and}\  \E_t=\v_t\cdot\mu_t,\\
\ds+\infty, & \text{otherwise}.
\end{array}
\right.
$$

It is well-known (see for instance \cite[Proposition 5.18]{OTAM}) that $\cB_2$ is jointly convex and lower semicontinuous w.r.t. the weak$-\star$ convergence. In particular if $(\mu,\E)$ solves $\partial_t\mu+\nabla\cdot\E=0$ in the weak sense with $\cB_2(\mu,\E)<+\infty,$ implies that $t\mapsto\mu_t$ is a curve in $AC^2([0,T];(\sP(\Om), W_2)).$ 



The following comparison result appears to be well-known but we write it here for completeness.

\begin{lemma}\label{lem:ineq_sum_1}
Let $\mu^1,\nu^1\in\sP^{M_1}(\Om)$ and  $\mu^2,\nu^2\in\sP^{M_2}(\Om).$ Then the following inequality holds true
\begin{equation}\label{ineq:sum_1}
W_2^2(\mu^1+\mu^2,\nu^1+\nu^2)\le W_2^2(\mu^1,\nu^1) + W_2^2(\mu^2,\nu^2).
\end{equation}
\end{lemma}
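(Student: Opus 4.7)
The plan is to use the natural sub-additivity of the Wasserstein cost under superposition of transport plans, which is really a one-line ``glue the plans together'' argument.

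First, I would invoke the Kantorovich formulation of the distance: for each $i=1,2$ there exists an optimal transport plan $\gamma^i \in \Pi^{M_i}(\mu^i,\nu^i)$ realizing
\[
W_2^2(\mu^i,\nu^i) = \int_{\Om\times\Om} |x-y|^2 \, \dd\gamma^i(x,y).
\]
Next, I would simply set $\gamma := \gamma^1 + \gamma^2 \in \sM_+(\Om\times\Om)$, a measure of total mass $M_1+M_2$. By linearity of the push-forward under the projections $\pi^x,\pi^y$, its marginals are
\[
(\pi^x)_\# \gamma = \mu^1 + \mu^2, \qquad (\pi^y)_\# \gamma = \nu^1 + \nu^2,
\]
so $\gamma \in \Pi^{M_1+M_2}(\mu^1+\mu^2,\nu^1+\nu^2)$ is an admissible (in general non-optimal) transport plan between $\mu^1+\mu^2$ and $\nu^1+\nu^2$.

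Finally, I would use $\gamma$ as a competitor in the minimization defining $W_2^2(\mu^1+\mu^2,\nu^1+\nu^2)$ and use linearity of the integral:
\[
W_2^2(\mu^1+\mu^2,\nu^1+\nu^2) \le \int_{\Om\times\Om} |x-y|^2 \, \dd\gamma(x,y) = \sum_{i=1}^2 \int_{\Om\times\Om} |x-y|^2 \, \dd\gamma^i(x,y) = W_2^2(\mu^1,\nu^1) + W_2^2(\mu^2,\nu^2),
\]
which is the desired inequality. There is no real obstacle here: the whole proof rests on the observation that admissible plans for the summed measures can be obtained by summing admissible plans for the individual pairs, and that the transport cost is linear in the plan. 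The optimality of $\gamma^i$ is used only to identify the right-hand side with the sum of squared Wasserstein distances; any admissible pair of plans would give the same bound with its cost on the right.
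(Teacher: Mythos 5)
Your proof is correct and coincides with the paper's argument: both take optimal plans $\gamma^1,\gamma^2$ for the individual pairs, observe that $\gamma^1+\gamma^2$ is an admissible plan between the summed measures, and conclude by suboptimality and linearity of the cost. (Incidentally, you state the marginals of the summed plan correctly, whereas the paper's proof contains a small typo there.)
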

\begin{remark}
Note that with the abuse of notation, $W_2$ on the l.h.s. of \eqref{ineq:sum_1} denotes the $2-$Wasserstein distance on $\sP^{M_1+M_2}(\Om),$ while on the r.h.s. $W_2$ denotes the corresponding distances on $\sP^{M_1}(\Om)$ and $\sP^{M_2}(\Om)$ respectively. 
\end{remark}

\begin{proof}[Proof of Lemma \ref{lem:ineq_sum_1}]
The quantity on the l.h.s. of \eqref{ineq:sum_1} is realized by an optimal plan $\gamma\in\Pi^{M_1+M_2}(\mu^1+\mu^2,\nu^1+\nu^2)$ i.e. 
$$W_2^2(\mu^1+\mu^2,\nu^1+\nu^2)=\int_{\Om\times\Om}|x-y|^2\dd\gamma.$$ 
Similarly the quantities on the r.h.s. can be written with the help of some optimal plans $\gamma^i\in\Pi^{M_i}(\mu^i,\nu^i),$ $i=1,2,$ i.e.
$$W_2^2(\mu^i,\nu^i)=\int_{\Om\times\Om}|x-y|^2\dd\gamma^i.$$
Now set $\tilde\gamma:=\gamma^1+\gamma^2$. Clearly since $(\pi^x)_\#\tilde\gamma=\mu^1+\nu^1$ and $(\pi^y)_\#\tilde\gamma=\mu^2+\nu^2$ one has $\tilde\gamma\in\Pi^{M_1+M_2}(\mu^1+\mu^2,\nu^1+\nu^2)$. Hence
\begin{align*}
W_2^2(\mu^1+\mu^2,\nu^1+\nu^2)&=\int_{\Om\times\Om}|x-y|^2\dd\gamma\le\int_{\Om\times\Om}|x-y|^2\dd\tilde\gamma\\
&=\int_{\Om\times\Om}|x-y|^2\dd\gamma^1+\int_{\Om\times\Om}|x-y|^2\dd\gamma^2\\
&\le W_2^2(\mu^1,\nu^1)+W_2^2(\mu^2,\nu^2).
\end{align*}
Therefore, inequality \eqref{ineq:sum_1} follows.
\end{proof}

\section{A refined Aubin-Lions lemma}\label{sec:appendix_aubin-lions}

In \cite{RosSav} the authors present the following version of the classical Aubin-Lions lemma (see \cite{Aub}):

\begin{theorem}{\cite[Theorem 2]{RosSav}}\label{thm:aubin_refined}
Let $B$ be a Banach space and $\cU$ be a family of measurable $B$-valued function. Let us suppose that there exist a normal coercive integrand $\fF:(0,T)\times B\to [0,+\infty]$, meaning that 
\begin{itemize}
\item[(1)] $\fF$ is $\sB(0,T)\otimes\sB(B)$-measurable, where $\sB(0,T)$ and $\sB(B)$ denote the $\s$-algebgras of the Lebesgue measurable subsets of $(0,T)$ and of the Borel subsets of $B$ respectively;
\item[(2)] the maps $v\mapsto \fF_t(v):=\fF(t,v)$ are l.s.c. for a.e. $t\in(0,T)$;
\item[(3)] $\{v\in B:\fF_t(v)\le c\}$ are compact for any $c\ge 0$ and for a.e. $t\in(0,T),$
\end{itemize}
and a l.s.c. map $g:B\times B\to [0,+\infty]$ with the property
$$\left[u,v\in D(\fF_t),\ g(u,v)=0\right]\Rightarrow u=w,\ \text{for }\ae\ t\in(0,T).$$
If $$\sup_{u\in\cU}\int_0^T\fF(t,u(t))\dd t<+\infty\ \ \text{and}\ \ \lim_{h\da 0}\sup_{u\in\cU}\int_0^{T-h}g(u(t+h),u(t))\dd t=0,$$
then $\cU$ is relatively compact in $\sM(0,T; B).$
\end{theorem}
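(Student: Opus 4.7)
The natural approach is via Young measures on $(0,T)\times B$, which is how Rossi and Savar\'e handle this statement. To each $u\in\cU$ associate the Young measure $\nu^u$ on $(0,T)\times B$ whose disintegration against the Lebesgue measure on $(0,T)$ is $\nu^u_t=\delta_{u(t)}$ for a.e.\ $t$. The plan is then (a) to obtain relative compactness of $\{\nu^u\}_{u\in\cU}$ in the narrow topology of measures on $(0,T)\times B$, and (b) to identify every subsequential limit as again being of Dirac type, $\nu_t=\delta_{u(t)}$ for some measurable $u:(0,T)\to B$. Narrow convergence of Dirac Young measures to a Dirac Young measure is equivalent to convergence in measure of the generating functions, so (a)+(b) deliver relative compactness of $\cU$ in $\sM(0,T;B)$.

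For step (a), tightness is driven by the coercivity. Chebyshev's inequality applied to the uniform bound on $\int_0^T\fF(t,u(t))\,\dd t$ gives
\[
\sup_{u\in\cU}\int_0^T\nu^u_t\bigl(\{v\in B:\fF_t(v)>R\}\bigr)\dd t\;\le\;C/R,
\]
while the sublevel sets $\{\fF_t\le R\}$ are compact in $B$ for a.e.\ $t$ by hypothesis (3). A measurable-selection argument then produces, for each $\e>0$, a Borel set $K_\e\subset (0,T)\times B$ whose $t$-sections are compact in $B$ and with $\sup_{u\in\cU}\nu^u\bigl((0,T)\times B\setminus K_\e\bigr)<\e$. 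A Prokhorov-type theorem for Young measures yields a subsequence $\nu^{u_n}\weaklys\nu$ narrowly. The same bound, combined with lower semicontinuity of $\fF$, ensures $\int\fF\,\dd\nu<+\infty$, hence $\nu_t$ is concentrated in $D(\fF_t)$ for a.e.\ $t$.

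For step (b), the $g$-translation condition is used as follows. For fixed $h>0$,
\[
\int_0^{T-h}\!\int_{B\times B}g(w,v)\,\dd(\nu^{u_n}_{t+h}\otimes\nu^{u_n}_t)(w,v)\,\dd t=\int_0^{T-h}g(u_n(t+h),u_n(t))\dd t,
\]
and the right-hand side is $o(1)$ as $h\da0$ uniformly in $n$ by assumption. Lower semicontinuity of $g$ together with the narrow convergence of the product Young measures $\nu^{u_n}_{t+h}\otimes\nu^{u_n}_t$ (which follows from narrow convergence of $\nu^{u_n}$ and the measurable dependence of the disintegrations on $t$) transfers the same $o(1)$ bound to the limit $\nu$. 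A Lebesgue-differentiation argument in $h$ then forces
\[
\int_{B\times B}g(w,v)\,\dd(\nu_t\otimes\nu_t)(w,v)=0\qquad\text{for a.e.\ }t\in(0,T).
\]
Since $\nu_t$ is supported in $D(\fF_t)$, the implication on $g$ in the statement forces $\nu_t\otimes\nu_t$ to be concentrated on the diagonal, hence $\nu_t=\delta_{u(t)}$ for some measurable $u$.

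The main obstacle lies in step (b): one has to justify both the narrow limit in the double integral against a merely lower semicontinuous $g$ and the interchange of $\liminf_n$ with $\lim_{h\da0}$. This requires showing that the time-shift operation $\nu^u\mapsto\nu^u_{(\cdot+h)}\otimes\nu^u_{(\cdot)}$ on $(0,T-h)\times B\times B$ is continuous with respect to narrow convergence on the original Young measures, which in turn relies on a careful measurable disintegration argument. This is precisely the point where the integral time-translation hypothesis on $g$ replaces the classical equiintegrability of $\partial_tu_n$ in the original Aubin--Lions lemma, and implementing it rigorously will be the technical heart of the proof.
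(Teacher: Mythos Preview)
The paper does not give its own proof of this statement: it is quoted verbatim from \cite[Theorem 2]{RosSav} in Appendix~\ref{sec:appendix_aubin-lions} and used as a black box in Proposition~\ref{prop:strong_comp}. So there is nothing in the paper to compare your argument against.

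That said, your outline is the correct one and matches the strategy of Rossi--Savar\'e: associate to each $u$ the Dirac Young measure $\nu^u_t=\delta_{u(t)}$, extract a narrow limit $\nu$ from tightness driven by the coercive integrand $\fF$, and then use the integral equicontinuity hypothesis on $g$ to force $\nu_t$ to be a Dirac mass for a.e.\ $t$. You have also correctly located the delicate point: passing the lower semicontinuous functional $g$ through the narrow limit of the \emph{joint} Young measures $t\mapsto \delta_{(u_n(t+h),u_n(t))}$ on $B\times B$, and then exchanging $\liminf_n$ with $\lim_{h\da 0}$. One small caution on your phrasing: what converges narrowly is the Young measure on $(0,T-h)\times B\times B$ generated by the pair $(u_n(\cdot+h),u_n(\cdot))$, whose $t$-disintegration is $\delta_{u_n(t+h)}\otimes\delta_{u_n(t)}$; this is not quite the same object as ``the product of the disintegrations of $\nu^{u_n}$'', and the passage to the limit uses lower semicontinuity of the integral functional $\int g\,\dd\mu$ under narrow convergence rather than any product structure. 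With that adjustment your sketch is sound.
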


%
%

\vspace{0.4cm}

{\it Conflict of Interest --} The authors declare that they have no conflict of interest.

\end{document}